\documentclass[12pt]{article}

\usepackage{amsfonts}
\usepackage{amsmath}
\usepackage{color}
\usepackage{tikz}

\oddsidemargin0pt
\evensidemargin0pt
\textwidth17cm
\textheight22cm

\topmargin-10mm

\newtheorem{definition}{Definition}
\newtheorem{theorem}{Theorem}
\newtheorem{proposition}{Proposition}
\newtheorem{lemma}{Lemma}

\newtheorem{cor}{Corollary}

\newenvironment{proof}{\noindent{\it Proof. }\rm}
{\unskip\nobreak\hfil\penalty50\hskip1em\hbox{}
\nobreak\hfill\qed\par\smallskip}
\def\qed{\vrule height1ex width1ex depth0pt}

\begin{document}
\title{Maximum modulus principle for ``holomorphic functions'' on the quantum matrix ball}

\author{Olga Bershtein\\
{\footnotesize Department of Mathematics}\\
{\footnotesize University of Copenhagen}\\
{\footnotesize Universitetsparken 5, 2100 K{\o}benhavn {\O}}\\
{\footnotesize email: olya.bersht@gmail.com}\\
 Olof Giselsson and Lyudmila Turowska\\
{\footnotesize Department of Mathematical Sciences,}\\
{\footnotesize Chalmers University of Technology and  the University of Gothenburg,}\\
{\footnotesize Gothenburg SE-412 96, Sweden}\\
{\footnotesize email: olofgi@chalmers.se,
email: turowska@chalmers.se}}

\date{}
\maketitle{}
\abstract{We describe the Shilov boundary ideal for a q-analog  of the algebra of holomorphic functions on the unit ball in the space of $n\times n$ matrices and show that its $C^*$-envelope  is isomorphic to the $C^*$-algebra of continuous functions on the quantum unitary group $U_q(n)$. }
\section{Introduction}
In 80s S.L.~Woronowicz introduced the notion of a compact quantum group within the framework of $C^*$-algebras. It was clear from the beginning that one can consider the appearing theory as a counterpart to the theory of compact groups and their representations. For instance, the key Peter-Weyl theorem was proved for the case of compact quantum groups.

The concept of a non-compact quantum group was much less clear, the corresponding theory was in need of the key statements (for instance, it was not clear whether there exists a Haar measure on a noncompact quantum group). At that point L.~Vaksman and his collaborators suggested another approach to representation theory of noncompact quantum groups. The idea was to construct quantum analogs for homogeneous spaces $X$ of noncompact Lie groups $G_0$ and then develop representation theory in connection with these quantum spaces.

The object they started with were Hermitian symmetric spaces of non-compact type. Let us briefly recall some facts about them and representation theory in connection. Under the Harish-Chandra embedding, an irreducible Hermitian symmetric space $X$  of non-compact type allows a realization as a unit ball $\mathbb D$ in a certain normed vector space. The group of biholomorphic automorphisms $G_0=Aut(\mathbb D)$ is a non-compact real Lie group, and $X$ is a homogeneous space of $G_0$. One of the classical approaches to Harish-Chandra modules for non-compact real groups is to derive them from a $G_0$-orbits in certain flag variety $X_c$ (see, e.g. \cite{Wolf}). The flag variety $X_c$ is namely the dual of $X$ (in the sense of symmetric space), which is a Hermitian symmetric space of compact type. An important fact is that there is a unique closed $G_0$-orbit in $X_c$. This orbit namely corresponds to the Shilov boundary $S(\mathbb D)$ of the bounded symmetric domain $\mathbb D$. In particular, if we consider the unit matrix ball $X=\mathbb D_n=\{Z \in \mathrm{Mat}_n| I-ZZ^*>0\}$, its group of symmetries $G_0$ is the group of pseudounitary matrices $SU(n,n)$, the dual Hermitian symmetric space $X_c$ is the Grassmannian $Gr_n(\mathbb C^{2n})$ and the Shilov boundary is the group of unitary $n \times n$-matrices $S(\mathbb D_n)=U_n$. The series of Harish-Chandra modules related to the Shilov boundary are called the principal degenerate series.

Quantum bounded symmetric domains were introduced in \cite{SV2} and studied in the series of papers (see \cite{vaksman-book} and references therein). The authors associated to a bounded symmetric domain $\mathbb D$ in a complex vector space $V$ non-commutative algebras $\mathbb C[V]_q$ and $\mathrm{Pol}(V)_q$  which they treated as the algebras of holomorphic resp. arbitrary polynomials on the quantum vector space $V$; the algebra of continuous functions on the quantum domain $\mathbb D$ are derived then from $\mathrm{Pol}(V)_q$ via some completion procedure.   The main obstacle for developing representation theory for quantum $G_0$ in this setting was that there were no quantum analogs for $G_0$-orbits on flag varieties. The Shilov boundary was a happy exception because it itself is a compact symmetric space. Although the construction of quantum Shilov boundary for an arbitrary quantum bounded symmetric domain is rather nontrivial, in some simpler cases the object was already known. In particular, for the quantum analog of the unit matrix ball $\mathbb D_n$, the corresponding quantum $U_n$, more precisely a $q$-analog $\mathbb C[U_n]_q$ of the algebra of functions on $U_n$, was well studied. In \cite{vaksman_shilov1}, using a purely algebraic approach,  Vaksman defined a $q$-analog of polynomials $\mathrm{Pol}(S(\mathbb D_n))_q$ on the Shilov boundary $U_n=S(\mathbb D_n)$ as a $*$-algebra isomorphic to $\mathbb C[U_n]_q$  and produced a $*$-homomorphism
$$j_q:\mathrm{Pol}({\mathrm{Mat}}_n)_q\to \mathrm{Pol}(S(\mathbb D_n))_q;$$
the latter can be understood as a $q$-analog  of the  operator that restricts the polynomials on $\overline{\mathbb D_n}$  to its Shilov boundary; we refer to the kernel $J_n:=\ker j_q$ as algebraic Shilov boundary ideal. The principal degenerate series of the quantum $SU(n,n)$ related to the Shilov boundary were studied in \cite{Ber}. In the following we want to clarify the connection of the constructed quantum Shilov boundary with its topological (non-commutative) counterpart.

Recall that the classical topological notion of Shilov boundary is used in the study of uniform  algebras $\mathcal A$, that is, closed subalgebras of $C(X)$ of all continuous functions on a compact $X$ that separate points and contain constants; this is the smallest subset $S\subset X$ where the functions in $\mathcal A$ attain their maximum; the latter means that the restriction of the operator $j: C(X)\to C(S)$, $f\mapsto f|_S$, to the subalgebra $\mathcal A$ is an isometry.  In complex analysis the uniform algebras are typically the algebra ${\mathcal A}(X^0)$  of functions holomorphic in the interior $X^0$ of  a compact domain $X$ in a complex vector space; for instance ${\mathcal A}(\mathbb D_n)\subset C(\overline{\mathbb D}_n)$ for the unit matrix ball $\mathbb D_n$.

In the late 1960s W.Arveson initiated in his influential paper \cite{Ar} the study of non-commutative uniform algebras as (non-selfadjoint) subalgebras $\mathcal A$ of $C^*$-algebras $\mathcal B$  and introduced a non-commutative analog of the Shilov boundary; the latter is the largest ideal $J\subset \mathcal B$ such that the quotient map $j:{\mathcal B}\to{\mathcal B}/J$ is a complete isometry when restricted to $\mathcal A$.
A question, which was raised by Vaksman, is  whether the ``algebraically" constructed Shilov boundary for quantum unit matrix ball coincides with the Arveson one. We give an affirmative answer to this question for general value of $n$. The cases $n=1$ and $n=2$ were treated in \cite{vaksman-boundary} and \cite{pro_tur_shilov}, respectively. More precisely, considering a completion $C_F(\overline{\mathbb D}_n)_q$ of $\mathrm{Pol}(\mathrm{Mat}_n)_q$, a $q$-analog of continuous functions in $\overline{\mathbb D}_n$, we prove that the closure of the algebraic Shilov boundary ideal $J_n$ is the Shilov boundary ideal in the sense of Arveson for the subalgebra $A(\mathbb D_n)_q$ which is the closure of the holomorphic polynomials $\mathbb C[\mathrm{Mat}_n]_q$ in the quantum space of $n\times n$ matrices. The pair $(A({\mathbb D}_n)_q, C_F(\overline{\mathbb D}_n)_q)$ is a $q$-analog of the pair $(A(\mathbb D_n), C(\overline{\mathbb D}_n))$.
Note that the quotient $C_F(\overline{\mathbb D}_n)_q/\overline{J}_n$ provides a realization of the $C^*$-envelope of $A({\mathbb D}_n)_q$. We refer the reader to \cite{arveson_notes, dritschel_mccullough, paulsen} for the background and recent development concerning Shilov boundary and $C^*$-envelope.

The paper is organized as follows. After finishing this section by introducing some general notational conventions, in section 2.1-2.3 we introduce and collect some properties of the main objects of our study, the algebras $\mathbb C[\mathrm{Mat}_n]_q$,  $\mathrm{Pol}(\mathrm{Mat}_n)_q$ and $\mathbb C[SU_n]_q$. The $*$-algebra $\mathrm{Pol}(\mathrm{Mat}_n)_q$ possesses a $\mathbb C[SU_n\times SU_n]_q$-comodule structure; it  plays a crucial role in our consideration and is  presented in Lemma \ref{coaction}. In section 2.4 we discuss $*$-representations of $\mathrm{Pol}(\mathrm{Mat}_n)_q$. In particular, we propose a new construction of the Fock $*$-representation $\pi_{F,n}$ of the $*$-algebra; the representation is known to be the only faithful irreducible $*$-representation by bounded operators (see \cite{ssv}). The construction allows to derive a number of consequences about other $*$-representations of $\mathrm{Pol}(\mathrm{Mat}_n)_q$ which are discussed in sections 2 and 3.  The $C^*$-algebra $C_F(\overline{\mathbb D}_n)_q$ is defined to be  the completion of $\mathrm{Pol}(\mathrm{Mat}_n)_q$ with respect to the norm $\|f\|=\|\pi_{F,n}(f)\|$, $f\in\mathrm{Pol}(\mathrm{Mat}_n)_q$, and shown to be a $C^*$-subalgebra of the (spatial) tensor product $C^*(S)^{\otimes n^2}$ of $n^2$ copies of the $C^*$-algebra generated by the unilateral shift $S$ on $\ell^2(\mathbb Z_+)$.
In section 3 we prove the main results of the paper: Theorems \ref{boundary_ideal} and \ref{shilov_boundary}; they state  that $\overline{J}_n$ is the Shilov boundary ideal of $C_F(\overline{\mathbb D}_n)_q$ relative the subalgebra $A({\mathbb D}_n)_q$. As a corollary we obtain that the $C^*$-envelope of $A(\mathbb D_n)_q$ is isomorphic to the $C^*$-algebra of continuous functions on the quantum unitary group $U_q(n)$, i.e. the $C^*$-enveloping algebra of $\mathbb C[U_n]_q$.  Our approach is based on dilation-theoretic arguments. Namely, we construct a $*$-representation of $\mathrm{Pol}(\mathrm{Mat}_n)_q$ which annihilates the ideal $J_n$ and  compresses to the Fock representation when both are  restricted to the holomorphic part $\mathbb C[\mathrm{Mat}_n]_q$. We refer the reader to \cite{dritschel_mccullough} and \cite{arveson_notes} for the dilation ideas to the Shilov boundary ideal and $C^*$-envelopes.

\medskip

We finish this section by recalling standard notation and notions that are used in  the paper.
For a Hilbert space $H$ we let $B(H)$ denote the space of all bounded linear operators on $H$. We shall write $H\otimes K$ for the Hilbertian tensor product of two Hilbert spaces $H$ and $K$ and $H^{\otimes n}$ for the tensor product of $n$-copies of $H$. For an index set $I$, $\{e_i:i\in I\}$ will always stand for  the standard orthonormal basis in the Hilbert space $\ell^2(I)$.
If $A\in B(H)$, $B\in B(K)$, then $A\otimes B$ stands for the operator in $B(H\otimes K)$ given by $A\otimes B(\xi\otimes\eta)=A\xi\otimes B\eta$, $\xi\in H$, $\eta\in K$. If  $W\subset H$ is a closed subspace and $A\in B(H)$ leaves $W$ invariant then  we write $A|_W$ for the restriction of $A$ to $W$.  If ${\mathcal A}$, $\mathcal B$ are $*$-algebras we write, as usual, $\mathcal A\otimes\mathcal B$ for the algebraic tensor product of the algebras; if $\mathcal A$ and $\mathcal B$ are $C^*$-algebras  by ${\mathcal A}\otimes_{\rm min}{\mathcal B}$ we denote their minimal $C^*$-tensor product. Even though we shall always have one of the $C^*$-algebras $\mathcal A$ and $\mathcal B$ nuclear and hence all $C^*$-norms on $\mathcal A\otimes\mathcal B$ will be the same, we shall keep the notation $\mathcal A\otimes_{\rm min} \mathcal B$ in order to distinguish the latter from the algebraic tensor product of $\mathcal A$ and $\mathcal B$. We write $\mathcal A^{\otimes n}$  for the minimal tensor product of $n$ copies of $\mathcal A$.

For a set $V$, we denote, as usual, by $M_n(V)$ the set of all $n\times n$ matrices with
entries in $V$. It is clearly a vector space if $V$ is such. For a map $\phi:V\to W$ between vector spaces $V$ and $W$ we let $\phi^{(n)}((a_{i,j})_{i,j})=
(\phi(a_{i,j}))_{i,j}$ for each $(a_{i,j})_{i,j}\in M_n(V)$.

If $\mathcal A$ is a  $*$-algebra, any $*$-homomorphism $\pi:\mathcal A\to B(H)$ is called a bounded $*$-representation. As in this paper we  mostly deal with bounded $*$-representation we shall often omit the word bounded and write simply a $*$-representation.
The set of all bounded $*$-representations of $\mathcal A$ will be denoted by $\text{Rep}(\mathcal A)$.

Let $\pi_i:\mathcal A_i\to B(H_i)$ be a $*$-representation of ${\mathcal A}_i$, $i=1,2$. We  write $\pi_1 \otimes\pi_2:\mathcal A_1\otimes\mathcal A_2\to B(H_1\otimes H_2)$ for the $*$-representation given by
$\pi_1\otimes\pi_2(a_1\otimes a_2)=\pi_1(a_1)\otimes\pi_2(a_2)$, $a_1\in\mathcal A_1$, $a_2\in \mathcal A_2$. It should not be confused with the tensor product $\pi_1\otimes\pi_2$ of $*$-representations of a Hopf $*$-algebra $\mathcal A$ (used in section \ref{suqn}), which is actually the $*$-homomorphism $(\pi_1\otimes\pi_2)\circ\Delta:\mathcal A\to B(H_1\otimes H_1)$, where $\Delta:{\mathcal A}\to\mathcal A\otimes\mathcal A$ is the co-product on $\mathcal A$.

\section{The $*$-algebras $\mathrm{Pol}(\mathrm{Mat}_n)_q$ and $\mathbb C[SU_n]_q$ and their representations}

\subsection{The $*$-algebra $\mathrm{Pol}(\mathrm{Mat}_n)_q$}\label{algebra}

In what follows $\mathbb{C}$ is a ground field and $q \in(0,1)$. We assume
that all the algebras under consideration are unital. Consider the well known algebra
$\mathbb{C}[\mathrm{Mat}_n]_q$ defined by its generators $z_a^\alpha$,
$\alpha,a=1,\dots,n$, and the commutation relations
\begin{flalign}
& z_a^\alpha z_b^\beta-qz_b^\beta z_a^\alpha=0, & a=b \quad \& \quad
\alpha<\beta,& \quad \text{or}\quad a<b \quad \& \quad \alpha=\beta,
\label{zaa1}
\\ & z_a^\alpha z_b^\beta-z_b^\beta z_a^\alpha=0,& \alpha<\beta \quad
\&\quad a>b,& \label{zaa2}
\\ & z_a^\alpha z_b^\beta-z_b^\beta z_a^\alpha-(q-q^{-1})z_a^\beta
z_b^\alpha=0,& \alpha<\beta \quad \& \quad a<b. & \label{zaa3}
\end{flalign}
This algebra is a quantum analogue of the polynomial algebra
$\mathbb{C}[\mathrm{Mat}_n]$ on the space of $n\times n$ matrices. It follows from the Bergman diamond lemma (see \cite{bergman}) that the lexicographically ordered monomials $(z_n^n)^{\gamma_n^n}(z_{n-1}^{n-1})^{\gamma_{n-1}^{n-1}}\ldots(z_1^n)^{\gamma_1^n}\ldots(z_1^1)^{\gamma_1^1}$, $\gamma_a^\alpha\in \mathbb Z_+$, $\alpha,a=1,\ldots, n$, form a basis of the vector space ${\mathbb C}[\mathrm{Mat}_n]_q$. Hence ${\mathbb C}[\mathrm{Mat}_n]_q$ admits a natural grading given by $\text{deg} z_a^\alpha=1$.

In a similar way, introduce the algebra
$\mathbb{C}[\overline{\mathrm{Mat}}_n]_q$, defined by its generators
$(z_a^\alpha)^*$, $\alpha,a=1,\ldots,n$, and the relations
\begin{flalign}
& (z_b^\beta)^*(z_a^\alpha)^* -q(z_a^\alpha)^*(z_b^\beta)^*=0, \quad a=b
\quad \& \quad \alpha<\beta, \qquad \text{or} & a<b \quad \& \quad
\alpha=\beta, \label{zaa1*}
\\ & (z_b^\beta)^*(z_a^\alpha)^*-(z_a^\alpha)^*(z_b^\beta)^*=0,&
\alpha<\beta \quad\&\quad a>b,& \label{zaa2*}
\\ & (z_b^\beta)^*(z_a^\alpha)^*-(z_a^\alpha)^*(z_b^\beta)^*-
(q-q^{-1})(z_b^\alpha)^*(z_a^\beta)^*=0,& \alpha<\beta \quad \& \quad a<b. &
\label{zaa3*}
\end{flalign}
A grading in $\mathbb{C}[\overline{\mathrm{Mat}}_n]_q$ is given by $\text{deg}(z_a^\alpha)^*=-1$.

Finally, consider the algebra $\mathrm{Pol}(\mathrm{Mat}_n)_q$ whose
generators are $z_a^\alpha$, $(z_a^\alpha)^*$, $\alpha,a=1,\dots,n$, and the list of relations is formed by \eqref{zaa1} --
\eqref{zaa3*} and
\begin{flalign}
&(z_b^\beta)^*z_a^\alpha=q^2 \cdot \sum_{a',b'=1}^n
\sum_{\alpha',\beta'=1}^n R_{ba}^{b'a'}R_{\beta \alpha}^{\beta'\alpha'}\cdot
z_{a'}^{\alpha'}(z_{b'}^{\beta'})^*+(1-q^2)\delta_{ab}\delta^{\alpha \beta},
& \label{zaa4}
\end{flalign}
with $\delta_{ab}$, $\delta^{\alpha \beta}$ being the Kronecker symbols, and
$$
R_{ij}^{kl}=
\begin{cases}
q^{-1},& i \ne j \quad \& \quad i=k \quad \& \quad j=l
\\ 1,& i=j=k=l
\\ -(q^{-2}-1), & i=j \quad \& \quad k=l \quad \& \quad l>j
\\ 0,& \text{otherwise}.
\end{cases}
$$
The involution in $\mathrm{Pol}(\mathrm{Mat}_n)_q$ is introduced in the
obvious way: $*:z_a^\alpha \mapsto(z_a^\alpha)^*$.

\medskip

 Recall a standard notation for the $q$-determinant of the matrix $\mathbf{z}=(z_a^\alpha)_{\alpha,a=1}^n$:
 \begin{equation*}
\det \nolimits_q\mathbf{z}=\sum_{s \in
S_n}(-q)^{l(s)}z_1^{s(1)}z_2^{s(2)}\ldots z_n^{s(n)},
\end{equation*}
with $l(s)=\mathrm{card}\{(i,j)|\;i<j \quad \&\quad s(i)>s(j) \}$. It is well-known that $\det_q\mathbf{z}$ is in the center of $\mathbb C[\mathrm{Mat}_n]_q$.

In order to introduce the Shilov boundary for the quantum matrix ball we will need the algebra of regular functions on the quantum $GL_n$, denoted by $\mathbb{C}[GL_n]_q$ (see \cite{KlSh}). It is  the localization of $\mathbb{C}[\mathrm{Mat}_n]_q$ with respect to the multiplicative system $(\det_q\mathbf{z})^{\mathbb N}$.
The algebra $\mathbb{C}[GL_n]_q$ possesses a unique involution $*$ given by
\begin{equation}\label{star_pol}
(z_a^\alpha)^*=(-q)^{a+\alpha-2n}(\det\nolimits_q\mathbf{z})^{-1}\det\nolimits_q\mathbf{z}_a^\alpha
\end{equation}
with $\mathbf{z}_a^\alpha$ being the matrix derived from $\mathbf{z}$ by deleting the $\alpha$-th row and $a$-th column. Furthermore,
\begin{equation}\label{detqz}
\det\nolimits_q\mathbf{z}(\det\nolimits_q\mathbf{z})^*=(\det\nolimits_q\mathbf{z})^*\det\nolimits_q\mathbf{z}=q^{-n(n-1)}
\end{equation}
(see \cite[Lemma 2.1]{vaksman_shilov1}).

\begin{theorem}[\cite{vaksman_shilov1}] \label{vaksman_hom} There exists a unique $*$-homomorphism  $$\psi: \mathrm{Pol}(\mathrm{Mat}_n)_q\to (\mathbb{C}[GL_n]_q,\ast)$$ such that $\psi:z_a^\alpha\mapsto z_a^\alpha$, $\alpha,a=1,\ldots,n$.
\end{theorem}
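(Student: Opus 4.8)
The plan is to treat uniqueness and existence separately. Uniqueness is immediate: $\mathrm{Pol}(\mathrm{Mat}_n)_q$ is generated as a $*$-algebra by the $z_a^\alpha$, so any $*$-homomorphism $\psi$ with $\psi(z_a^\alpha)=z_a^\alpha$ is forced to satisfy $\psi((z_a^\alpha)^*)=(z_a^\alpha)^*$, where the right-hand side denotes the element of $\mathbb{C}[GL_n]_q$ prescribed by \eqref{star_pol}; hence $\psi$ is determined on all of $\mathrm{Pol}(\mathrm{Mat}_n)_q$. For existence, since $\mathrm{Pol}(\mathrm{Mat}_n)_q$ is defined by generators and relations, it suffices to exhibit in $\mathbb{C}[GL_n]_q$ elements satisfying \eqref{zaa1}--\eqref{zaa4}: take the generators $z_a^\alpha$ themselves together with $w_a^\alpha:=(-q)^{a+\alpha-2n}(\det\nolimits_q\mathbf{z})^{-1}\det\nolimits_q\mathbf{z}_a^\alpha$, which by \eqref{star_pol} equals $(z_a^\alpha)^*$. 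Once the relations are checked the resulting algebra homomorphism is automatically a $*$-homomorphism, because both involutions reverse products and the assignment respects $*$ on generators.

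The relations fall into three groups. Relations \eqref{zaa1}--\eqref{zaa3} involve only the $z_a^\alpha$ and hold in $\mathbb{C}[\mathrm{Mat}_n]_q$ by definition; they persist in the localization $\mathbb{C}[GL_n]_q$ because $\det\nolimits_q\mathbf{z}$ is central (already recalled) and a non-zero-divisor --- $\mathbb{C}[\mathrm{Mat}_n]_q$ being a domain, e.g. an iterated Ore extension --- so the localization map is injective. Relations \eqref{zaa1*}--\eqref{zaa3*} involve only the $(z_a^\alpha)^*$; one checks term by term that applying the involution $*$ of $\mathbb{C}[GL_n]_q$ to \eqref{zaa1}, \eqref{zaa2}, \eqref{zaa3} produces exactly \eqref{zaa1*}, \eqref{zaa2*}, \eqref{zaa3*} for the $w_a^\alpha=(z_a^\alpha)^*$ (here one uses that $*$ is conjugate-linear and anti-multiplicative and that the structure constants $q,q^{-1}$ are real). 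Thus these relations come for free once one knows, as recalled before the theorem, that \eqref{star_pol} does define an involution on $\mathbb{C}[GL_n]_q$.

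The substantial point is the cross-relation \eqref{zaa4}. As $(\det\nolimits_q\mathbf{z})^{-1}$ is central, multiplying \eqref{zaa4} on the left by $\det\nolimits_q\mathbf{z}$ clears all denominators and turns it into an identity inside $\mathbb{C}[\mathrm{Mat}_n]_q$:
\[
(-q)^{b+\beta-2n}(\det\nolimits_q\mathbf{z}_b^\beta)\,z_a^\alpha = q^2\sum_{a',b',\alpha',\beta'} R_{ba}^{b'a'}R_{\beta\alpha}^{\beta'\alpha'}(-q)^{b'+\beta'-2n} z_{a'}^{\alpha'}(\det\nolimits_q\mathbf{z}_{b'}^{\beta'}) + (1-q^2)\delta_{ab}\delta^{\alpha\beta}\det\nolimits_q\mathbf{z},
\]
a $q$-analogue of the classical relation between a matrix and its adjugate. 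I would prove this from the quantum Laplace (cofactor) expansions of $\det\nolimits_q\mathbf{z}$ together with the known commutation relations between a generator $z_a^\alpha$ and a quantum minor of $\mathbf{z}$; equivalently, in the Faddeev--Reshetikhin--Takhtajan picture the relations \eqref{zaa1}--\eqref{zaa3} are the $RTT$-relations $R_{12}\mathbf{z}_1\mathbf{z}_2=\mathbf{z}_2\mathbf{z}_1R_{12}$, the matrix $(w_a^\alpha)$ is a normalization of the quantum adjugate $\det\nolimits_q\mathbf{z}\cdot\mathbf{z}^{-1}$ (the antipode of $\mathbb{C}[GL_n]_q$ on $\mathbf{z}$, up to scalars), and \eqref{zaa4} is the standard mixed relation between $\mathbf{z}$ and $\mathbf{z}^{-1}$ in a coquasitriangular Hopf algebra, reorganized by means of \eqref{detqz}; the double sum $\sum R_{ba}^{b'a'}R_{\beta\alpha}^{\beta'\alpha'}$ is then read as two commuting copies of the $R$-matrix, one acting on the column pair $(b,a)$ and one on the row pair $(\beta,\alpha)$.

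The main obstacle is precisely this verification of \eqref{zaa4}: it requires careful bookkeeping with the quantum-minor identities, with the normalization of the $R$-matrix, and with the exponents $(-q)^{a+\alpha-2n}$, and it is the only step that is not essentially formal. An alternative to the $RTT$/antipode route would be an induction on $n$, using quantum Sylvester-type identities to peel off one row and one column at a time. Everything else --- uniqueness, and the relations among the $z$'s alone and among the $z^*$'s alone --- is routine given the facts already recalled in the excerpt (centrality and \eqref{detqz} for $\det\nolimits_q\mathbf{z}$, and that \eqref{star_pol} defines an involution).
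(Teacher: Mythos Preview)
The paper does not prove this statement; Theorem~\ref{vaksman_hom} is quoted from \cite{vaksman_shilov1} without argument, so there is no proof in the present paper to compare your proposal against.

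On its own merits, your outline is the natural one and correctly isolates the only nontrivial point: verifying the cross-relation \eqref{zaa4} for the images $w_a^\alpha=(z_a^\alpha)^*$ given by \eqref{star_pol}. Uniqueness, and the relations \eqref{zaa1}--\eqref{zaa3*}, are indeed formal as you say. But you do not actually carry out the verification of \eqref{zaa4}; you only name two possible routes (the RTT/coquasitriangular calculation with the antipode, or an induction on $n$ via Sylvester-type identities) and acknowledge that the bookkeeping is delicate. That step is the entire content of the theorem, so as it stands your proposal is a correct plan rather than a proof. Either route you indicate can be made to work, but neither is short, and the normalizations $(-q)^{a+\alpha-2n}$ together with \eqref{detqz} are precisely where errors tend to occur; a complete proof must exhibit the computation, not just point at it.
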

Note that the $*$-algebra $(\mathbb{C}[GL_n]_q,\ast)$ is isomorphic to the $*$-algebra $\mathbb{C}[U_n]_q=(\mathbb{C}[GL_n]_q,\star)$ of regular functions on the quantum $U_n$ (see \cite{koelnik}) with the isomorphism
$\iota:(\mathbb{C}[GL_n]_q,\ast)\to (\mathbb{C}[GL_n]_q,\star)$ given by $\iota:z_a^\alpha\to q^{\alpha-n}z_a^\alpha$, $a,\alpha=1,\ldots, n$; the involution $\star$ in $\mathbb{C}[U_n]_q$ satisfies
$(z_a^\alpha)^\star=(-q)^{a-\alpha}(\det\nolimits_q\mathbf{z})^{-1}\det\nolimits_q\mathbf{z}_a^\alpha$, $a,\alpha=1,\ldots,n$.

\medskip

We  finish the section by a lemma that makes a connection between  $\mathrm{Pol}(\mathrm{Mat}_n)_q$ for different values of $n$.
For $\varphi\in[0,2\pi)$ let
\begin{equation}\label{def_Pi_phi}
\Pi_\varphi( z_j^i)=\begin{cases} q^{-1}z_j^i, \quad i,j <n, \\
e^{i\varphi}, \quad i=j=n,\\
0,\quad  \text{otherwise}.\end{cases}
\end{equation}

\begin{lemma}\label{pivarphi}
\begin{enumerate}
\item The map $\Pi_\varphi$ extends uniquely  to a $*$-homomorphism from $\mathrm{Pol}(\mathrm{Mat}_n)_q$ to
$\mathrm{Pol}(\mathrm{Mat}_{n-1})_q$.
\item The map $z^i_j\mapsto z^{i+n}_{j+n}$, $i,j=1,\ldots,n$  defines an embedding of $\mathrm{Pol}(\mathrm{Mat}_n)_q$ into $\mathrm{Pol}(\mathrm{Mat}_{2n})_q$.
\end{enumerate}
\end{lemma}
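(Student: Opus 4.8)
The plan is to establish both parts directly from the presentations by generators and relations. For part (1), since $\mathrm{Pol}(\mathrm{Mat}_n)_q$ is generated by the $z_j^i$ subject to \eqref{zaa1}--\eqref{zaa4} together with the conjugate relations, it suffices to check that the elements $\Pi_\varphi(z_j^i)\in\mathrm{Pol}(\mathrm{Mat}_{n-1})_q$ prescribed by \eqref{def_Pi_phi} satisfy the same relations with $n$ replaced by $n-1$; compatibility with the involution is built into the formula, and uniqueness of the extension is automatic because the $z_j^i$ generate. I would organise the verification by counting how many of the indices occurring in a given relation equal $n$. If none does, then the quadratic relations \eqref{zaa1}--\eqref{zaa3} and \eqref{zaa1*}--\eqref{zaa3*} are homogeneous of the same degree on both sides, so the scaling factor $q^{-1}$ in \eqref{def_Pi_phi} does nothing and one recovers verbatim the corresponding relation of $\mathrm{Pol}(\mathrm{Mat}_{n-1})_q$. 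If at least one index equals $n$, then every monomial in the relation contains a factor $z_j^i$ (or an adjoint of one) with exactly one index equal to $n$, which $\Pi_\varphi$ kills, unless that factor is $z_n^n$ or $(z_n^n)^*$; since $\Pi_\varphi(z_n^n)=e^{i\varphi}$ is a central unitary, a short case inspection shows that the surviving terms cancel correctly, the only scalar identity ever needed being $1-q^2=q^2(q^{-2}-1)$.

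The one genuinely computational case is relation \eqref{zaa4} with all of $a,b,\alpha,\beta<n$. Here the summation indices $a',b',\alpha',\beta'$ still run over $1,\dots,n$, and because $R_{aa}^{b'a'}\neq 0$ forces $a'=b'\ge a$ and $R_{\alpha\alpha}^{\beta'\alpha'}\neq 0$ forces $\alpha'=\beta'\ge\alpha$, the right-hand side does acquire a term proportional to $z_n^n(z_n^n)^*$, which $\Pi_\varphi$ sends to $1$, while the left-hand side and the other surviving terms are merely multiplied by $q^{-2}$. Using the instance of \eqref{zaa4} already valid in $\mathrm{Pol}(\mathrm{Mat}_{n-1})_q$, one verifies that the scalar produced by the $z_n^n(z_n^n)^*$-term, namely $q^2(q^{-2}-1)^2$, combines with the $q^{-2}$ rescaling to reproduce exactly the constant $(1-q^2)\delta_{ab}\delta^{\alpha\beta}$; this reduces to the identity $q^2(q^{-2}-1)^2+(1-q^2)(1-q^{-2})=0$, which is immediate.

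For part (2), write $\kappa$ for the assignment $z_j^i\mapsto z_{j+n}^{i+n}$. Every relation \eqref{zaa1}--\eqref{zaa4} and every conjugate relation is phrased solely in terms of the order relations among the indices, and these are preserved by the shift $i\mapsto i+n$; the only point requiring care is that in \eqref{zaa4} the sum over $a',b',\alpha',\beta'\in\{1,\dots,2n\}$ must stay within the block $\{n+1,\dots,2n\}$ when $a,b,\alpha,\beta$ do. This follows from the explicit shape of $R$: a nonzero entry $R_{ij}^{kl}$ has $(k,l)=(i,j)$, or $k=l=i=j$, or $k=l>j=i$, so $i,j>n$ forces $k,l>n$. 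Hence $\kappa$ extends to a $*$-homomorphism $\mathrm{Pol}(\mathrm{Mat}_n)_q\to\mathrm{Pol}(\mathrm{Mat}_{2n})_q$. For injectivity I would use a PBW basis: by the diamond lemma (applied to \eqref{zaa1}--\eqref{zaa4}, with every $(z)^*$ additionally moved to the right of every $z$ via \eqref{zaa4}), $\mathrm{Pol}(\mathrm{Mat}_n)_q$ has a basis of ordered monomials in the $z_a^\alpha$ and the $(z_b^\beta)^*$, analogous to the one recalled above for $\mathbb{C}[\mathrm{Mat}_n]_q$; the index shift carries this basis injectively onto a subset of the corresponding basis of $\mathrm{Pol}(\mathrm{Mat}_{2n})_q$, so $\kappa$ is injective.

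I expect the main obstacle to be the bookkeeping in relation \eqref{zaa4} in part (1): one must track exactly which terms of the $R$-matrix double sum survive the substitution $\Pi_\varphi$, and then verify the small scalar identity that makes the $(1-q^2)$-term come out correctly. The remaining relations are routine.
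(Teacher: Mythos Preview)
Your proposal is correct and takes essentially the same approach as the paper: a direct case-by-case verification that the images of the generators satisfy the defining relations, with the key scalar identity in the $a=b$, $\alpha=\beta<n$ case of \eqref{zaa4} being equivalent to the paper's $q^2(q^{-2}-1)^2+1-q^2=q^{-2}(1-q^2)$. For part~2 the paper leaves all details to the reader, so your explicit injectivity argument via a PBW basis actually goes slightly beyond what the paper provides.
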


\begin{proof}
1. It is enough to check that all relations \eqref{zaa1}-\eqref{zaa4} turn to correct identities under $\Pi_\varphi$. Relations \eqref{zaa1}-\eqref{zaa3} are checked easily. Indeed, if such a relation does not involve elements of the last row or column of $(z^i_j)_{i,j=1}^n$, then it remains the same. If a relation contains $z_n^n$, then either it is a relation of type \eqref{zaa1} and under $\Pi_\varphi$ it transforms to $0=0$, or it is a relation of type \eqref{zaa3} and under $\Pi_\varphi$ it transforms to $q^{-1}e^{i\varphi}z_j^i=q^{-1}e^{i\varphi}z_j^i$. If, finally, a relation involves some element from the last column or row but not $z_n^n$, then it transforms to $0=0$.

Now let us check the relations between holomorphic and antiholomorphic generators, namely, relations of type \eqref{zaa4}.
Let us consider several cases. The first case is $a \neq b$ and $\alpha \neq \beta$. Then the explicitly written commutation relation has the form
$(z_b^\beta)^*z_a^\alpha=z_{a}^{\alpha}(z_{b}^{\beta})^*$, and this relation either survives under $\Pi_{\varphi}$ or turns to the identity $0=0$.

The second case is when one of the pairs of indices $(a,b)$ and $(\alpha, \beta)$ coincides and the other does not. These cases are completely similar, so we will elaborate here only one. Let us suppose that $a=b$. Then the commutation relation can be rewritten in a more explicit way as follows:
$(z_a^\beta)^*z_a^\alpha=q \sum_{a'=1}^n R_{aa}^{a'a'} z_{a'}^{\alpha}(z_{a'}^{\beta})^*$. If $a=n$, then $\Pi_\varphi$ maps this relation to the identity $0=0$, since $\alpha \neq \beta$. Let us now assume that $a<n$. If either $\alpha$ or $\beta$ equals $n$, then also $\Pi_\varphi$ maps this relation to the identity $0=0$. Finally, if all indices are less than $n$, then $\Pi_\varphi$ maps this relation to the relation $q^{-2}(z_a^\beta)^*z_a^\alpha=q \sum_{a'=1}^{n-1} R_{aa}^{a'a'} z_{a'}^{\alpha}q^{-2}(z_{a'}^{\beta})^*$ which holds in $\mathrm{Pol}(\mathrm{Mat}_{n-1})_q$.

Now let us consider the last case where $a=b$ and $\alpha=\beta$. Then the relation can be written more explicitly as
$$(z_a^\alpha)^*z_a^\alpha=q^2 \sum_{a'=1}^n\sum_{\alpha'=1}^n R_{aa}^{a'a'}R_{\alpha\alpha}^{\alpha'\alpha'} z_{a'}^{\alpha'}(z_{a'}^{\alpha'})^*+1-q^2.$$

If $a=\alpha=n$, then we have the standard relation $(z_n^n)^*z_n^n=q^2z_n^n(z_n^n)^*+1-q^2$ which is mapped by $\Pi_\varphi$ to the identity $1=q^2+1-q^2$.

If $a=n$ and $\alpha <n$ (the other case is analogous), we have the relation $$(z_n^\alpha)^*z_n^\alpha=q^2 \sum_{\alpha'=1}^n R_{\alpha\alpha}^{\alpha'\alpha'} z_n^{\alpha'}(z_n^{\alpha'})^*+1-q^2.$$  $\Pi_\varphi$ maps this relation to the identity $0=-q^2(q^{-2}-1)+1-q^2$.

Finally, if $a<n$ and $\alpha<n$, then we can rewrite the relation in the most explicit way as follows:
\begin{multline*}
(z_a^\alpha)^*z_a^\alpha=q^2 z_{a}^{\alpha}(z_{a}^{\alpha})^* - q^2 \sum_{a'=a+1}^n (q^{-2}-1) z_{a'}^{\alpha}(z_{a'}^{\alpha})^* - q^2\sum_{\alpha'=\alpha+1}^n (q^{-2}-1) z_{a}^{\alpha'}(z_{a}^{\alpha'})^*+ \\ q^2 \sum_{a'=a+1}^n\sum_{\alpha'=\alpha+1}^n (q^{-2}-1)^2 z_{a'}^{\alpha'}(z_{a'}^{\alpha'})^*+1-q^2.
\end{multline*}

Applying $\Pi_\varphi$, we get the relation
\begin{multline*}
q^{-2}(z_a^\alpha)^*z_a^\alpha= z_{a}^{\alpha}(z_{a}^{\alpha})^* - \sum_{a'=a+1}^{n-1} (q^{-2}-1) z_{a'}^{\alpha}(z_{a'}^{\alpha})^* - \sum_{\alpha'=\alpha+1}^{n-1} (q^{-2}-1) z_{a}^{\alpha'}(z_{a}^{\alpha'})^*+ \\ \sum_{a'=a+1}^{n-1}\sum_{\alpha'=\alpha+1}^{n-1} (q^{-2}-1)^2 z_{a'}^{\alpha'}(z_{a'}^{\alpha'})^* +q^2(q^{-2}-1)^2 +1-q^2.
\end{multline*}
Obviously, $q^2(q^{-2}-1)^2 +1-q^2=q^{-2}(1-q^2)$, so after multiplying the relation by the common factor $q^2$ we get the corresponding relation in $\mathrm{Pol}(\mathrm{Mat}_{n-1})_q$.
So, we also checked all the relations between holomorphic and antiholomorphic generators of $\mathrm{Pol}(\mathrm{Mat}_n)_q$ and the map $\Pi_{\varphi}$ admits an extension to an algebra morphism. Its uniqueness is obvious.

2. It is a consequence of a similar easy inspection of the commutation relations; we leave the details to the reader.
\end{proof}

\subsection{The quantum universal enveloping algebra $U_q \mathfrak{sl}_N$ and its action on $\mathrm{Pol}(\mathrm{Mat}_n)_q$}

The Drinfeld-Jimbo quantum universal enveloping algebra is among the basic
notions of the quantum group theory. Recall the definition of the Hopf
algebra $U_q \mathfrak{sl}_N$ \cite{Jant}. Let $(a_{i,j})_{i,j=1}^{N-1}$
be the Cartan matrix of $\mathfrak{sl}_N$:
\begin{equation*}
a_{i,j}=
\begin{cases}
2,& i-j=0,
\\ -1,& |i-j|=1,
\\ 0,& \mathrm{otherwise}.
\end{cases}
\end{equation*}
The algebra $U_q \mathfrak{sl}_N$ is determined by the generators $E_i$,
$F_i$, $K_i$, $K_i^{-1}$, $i=1,\ldots,N-1$, and the relations
\begin{gather*}
K_iK_j=K_jK_i,\quad K_iK_i^{-1}=K_i^{-1}K_i=1,\quad
K_iE_j=q^{a_{ij}}E_jK_i,\quad K_iF_j=q^{-a_{ij}}F_jK_i,
\\ E_iF_j-F_jE_i=\delta_{ij}\,(K_i-K_i^{-1})/(q-q^{-1}),
\\ E_i^2E_j-(q+q^{-1})E_iE_jE_i+E_jE_i^2=0,\qquad |i-j|=1,
\\ F_i^2F_j-(q+q^{-1})F_iF_jF_i+F_jF_i^2=0,\qquad |i-j|=1,
\\ E_iE_j-E_jE_i=F_iF_j-F_jF_i=0,\qquad |i-j|\ne 1.
\end{gather*}
The comultiplication $\Delta$, the antipode $S$, and the counit
$\varepsilon$ are determined by
\begin{gather*}
\Delta(E_i)=E_i \otimes 1+K_i \otimes E_i,\quad \Delta(F_i)=F_i \otimes
K_i^{-1}+1 \otimes F_i,\quad \Delta(K_i)=K_i \otimes K_i,\label{comult}
\\ S(E_i)=-K_i^{-1}E_i,\qquad S(F_i)=-F_iK_i,\qquad S(K_i)=K_i^{-1},
\\ \varepsilon(E_i)=\varepsilon(F_i)=0,\qquad \varepsilon(K_i)=1.
\end{gather*}

Let $U_q \mathfrak{su}_{n,N-n}$ denotes
the $*$-Hopf algebra $(U_q \mathfrak{sl}_N,*)$ given by
$$
(K_j^{\pm 1})^*=K_j^{\pm 1},\qquad E_j^*=
\begin{cases}
K_jF_j,& j \ne n,
\\ -K_jF_j,& j=n,
\end{cases}\qquad F_j^*=
\begin{cases}
E_jK_j^{-1},& j \ne n,
\\ -E_jK_j^{-1},& j=n,
\end{cases}
$$
with $j=1,\ldots,N-1$.

An important fact from the theory of quantum bounded symmetric domains is that the $*$-algebra $\mathrm{Pol}(\mathrm{Mat}_n)_q$ possesses a quantum symmetry with respect to $U_q\mathfrak{su}_{n,n}$.   Recall a general notion of a module algebra. Namely, let $A$ be a Hopf algebra, $B$ an $A$-module and an algebra. Then $B$ is called an $A$-module algebra if the multiplication and embedding of the unit are morphisms of $A$-modules. If both $A$ and $B$ possess involutions, then they should satisfy the compatibility condition $(ab)^*=(S(a))^*b^*$ for each $a \in A, b \in B$.
\begin{proposition}[\cite{SV2}]\label{U_qg-module-algebra}
The $*$-algebra $\mathrm{Pol}(\mathrm{Mat}_n)_q$ possesses a $U_q \mathfrak{su}_{n,n}$-module algebra stru\-cture.
\end{proposition}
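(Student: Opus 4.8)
The plan is to establish the $U_q\mathfrak{su}_{n,n}$-module algebra structure on $\mathrm{Pol}(\mathrm{Mat}_n)_q$ by constructing the action explicitly on generators and then verifying compatibility with all defining relations. First I would recall that $\mathbb C[\mathrm{Mat}_n]_q$ carries a well-known $U_q\mathfrak{sl}_{2n}$-module algebra structure: under the identification of $\mathbb C[\mathrm{Mat}_n]_q$ with a subalgebra generated by part of the matrix entries in $\mathbb C[SL_{2n}]_q$ (the ``quantum big cell'' picture), the action of $E_i,F_i,K_i$ on the $z_a^\alpha$ is given by the standard formulas coming from the right/left regular representation; concretely one writes $K_i \cdot z_a^\alpha$, $E_i\cdot z_a^\alpha$, $F_i\cdot z_a^\alpha$ as specific (sparse) linear combinations of the $z_b^\beta$, where the middle node $i=n$ plays a distinguished role. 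Dually, $\mathbb C[\overline{\mathrm{Mat}}_n]_q$ inherits an action making the $(z_a^\alpha)^*$ transform in the contragredient-type way, consistent with the involution. The task is then to check that these two actions glue to an action on $\mathrm{Pol}(\mathrm{Mat}_n)_q$ satisfying the module-algebra axioms together with the $*$-compatibility $(hb)^*=(S(h))^*b^*$.

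The key steps, in order, would be: (i) write down the action of the generators $E_i,F_i,K_i^{\pm1}$ of $U_q\mathfrak{sl}_{2n}$ on the generators $z_a^\alpha$ of $\mathbb C[\mathrm{Mat}_n]_q$, checking that the holomorphic relations \eqref{zaa1}--\eqref{zaa3} are preserved (this is classical, due to the realization inside $\mathbb C[SL_{2n}]_q$); (ii) transport this via the involution to an action on the $(z_a^\alpha)^*$, using the chosen $*$-structure $U_q\mathfrak{su}_{n,n}=(U_q\mathfrak{sl}_{2n},*)$ with the sign twist at node $n$, and check the antiholomorphic relations \eqref{zaa1*}--\eqref{zaa3*} are preserved; (iii) verify that the cross-relations \eqref{zaa4} are preserved by the action — i.e.\ that applying $\Delta(h)$ to both sides of the $R$-matrix commutation relation yields an identity in $\mathrm{Pol}(\mathrm{Mat}_n)_q$; (iv) confirm that the multiplication map and unit inclusion are $U_q\mathfrak{sl}_{2n}$-module morphisms, which amounts to saying the action is by the twisted Leibniz rule dictated by $\Delta$, and that $h\cdot 1 = \varepsilon(h)1$; and (v) check the $*$-compatibility condition relating the involution on $\mathrm{Pol}(\mathrm{Mat}_n)_q$ to the $*$-Hopf structure, which is where the sign choices at the $n$-th node are forced.

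I expect the main obstacle to be step (iii): the cross-commutation relation \eqref{zaa4} is the genuinely ``mixed'' relation, quadratic in the $R$-matrix $R_{ba}^{b'a'}R_{\beta\alpha}^{\beta'\alpha'}$, and verifying its $U_q\mathfrak{su}_{n,n}$-invariance requires carefully combining the (non-cocommutative) coproduct, the action on both $z$'s and $z^*$'s, and the quadratic $R$-matrix identities (the Yang--Baxter / Hecke relations for this particular $R$). In practice the cleanest route is conceptual rather than computational: realize $\mathrm{Pol}(\mathrm{Mat}_n)_q$ as a quotient or subquotient of an object built from $\mathbb C[SL_{2n}]_q$ on which the module-algebra structure is manifest — for instance, identifying the relation \eqref{zaa4} as the image of a reflection-equation-type relation — so that invariance is inherited rather than checked relation-by-relation. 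This is exactly the strategy of \cite{SV2}, and since the statement is cited from there, for the present paper it suffices to recall the construction of the action on generators, state that the relations are preserved by the above argument, and refer to \cite{SV2} for the detailed verification; the routine but lengthy case analysis (entirely analogous in spirit to the computation in the proof of Lemma \ref{pivarphi}) can be omitted.
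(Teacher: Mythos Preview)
The paper does not prove this proposition at all: it is stated with a citation to \cite{SV2} and nothing further, after which the text immediately specializes to the $U_q\mathfrak{su}_n\otimes U_q\mathfrak{su}_n$-action given by the explicit formulas \eqref{K1,1-action}--\eqref{K2,3-action}. Your proposal correctly anticipates this in its final paragraph, and the outline you give (construct the action on generators, extend by the twisted Leibniz rule, check invariance of the three families of relations, verify $*$-compatibility with the sign twist at node $n$) is a fair description of what a proof along the lines of \cite{SV2} would entail; in particular your identification of step (iii), invariance of the cross-relation \eqref{zaa4}, as the substantive point is accurate. So there is nothing to compare here beyond noting that you have supplied considerably more than the paper does.
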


From here on we will focus on the $*$-subalgebra of $U_q \mathfrak{su}_{n,n}$ generated by all $K_j^{\pm 1}, E_j,F_j$ for $j \neq n$. This subalgebra will be denoted as $U_q \mathfrak{su}_n \otimes U_q \mathfrak{su}_n$. The formulas below completely determine the $U_q \mathfrak{su}_n \otimes U_q \mathfrak{su}_n$-action on $\mathrm{Pol}(\mathrm{Mat}_n)_q$.

For ${a,\alpha=1,\ldots,n}$ and $k < n$ we have
\begin{align}
K_k^{\pm 1}z_a^\alpha&=
\begin{cases}
q^{\pm 1}z_a^\alpha,& a=k,
\\ q^{\mp 1}z_a^\alpha,& a=k+1,
\\ z_a^\alpha,&\mathrm{ otherwise},
\end{cases}\label{K1,1-action}
\\ F_kz_a^\alpha&=q^{1/2}\cdot
\begin{cases}
z_{a+1}^\alpha,& a=k,
\\ 0,&\mathrm{otherwise},
\end{cases}\label{K1,2-action}
\\ E_kz_a^\alpha&=q^{-1/2}\cdot
\begin{cases}
z_{a-1}^\alpha,& a=k+1,
\\ 0,& \mathrm{otherwise},
\end{cases}\label{K1,3-action}
\end{align}
while for $k>n$ we have
\begin{align}
K_k^{\pm 1}z_a^\alpha&=
\begin{cases}
q^{\pm 1}z_a^\alpha,& \alpha=2n-k,
\\ q^{\mp 1}z_a^\alpha,& \alpha=2n-k+1,
\\ z_a^\alpha,&\mathrm{ otherwise},
\end{cases}\label{K2,1-action}
\\ F_kz_a^\alpha&=q^{1/2}\cdot
\begin{cases}
z_a^{\alpha+1},& \alpha=2n-k,
\\ 0,&\mathrm{otherwise},
\end{cases}\label{K2,2-action}
\\ E_kz_a^\alpha&=q^{-1/2}\cdot
\begin{cases}
z_a^{\alpha-1},& \alpha=2n-k+1,
\\ 0,& \mathrm{otherwise}.
\end{cases}\label{K2,3-action}
\end{align}
Recall that the action on other elements of $\mathrm{Pol}(\mathrm{Mat}_n)_q$ can be obtained from the property that $\xi(fg)=\sum_i \xi_i^{(1)}(f)\xi_i^{(2)}(g)$ and $S(\xi)^*(f^*)=(\xi(f))^*$ for $\xi\in U_q \mathfrak{su}_n \otimes U_q \mathfrak{su}_n$, $f$, $g\in \mathrm{Pol}(\mathrm{Mat}_n)_q$ and $\Delta(\xi)=\sum_i\xi_i^{(1)}\otimes\xi_i^{(2)}$ (in the Sweedler notation).

The above formulas show that the action of $U_q \mathfrak{su}_n \otimes U_q \mathfrak{su}_n$ preserves the degree of each element $f \in \mathrm{Pol}(\mathrm{Mat}_n)_q$. As a simple corollary, we have
\begin{lemma}
The $U_q \mathfrak{su}_n \otimes U_q \mathfrak{su}_n$-action in $\mathrm{Pol}(\mathrm{Mat}_n)_q$ is locally finite.
\end{lemma}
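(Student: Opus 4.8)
The plan is to produce an exhausting filtration of $\mathrm{Pol}(\mathrm{Mat}_n)_q$ by finite-dimensional $U_q \mathfrak{su}_n \otimes U_q \mathfrak{su}_n$-submodules, which yields local finiteness at once. Two observations carry the argument. First, the linear span $V$ of all generators $z_a^\alpha$, $(z_a^\alpha)^*$ ($a,\alpha=1,\dots,n$) is already a finite-dimensional submodule: the formulas \eqref{K1,1-action}--\eqref{K2,3-action} show that each of $K_k^{\pm 1}, E_k, F_k$ with $k\neq n$ sends every $z_a^\alpha$ to a scalar multiple of some $z_b^\beta$ or to $0$, so the holomorphic span is invariant, and these elements generate $U_q \mathfrak{su}_n \otimes U_q \mathfrak{su}_n$ as an algebra; the antiholomorphic span is then invariant too by the compatibility $S(\xi)^*(f^*)=(\xi(f))^*$ together with the fact that $\xi\mapsto S(\xi)^*$ is a bijection of $U_q \mathfrak{su}_n \otimes U_q \mathfrak{su}_n$ (it is the composite of the antipode with the involution, both anti-automorphisms). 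Second — and this is the point that makes the whole thing work — in any module algebra $B$ over a Hopf algebra, the product $W_1\cdot W_2:=\mathrm{span}\{w_1w_2: w_i\in W_i\}$ of two finite-dimensional submodules is again a finite-dimensional submodule, being the image of the finite-dimensional module $W_1\otimes W_2$ (with its $\Delta$-twisted structure) under the multiplication map, which is a module morphism by the module-algebra axiom.

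Granting these, I would argue as follows. By Proposition \ref{U_qg-module-algebra}, restricted to the subalgebra $U_q \mathfrak{su}_n \otimes U_q \mathfrak{su}_n$, the $*$-algebra $\mathrm{Pol}(\mathrm{Mat}_n)_q$ is a module algebra; the unit spans the trivial submodule $V^{\cdot 0}:=\mathbb{C}1$, and for $N\geq 1$ the iterated product $V^{\cdot N}:=V\cdot V\cdots V$ ($N$ factors) is a finite-dimensional submodule by the two observations above. Since $V$ contains all algebra generators, every monomial of length $m$ in the $z_a^\alpha$ and $(z_a^\alpha)^*$ lies in $V^{\cdot m}$, so $\mathcal{F}_N:=\sum_{m=0}^N V^{\cdot m}$ is an increasing chain of finite-dimensional submodules with $\bigcup_N\mathcal{F}_N=\mathrm{Pol}(\mathrm{Mat}_n)_q$. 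Hence every element of $\mathrm{Pol}(\mathrm{Mat}_n)_q$ lies in a finite-dimensional submodule, which is exactly local finiteness.

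I do not expect a serious obstacle: once observation two is isolated, everything reduces to reading off observation one from the explicit action formulas. The only points requiring a little care are the $*$-bookkeeping in the first step — checking that $\xi\mapsto S(\xi)^*$ is bijective (the antipode of a Drinfeld--Jimbo algebra being invertible) and using the module-algebra compatibility precisely in the form $(\xi(f))^*=S(\xi)^*(f^*)$ stated in the text — and, for full rigour, noting that the tensor-product module structure on $W_1\otimes W_2$ used in the second step is the same one entering the definition of a module algebra. Note that the degree-preservation remark preceding the lemma, while true, is not actually needed for this proof, and the relation \eqref{zaa4} enters only through the (already assumed) existence of the module-algebra structure.
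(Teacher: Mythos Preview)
Your argument is correct. The paper itself offers no formal proof: the lemma is stated as ``a simple corollary'' of the observation, made in the sentence immediately preceding it, that the action preserves the degree of each element. Your filtration argument is essentially a rigorous unpacking of that remark, but via a slightly different route.

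The difference is worth recording. The paper invokes degree preservation with respect to the grading $\deg z_a^\alpha=1$, $\deg(z_a^\alpha)^*=-1$; however, the homogeneous components of this $\mathbb{Z}$-grading on $\mathrm{Pol}(\mathrm{Mat}_n)_q$ are \emph{not} finite-dimensional (relation~\eqref{zaa4} forces, e.g., all of $1$, $z_a^\alpha(z_a^\alpha)^*$, $(z_a^\alpha)^2((z_a^\alpha)^*)^2,\dots$ into degree~$0$), so degree preservation alone does not literally give local finiteness. What one really needs is that the action preserves the finite-dimensional pieces of the word-length \emph{filtration}, which is exactly your $\mathcal{F}_N=\sum_{m\le N}V^{\cdot m}$. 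Your two observations --- that the span of the generators is a finite-dimensional submodule, and that products of finite-dimensional submodules in a module algebra stay finite-dimensional submodules --- make this precise. So your proof is not merely more detailed than the paper's; it closes a small gap in the one-line justification, and your closing remark that the degree-preservation statement is ``not actually needed'' is well taken.
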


\subsection{The $*$-algebra $\mathbb{C}[SU_n]_q$ and its coaction}\label{suqn}

Recall the definition of the Hopf algebra
$\mathbb{C}[SL_n]_q$. It is defined by the generators $\{t_{i,j}:i,j=1,\ldots,n\}$ and the relations
\begin{flalign*}
& t_{\alpha, a}t_{\beta, b}-qt_{\beta, b}t_{\alpha, a}=0, & a=b \quad \& \quad
\alpha<\beta,& \quad \text{or}\quad a<b \quad \& \quad \alpha=\beta,
\\ & t_{\alpha, a}t_{\beta, b}-t_{\beta, b}t_{\alpha, a}=0,& \alpha<\beta \quad
\&\quad a>b,&
\\ & t_{\alpha, a}t_{\beta, b}-t_{\beta, b}t_{\alpha, a}-(q-q^{-1})t_{\beta, a}
t_{\alpha, b}=0,& \alpha<\beta \quad \& \quad a<b, &
\\ & \det \nolimits_q \mathbf{t}=1.
\end{flalign*}
Here $\det_q \mathbf{t}$ is the  q-determinant of the matrix
$\mathbf{t}=(t_{\alpha,a})_{\alpha,a=1}^{n}$.

It is well known (see \cite{KlSh} or other standard book on quantum groups) that
$\mathbb{C}[SU_n]_q\stackrel{\mathrm{def}}{=}(\mathbb{C}[SL_n]_q,\star)$ is
a Hopf $*$-algebra; the comultiplication $\Delta$, the counit $\varepsilon$, the antipode $S$ and the involution $\star$ are defined as follows
\begin{equation*}
\Delta(t_{i,j})=\sum_k t_{i,k}\otimes t_{k,j},\quad \varepsilon(t_{i,j})=\delta_{ij},\quad S(t_{i,j})=(-q)^{i-j}\det\nolimits_q\mathbf{t}_{ji},
\end{equation*}
 and
\begin{equation*}\label{star5}
t_{i,j}^\star=(-q)^{j-i}\det \nolimits_q\mathbf{t}_{ij},
\end{equation*}
where ${\mathbf t}_{ij}$ is the matrix derived from ${\mathbf t}$ by discarding its $i$-th row and $j$-th column. We have, in particular,
\begin{equation}\label{anti-star}
S(t_{i,j})=t_{j,i}^\star \text{ and } S^2(t_{i,j})=q^{2(i-j)}t_{i,j}, \quad i,j=1,\ldots, n
\end{equation}
(\cite[Proposition 9.10]{KlSh}).
From the relations it easily follows that the mapping $t_{i,j}\mapsto q^{i-j}t_{i,j}$ extends to an automorphism $\alpha$ of $\mathbb C[SU_n]_q$ such that $\alpha(t_{i,j}^\star)=q^{j-i}t_{i,j}^\star$.  Combining, for example, this result with (\ref{anti-star}), one can see that
\begin{equation}\label{auto}\theta: t_{i,j}\mapsto q^{j-i}t_{j,i}
\end{equation}
 gives  a $*$-automorphism of $\mathbb C[SU_n]_q$: we have $\theta(t_{i,j})=\alpha(S(t_{i,j})^\star)$.

There is a canonical isomorphism
$\mathbb{C}[SU_n]_q\simeq\mathbb{C}[U_n]_q/\langle\det\nolimits_q\mathbf{z}-1\rangle,$ given by $t_{i,j}\mapsto z^i_j+\langle\det\nolimits_q\mathbf{z}-1\rangle$, $i,j=1,\ldots,n$;  here $\langle\det\nolimits_q\mathbf{z}-1\rangle$ denotes  the two-sided $*$-ideal generated by $\det_q\mathbf{z}-1$ (see \cite{KlSh}).

Theorem \ref{vaksman_hom} and the remark after it give  a $*$-homomorphism $\phi:\mathrm{Pol}(\mathrm{Mat}_n)_q\to\mathbb C[SU_n]_q$ such that
\begin{equation}\label{hom}
\phi(z_j^i)=q^{i-n}t_{i,j},\ i,j=1,\ldots n.
\end{equation}

 Let us recall here the standard construction of $*$-representations of $\mathbb C[SU_n]_q$.
 Consider the Hilbert space $l^2(\mathbb Z_+)$ with the standard basis $\{e_n: n \in \mathbb Z_+\}$. The formulas
\begin{align*}
& \Pi(t_{1,1})e_n=(1-q^{2n})^{1/2}e_{n-1}, \quad && \Pi(t_{1,2})e_n=q^{n+1}e_n,
\\ & \Pi(t_{2,1})e_n=-q^n e_n, \quad && \Pi(t_{2,2})e_n=(1-q^{2n+2})^{1/2}e_{n+1}, \quad n \in \mathbb Z_+
\end{align*}
define an irreducible $*$-representation of $\mathbb C[SU_2]_q$, (see e.g. \cite{KlSh} or \cite{KorS}).

 For $j\in\{1,\ldots,n-1\}$ let $\psi_j: \mathbb C[SU_n]_q \rightarrow \mathbb C[SU_2]_q$ denote the morphism of $*$-Hopf algebras defined on the generators as follows:
\begin{equation*}
\psi_j(t_{a,b})=\begin{cases} t_{a-j+1,b-j+1}, j \leq a,b \leq j+1,
\\ \delta_{ab}, \text{otherwise}.
\end{cases}
\end{equation*}
Recall that the symmetric group $S_n$ is the Weyl group of $\mathfrak{sl}_n$, and denote by $s_1,\ldots,s_{n-1}$ the transpositions $(1,2),\ldots, (n-1,n)$, respectively. For each $s_j$ one can associate an irreducible $*$-representation of $\mathbb C[SU_n]_q$ via $\pi_{s_j}=\Pi \circ \psi_j$.

Let $s$ be  an arbitrary element of the Weyl group, and $s=s_{j_1}\ldots s_{j_k}$ its reduced decomposition. Then one associates to it the $*$-representation of the Hopf $*$-algebra $\mathbb C[SU_n]_q$ by the rule: $$\pi_s =\pi_{s_{j_1}} \otimes \ldots \otimes \pi_{s_{j_k}}.$$ A remarkable result of Soibelman \cite{soibelman1,soibelman2} and Soibelman and Vaksman \cite{vaksman_soibelman} (see also \cite[chapter 3.6]{KorS} for a general result) says that $\pi_s$ is irreducible and  does not depend on the reduced decomposition, i.e. two representations obtained through this construction are unitarily equivalent if and only if they correspond to the same element of the Weyl group; moreover, any irreducible representation of $\mathbb C[SU_n]_q$ differs from some $\pi_s$ by tensor multiplication by a one-dimensional representation.

\medskip

Recall a general fact on the connection between modules and comodules of algebras and their Hopf duals. In the following proposition $A^\circ$ denotes the Hopf dual (or the finite dual) of an algebra $A$ (the definition can be found in \cite{BG}[p.82]).
\begin{proposition}[{\cite{BG}[p.86-87]}]

\begin{itemize}
\item Let A be an algebra and $V$ be a left $A$-module. Then $V$ can be made into a right $A^\circ$-comodule whose associated left $A$-module is $V$ if and only if $V$ is a locally finite $A$-module.
\item $V$ possesses a left $A$-module locally finite algebra structure if and only if there exists a right $A^\circ$-comodule algebra structure on it.
\end{itemize}
\end{proposition}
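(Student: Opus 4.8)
The plan is to establish, once and for all, the standard bijection between right $A^\circ$-comodule structures on $V$ and locally finite left $A$-module structures on $V$, and then to match the two extra ``algebra'' compatibilities under it. Recall that $A^\circ\subseteq A^*$ consists of the functionals whose kernel contains a two-sided ideal of finite codimension; the transpose of the multiplication of $A$ makes $A^\circ$ a coalgebra with counit $f\mapsto\langle f,1\rangle$, and when $A$ is a Hopf algebra the transposes of $\Delta_A,\varepsilon_A,S_A$ promote it to a Hopf algebra, with product determined by $\langle fg,a\rangle=\sum\langle f,a_{(1)}\rangle\langle g,a_{(2)}\rangle$ and unit $\varepsilon_A$. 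Given a right $A^\circ$-comodule $(V,\rho)$ with $\rho(v)=\sum v_{(0)}\otimes v_{(1)}$, I would define the \emph{associated} action by $a\cdot v:=\sum v_{(0)}\langle v_{(1)},a\rangle$; the comodule axioms translate directly into the module axioms.

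First I would treat the easy implication. If $(V,\rho)$ is a right $A^\circ$-comodule and $v\in V$, write $\rho(v)=\sum_{i=1}^m v_i\otimes f_i$ with the $f_i\in A^\circ$ linearly independent. Then $a\cdot v\in\operatorname{span}\{v_1,\dots,v_m\}$ for every $a$, and the counit axiom gives $v=\sum_i\langle f_i,1\rangle v_i$ in the same span; hence the submodule generated by $v$ is finite-dimensional, so $V$ is locally finite, and its associated module structure is manifestly the one we started with.

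For the reverse implication let $V$ be a locally finite $A$-module. For each finite-dimensional submodule $W\subseteq V$, pick a basis $w_1,\dots,w_n$; the structure map $\sigma^W\colon A\to\operatorname{End}(W)$ is an algebra homomorphism with kernel of finite codimension, so its matrix coefficients $\sigma^W_{ij}$, defined by $a\,w_j=\sum_i\sigma^W_{ij}(a)w_i$, vanish on $\ker\sigma^W$ and thus lie in $A^\circ$. Set $\rho_W(w_j):=\sum_i w_i\otimes\sigma^W_{ij}$ and extend linearly. The crucial point is compatibility: if $W\subseteq W'$ and the basis of $W$ is completed to one of $W'$, then $A$-invariance of $W$ makes the representing matrices block upper-triangular, so $\sigma^{W'}_{ij}=\sigma^W_{ij}$ for $i,j\le n$; hence the maps $\rho_W$ agree on overlaps, and since $V$ is the directed union of its finite-dimensional submodules they glue to a well-defined linear $\rho\colon V\to V\otimes A^\circ$. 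Counitality of $\rho$ is the identity $\sigma^W_{ij}(1)=\delta_{ij}$; coassociativity is the dual of $\sigma^W$ being multiplicative, namely $\Delta_{A^\circ}\sigma^W_{ij}=\sum_k\sigma^W_{ik}\otimes\sigma^W_{kj}$; and by construction the action associated with $\rho$ recovers the original one. Uniqueness of such a $\rho$ follows from the linear-independence observation above (an element of $V\otimes A^\circ$ killed by $\id\otimes\langle\,\cdot\,,a\rangle$ for all $a$ must vanish), so the two constructions are mutually inverse.

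For the second statement I would assume $A$, hence $A^\circ$, is a Hopf algebra, so that $V\otimes V$ carries the diagonal module structure $a\cdot(v\otimes w)=\sum a_{(1)}v\otimes a_{(2)}w$ and the codiagonal comodule structure $v\otimes w\mapsto\sum v_{(0)}w_{(0)}\otimes v_{(1)}w_{(1)}$. Then for an algebra $V$ and all $a,v,w$ the chain $a\cdot(vw)=\sum(a_{(1)}\cdot v)(a_{(2)}\cdot w)=\sum v_{(0)}w_{(0)}\langle v_{(1)}w_{(1)},a\rangle$, together with the fact that an element of $V\otimes A^\circ$ annihilated by $\id\otimes\langle\,\cdot\,,a\rangle$ for every $a$ vanishes, shows that the module-algebra identity for $V$ is equivalent to $\rho(vw)=\sum v_{(0)}w_{(0)}\otimes v_{(1)}w_{(1)}$; likewise $a\cdot 1_V=\varepsilon(a)1_V$ for all $a$ is equivalent to $\rho(1_V)=1_V\otimes 1_{A^\circ}$, since $1_{A^\circ}=\varepsilon_A$. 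Combining this with the first part yields the claimed equivalence between locally finite left $A$-module algebra structures and right $A^\circ$-comodule algebra structures. I expect the only genuinely non-formal step to be the reverse implication of the first bullet — checking that the matrix coefficients $\sigma^W_{ij}$ really land in $A^\circ$ and that the locally defined maps $\rho_W$ are consistent enough to glue into a bona fide comodule structure; once this is in place everything else is bookkeeping with the duality between products and coproducts.
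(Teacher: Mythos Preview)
The paper does not give its own proof of this proposition: it is quoted verbatim from Brown--Goodearl \cite{BG} (pp.~86--87) and used as a black box to obtain Lemma~\ref{coaction}. Your argument is correct and is essentially the standard proof one finds in that reference, so there is nothing to compare against here.

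One small organizational remark: your gluing step would be cleaner if you invoked the uniqueness observation \emph{before} rather than after the gluing. As written, you fix a basis of $W$, extend it to $W'$, and read off block upper-triangularity; but for two non-nested finite-dimensional submodules $W_1,W_2$ you would need to pass through $W_1+W_2$ with two a priori different basis extensions. The cleanest route is to note first that $\rho_W$ is basis-independent --- indeed, it is the unique element of $\operatorname{Hom}(W,W\otimes A^\circ)$ satisfying $(\id\otimes\operatorname{ev}_a)\circ\rho_W(w)=a\cdot w$ for all $a$, by the same ``an element of $V\otimes A^\circ$ killed by every $\id\otimes\langle\cdot,a\rangle$ vanishes'' argument you already use --- and then compatibility on overlaps and well-definedness of the glued map are immediate. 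This is a cosmetic point; the mathematics is sound.
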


\begin{lemma} \label{coaction} The map
$$ \mathcal D_n:z_j^i\mapsto\sum_{a,b=1}^n z_b^a\otimes t_{b,j}\otimes t_{a,i}, i,j=1,\ldots,n$$
extends uniquely to a $*$-homomorphism  $$\mathcal D_n:\mathrm{Pol}(\mathrm{Mat}_n)_q\to \mathrm{Pol}(\mathrm{Mat}_n)_q\otimes\mathbb C[SU_n]_q\otimes \mathbb C[SU_n]_q.$$
\end{lemma}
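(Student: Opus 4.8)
The plan is to reduce the statement to a formal consequence of the $U_q\mathfrak{su}_n\otimes U_q\mathfrak{su}_n$-module algebra structure on $\mathrm{Pol}(\mathrm{Mat}_n)_q$ (Proposition \ref{U_qg-module-algebra} and the explicit formulas \eqref{K1,1-action}--\eqref{K2,3-action}), together with the module--comodule dictionary quoted just above. The key observation is that the prospective target $\mathrm{Pol}(\mathrm{Mat}_n)_q\otimes\mathbb C[SU_n]_q\otimes\mathbb C[SU_n]_q$ is built from two copies of $\mathbb C[SU_n]_q$, and that $\mathbb C[SU_n]_q$ is (a quotient of) the Hopf dual pairing with $U_q\mathfrak{sl}_n$; the coefficients $\sum_{a,b}z_b^a\otimes t_{b,j}\otimes t_{a,i}$ are precisely the matrix-coefficient form of how the generators $z_j^i$ transform under the two $U_q\mathfrak{su}_n$-factors (one acting on the lower index via \eqref{K1,1-action}--\eqref{K1,3-action}, the other on the upper index via \eqref{K2,1-action}--\eqref{K2,3-action}). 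So the first step is to identify $\mathcal D_n$ as the comodule-algebra coaction dual to this module-algebra action: by the quoted proposition, since the action is locally finite (the Lemma immediately preceding states this) and $\mathrm{Pol}(\mathrm{Mat}_n)_q$ is a module algebra, there is a canonical right $(U_q\mathfrak{su}_n\otimes U_q\mathfrak{su}_n)^\circ$-comodule algebra structure on it; then one composes with the Hopf-algebra map $\mathbb C[SU_n]_q\otimes\mathbb C[SU_n]_q\to (U_q\mathfrak{su}_n\otimes U_q\mathfrak{su}_n)^\circ$ coming from the standard dual pairing of $\mathbb C[SU_n]_q$ with $U_q\mathfrak{sl}_n$.

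Concretely I would proceed as follows. First, check on generators that the formula for $\mathcal D_n(z_j^i)$ agrees with the composite coaction just described: this is a direct computation matching \eqref{K1,1-action}--\eqref{K1,3-action} and \eqref{K2,1-action}--\eqref{K2,3-action} against the defining corepresentation $\Delta(t_{i,j})=\sum_k t_{i,k}\otimes t_{k,j}$ of $\mathbb C[SU_n]_q$, i.e. verifying that $(z_b^a)_{a,b}$ carries the corepresentation $\mathbf t\otimes\mathbf t$ in the appropriate tensor-factor slots (one factor the standard corepresentation, the other dual/conjugate as dictated by the upper-versus-lower index placement). Having matched the generators, uniqueness of the extension is automatic since the $z_j^i$, $(z_j^i)^*$ generate $\mathrm{Pol}(\mathrm{Mat}_n)_q$. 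The existence of the $*$-homomorphic extension then follows because the comodule-algebra structure produced by the proposition is, by construction, an algebra homomorphism; $*$-compatibility follows from the compatibility condition $(ab)^*=(S(a))^*b^*$ for the module algebra together with the fact that both $\psi_j$-type maps and the pairing respect the involutions, so the dual comodule structure is a $*$-comodule structure.

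Alternatively — and this is the route I would actually write out if the dual-pairing bookkeeping gets heavy — one can verify directly that $\mathcal D_n$ respects each of the relations \eqref{zaa1}--\eqref{zaa4}. This is the more elementary but more laborious path: one substitutes $z_a^\alpha\mapsto\sum z_b^a{}'\otimes t_{b,a}\otimes t_{a',\alpha}$ into each relation and uses the $\mathbb C[SL_n]_q$-relations among the $t_{i,j}$ (which have exactly the same shape as \eqref{zaa1}--\eqref{zaa3}) on each of the two right-hand tensor legs, plus the $R$-matrix identity underlying \eqref{zaa4}. The holomorphic relations \eqref{zaa1}--\eqref{zaa3} go through because $\mathbf t$ satisfies the same FRT-type relations as $\mathbf z$ in both legs; the mixed relation \eqref{zaa4} is the genuinely substantial check and requires the Yang--Baxter/Hecke identity for the $R$-matrix $R_{ij}^{kl}$ to reorganize the double sum — this is the step I expect to be the main obstacle, since it involves braiding $t$'s past $t^\star$'s on both legs simultaneously and invoking \eqref{star5} for the antiholomorphic generators of $\mathbb C[SU_n]_q$. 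Because of this, the conceptual Hopf-dual argument is preferable: it packages exactly this $R$-matrix bookkeeping into the already-established fact (Proposition \ref{U_qg-module-algebra}) that $\mathrm{Pol}(\mathrm{Mat}_n)_q$ is a $U_q\mathfrak{su}_{n,n}$-module algebra, so no new $R$-matrix computation is needed — one only needs the locally finite restriction to $U_q\mathfrak{su}_n\otimes U_q\mathfrak{su}_n$ and the standard dual pairing, both already in hand.
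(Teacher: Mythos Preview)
Your approach is essentially the same as the paper's: both invoke the module--comodule dictionary (the proposition quoted from \cite{BG}) applied to the locally finite $U_q\mathfrak{su}_n\otimes U_q\mathfrak{su}_n$-module algebra structure, then verify on generators that the resulting coaction is exactly $z_j^i\mapsto\sum_{a,b}z_b^a\otimes t_{b,j}\otimes t_{a,i}$ using the explicit pairing values, and finally deduce $*$-compatibility from the compatibility of the involutions. The only cosmetic difference is that the paper treats one $U_q\mathfrak{su}_n$-factor at a time (writing the ``half-coaction'' $z_j^i\mapsto\sum_b z_b^i\otimes t_{b,j}$ and checking $\xi\cdot z_j^i=\sum_b z_b^i\,t_{b,j}(\xi)$ first for generators $\xi$ and then multiplicatively), and one small caution: your phrase ``compose with the Hopf-algebra map $\mathbb C[SU_n]_q\otimes\mathbb C[SU_n]_q\to (U_q\mathfrak{su}_n\otimes U_q\mathfrak{su}_n)^\circ$'' is the wrong direction---what you actually need (and what your generator check establishes) is that the $A^\circ$-coaction takes values in the sub-Hopf-algebra $\mathbb C[SU_n]_q\otimes\mathbb C[SU_n]_q$.
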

\begin{proof}
Let us focus on the first part $U_q \mathfrak{sl}_n \subset U_q \mathfrak{sl}_n \otimes U_q \mathfrak{sl}_n$, arguments for the second one are the same. Recall that $\mathbb C[SL_n]_q$ is the (finite) dual for the quantum universal enveloping algebra $U_q \mathfrak{sl}_n$. As the $U_q\mathfrak{sl}_n$-action on $\mathrm{Pol}(\mathrm{Mat}_n)_q$ defined by (\ref{K1,1-action})-(\ref{K1,3-action}) is locally finite, we can apply the previous proposition to obtain a  right $\mathbb C[SL_n]_q$-comodule algebra structure on ${\mathrm{Pol}}(\mathrm{Mat}_n)_q$.
We claim that this coaction is given by
\begin{equation}\label{half-coaction}
z_j^i\mapsto \sum_{b=1}^n z_b^i\otimes t_{b,j},\quad 1\leq i,j\leq n.
\end{equation}
To see this it is enough to verify that
\begin{equation}\label{act}
\xi\cdot z^i_j=\sum_{b=1}^n z_b^it_{b,j}(\xi) \text{ for any }\xi\in U_q\mathfrak{sl}_n, 1\leq i,j\leq n.
\end{equation}
For the generators $\xi\in U_q\mathfrak{sl}_n$ this can be easily recovered from the explicit
formulas for the linear functionals $t_{b,j} \in \mathbb C[SL_n]_q \subset (U_q \mathfrak{sl}_n)^*$. One has
\begin{equation*}
t_{i,i+1}(E_i)=q^{-1/2}, \quad t_{i+1,i}(F_i)=q^{1/2}, \quad t_{i,i}(K_i)=q, \quad t_{i,i}(K_{i-1})=q^{-1}, \quad i=1,\ldots,n,
\end{equation*}
and all other evaluations of $t_{b,j}$ on the generators of $U_q \mathfrak{sl}_n$ are zero (see e.g. \cite{KlSh}).

To see that (\ref{act})  holds for any $\xi\in U_q\mathfrak{sl}_n$ it is enough to observe that whenever
$\sum_{b=1}^n z_b^i t_{b,j}(\xi_m)=\xi_m\cdot z_j^i$, $m=1,2$, then
\begin{eqnarray*}
\sum_{b=1}^n z_b^it_{b,j}(\xi_1\xi_2)&=&\sum_{b=1}^nz_b^i\Delta(t_{b,j})(\xi_1\otimes\xi_2)= \sum_{b=1}^nz_b^i\sum_{k=1}^n t_{b,k}(\xi_1)t_{k,j}(\xi_2)\\
&=&\sum_{k=1}^n(\sum_{b=1}^nz_b^it_{b,k}(\xi_1))t_{k,j}(\xi_2)=\sum_{k=1}^n(\xi_1\cdot z_k^i)t_{k,j}(\xi_2)=\xi_1\cdot(\sum_{k=1}^nz_k^it_{k,j}(\xi_2))\\
&=&\xi_1\cdot(\xi_2\cdot z_j^i)=(\xi_1\xi_2)\cdot z_j^i.
\end{eqnarray*}

Since the algebra $\mathrm{Pol}(\mathrm{Mat}_n)_q$ is a $U_q \mathfrak{su}_n$-module algebra, and the involutions in $U_q \mathfrak{su}_n$ and $\mathbb C[SU_n]_q$ are compatible, one deduces that the map \eqref{half-coaction} extends to a $*$-homomorphism.
\end{proof}

\medskip

\medskip

\subsection{On $*$-representation  of $\mathrm{Pol}(\mathrm{Mat}_n)_q$}\label{representations}

\medskip
In this section we shall discuss $*$-representations of the $*$-algebra $\mathrm{Pol}(\mathrm{Mat}_n)_q$.
One of the most important $*$-representations of  $\mathrm{Pol}(\mathrm{Mat}_n)_q$ is the so-called Fock representation which we now define.

Let ${\mathcal H}_{F,n}$ be a $\mathrm{Pol}(\mathrm{Mat}_n)_q$-module with one generator $v_0$ and the defining relations
\begin{equation*}
\pi_{F,n}(z_j^i)^*v_0=0, \quad i,j =1,\ldots,n.
\end{equation*}

It was proved in \cite[Corollary 2.3, Proposition 2.4, Proposition 2.5]{ssv} that there exists a unique sesquilinear form $(\cdot,\cdot)$ on ${\mathcal H}_{F,n}$ such that $(v_0,v_0)=1$ and $(\pi_{F,n}(f)u,v)=(u,\pi_{F,n}(f^*)v)$ for any $f\in\mathrm{Pol}(\mathrm{Mat}_n)_q$, $u,v\in {\mathcal H}_{F,n}$. Moreover, the sesquilinear form is positive definite and the linear mappings $\pi_{F,n}(f)$ define a bounded $*$-representation, called the Fock representation,  of $\mathrm{Pol}(\mathrm{Mat}_n)_q$; it acts  on the Hilbert space $H_{F,n}$ which is the completion  of ${\mathcal H}_{F,n}$ with respect to $(\cdot,\cdot)$.

We say that a $*$-representation $\pi: \mathrm{Pol}(\mathrm{Mat}_n)_q\to B(H) $ has a vacuum vector if there exists  a unit vector $v$ in $H$ such that $\pi(z_j^i)^*v=0$ for any $i,j=1,\ldots,n$; any such vector $v$ is called a {\it vacuum vector} for $\pi$. The Fock representation is the only (up to unitary equivalence) irreducible $*$-representation of $\mathrm{Pol}(\mathrm{Mat}_n)_q$  that possesses a vacuum vector (see \cite{ssv}).

\medskip

Let $C_F(\overline{\mathbb D}_n)_q$ be the $C^*$-subalgebra of $B(H_{F,n})$ generated by the operators $\pi_{F,n}(z_j^i)$, $i,j=1,\ldots,n$.

By \cite{ssv}, $\pi_{F,n}$ is a faithful irreducible representation of the $*$-algebra $\mathrm{Pol}(\mathrm{Mat}_n)_q$. When $n=1$ and $n=2$ it was proved in \cite{sam-prosk,vaksman-boundary} and \cite{pro_tur} that $\|\pi(a)\|\leq\|\pi_{F,n}(a)\|$ for any bounded $*$-representation of
$\mathrm{Pol}(\mathrm{Mat}_n)_q$ and $a\in \mathrm{Pol}(\mathrm{Mat}_n)_q$, giving that $C_F(\overline{\mathbb D}_n)_q$ is isomorphic to the universal enveloping $C^*$-algebra of $\mathrm{Pol}(\mathrm{Mat}_n)_q$.
Recently,  using, in particular, results from the present work, the second author proved in \cite{olof_preprint} that the statement holds for all values of  $n$.

In order to prove the faithfulness of the Fock representation of the $*$-algebra $\mathrm{Pol}(\mathrm{Mat}_n)_q$ the authors of \cite{ssv} gave an explicit construction of the Fock representation: for this they  embed first $\mathrm{Pol}({\mathrm{Mat}}_n)_q$ in a localization of $\mathbb C[SL_{2n}]_q$ with an involution and then use a concrete well understood $*$-representation of the latter $*$-algebra. The embedding is highly non-trivial that makes the construction very difficult to work with.
In this section we propose a new simpler construction of the Fock representation of $\mathrm{Pol}(\mathrm{Mat}_n)_q$ built out of a  representation of ${\mathbb C}[SU_{2n}]_q$.
Using this construction we shall derive a number of consequences concerning the $C^*$-algebra $C_F(\overline{\mathbb D}_n)_q$ and its $*$-representations.

Recall the $*$-representation $\Pi$ of $\mathbb C[SU_2]_q$  from the previous section and
let $C_q,S,d(q):\ell_2(\mathbb Z_+)\to\ell_2(\mathbb Z_+)$ be the operators defined as follows:
\begin{equation}\label{SC}
Se_n=e_{n+1},\quad C_qe_n=\sqrt{1-q^{2n}}e_n,\quad d(q)e_n=q^ne_n.
\end{equation}

We have $$\Pi(t_{1,1})=S^*C_q,\  \Pi(t_{1,2})=qd(q),\  \Pi(t_{2,1})=-d(q),\  \Pi(t_{2,2})=C_qS.$$

As in \cite{ssv} consider the element

\begin{equation}
u=\left(
\begin{array}{cccccccc}
1 & 2 & \dots & n & n+1 & n+2 & \dots & 2n\\
n+1 & n+2 & \dots & 2n & 1 & 2 & \dots & n
\end{array}
\right)
\end{equation}
of the symmetric group $S_{2n}$. This element is the product of $n$ cycles: $u=c_1\cdot c_2\ldots\cdot c_n$, where $c_{k}=s_{k+n-1}s_{k+n-2}\dots s_{k}$ and $s_k$ is the transposition $(k, k+1)$. The concatenation of the expressions for the cycles $c_k$  gives a reduced decomposition of $u=\sigma_1\sigma_2\ldots\sigma_{n^2}$, $\sigma_i\in\{s_1,\ldots,s_{2n-1}\}$.  Let
$$\pi_u=\pi_{\sigma_1}\otimes\pi_{\sigma_2}\otimes\ldots\pi_{\sigma_{n^2}}$$
be the corresponding representation of $\mathbb C[SU_{2n}]_q$ (see section \ref{suqn}).

Consider the map
$$\iota: z_j^i\mapsto q^{i-n}t_{n+i, n+j}\in{\mathbb C}[SU_{2n}]_q, 1\leq i,j\leq n,$$
defined on the generators $\{z_j^i:1\leq i,j\leq n\}$ of $\mathrm{Pol}(\mathrm{Mat}_n)_q$.
It admits a unique extension to a $*$-homomorphism $\iota: \mathrm{Pol}(\mathrm{Mat}_n)_q\to {\mathbb C}[SU_{2n}]_q$ as  the composition  of the embedding of $\mathrm{Pol}({\mathrm Mat}_n)_q$ into $\mathrm{Pol}(\mathrm{Mat}_{2n})_q$ that sends $z_j^i$ to $z_{j+n}^{i+n}$, $1\leq i,j\leq n$ (see Lemma \ref{pivarphi}), and the $*$-homomorphism  $\phi: \mathrm{Pol}(\mathrm{Mat}_{2n})_q\to \mathbb C[SU_{2n}]_q$, $z_j^i\mapsto q^{i-2n}t_{i,j}$ (see (\ref{hom})).

\begin{theorem}\label{Fock_via_suq2n} Let $\mathcal T:=\pi_u\circ\iota:   \mathrm{Pol}(\mathrm{Mat}_n)_q\to B(\ell^2(\mathbb Z_+)^{\otimes n^2})$.
Then $\mathcal T$ is unitary equivalent to the Fock representation $\pi_{F,n}$.
\end{theorem}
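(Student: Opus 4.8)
The plan is to identify the abstract Fock representation $\pi_{F,n}$ with the concrete representation $\mathcal T=\pi_u\circ\iota$ by exhibiting a vacuum vector for $\mathcal T$ and invoking the uniqueness statement from \cite{ssv}. Since $\pi_{F,n}$ is, up to unitary equivalence, the \emph{unique} irreducible $*$-representation of $\mathrm{Pol}(\mathrm{Mat}_n)_q$ possessing a vacuum vector, it suffices to prove that (i) $\mathcal T$ is irreducible and (ii) $\mathcal T$ has a vacuum vector, i.e. there is a unit vector $v\in\ell^2(\mathbb Z_+)^{\otimes n^2}$ with $\mathcal T(z_j^i)^*v=0$ for all $i,j=1,\ldots,n$. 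The natural candidate for $v$ is the distinguished vector $e_0^{\otimes n^2}$ (the joint lowest-weight vector for the shift operators), so the heart of the argument is step (ii).

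For step (ii) I would first unwind the definition of $\iota$: by construction $\iota(z_j^i)=q^{i-2n}t_{i,j}$ composed with the shift of indices, so $\mathcal T(z_j^i)=q^{i-2n}\,\pi_u(t_{n+i,n+j})$ up to the index relabeling built into $\iota$; concretely $\mathcal T(z_j^i)$ is a scalar multiple of $\pi_u$ evaluated at the matrix entry $t_{n+i,n+j}$ of $\mathbb C[SU_{2n}]_q$. Now $\pi_u=\pi_{\sigma_1}\otimes\cdots\otimes\pi_{\sigma_{n^2}}$ with the $\sigma_i$ coming from the reduced decomposition of $u=c_1\cdots c_n$, $c_k=s_{k+n-1}\cdots s_k$. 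Using $\Delta(t_{a,b})=\sum_c t_{a,c}\otimes t_{c,b}$ repeatedly, $\pi_u(t_{n+i,n+j})$ expands as a sum over paths through the tensor factors, where in each factor $\pi_{\sigma}$ we apply the $2\times 2$ block $\Pi$ (so the relevant entries are $S^*C_q,\ qd(q),\ -d(q),\ C_qS$) or the scalar $1$ (for the diagonal entries outside the support of $\sigma$). The key observation is that $\Pi(t_{2,2})^*e_0=(C_qS)^*e_0=S^*C_qe_0=0$ and $\Pi(t_{1,2})^*e_0=qd(q)e_0=qe_0$, $\Pi(t_{2,1})^*e_0=-d(q)e_0=-e_0$, $\Pi(t_{1,1})^*e_0=C_qSe_0=C_qe_1=\sqrt{1-q^2}\,e_1$; so on $e_0^{\otimes n^2}$ each tensor factor either kills the vector (when a $t_{2,2}^*=t_{k+1,k}$-type entry occurs) or acts by a scalar, except the $t_{1,1}^*$-entries which push $e_0\mapsto$ (scalar)$\,e_1$. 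One then checks, by tracking which matrix-entry pattern $\mathcal T(z_j^i)^*$ forces through the cycles $c_1,\ldots,c_n$, that every surviving term in $\mathcal T(z_j^i)^*e_0^{\otimes n^2}$ must route a lowering entry of the wrong type through some factor, producing $0$. This is essentially the same bookkeeping that underlies the classical fact $\phi(z_j^i)^\star$ annihilates the lowest-weight vector of $\pi_w$ for the longest word, transported through the embedding $\mathrm{Pol}(\mathrm{Mat}_n)_q\hookrightarrow\mathrm{Pol}(\mathrm{Mat}_{2n})_q\to\mathbb C[SU_{2n}]_q$.

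For step (i), irreducibility, I would invoke the Soibelman–Vaksman theory recalled in Section \ref{suqn}: since $u$ is given with an explicit reduced decomposition and $\pi_u$ is the associated tensor-product representation of $\mathbb C[SU_{2n}]_q$, it is irreducible as a representation of $\mathbb C[SU_{2n}]_q$. What is actually needed is that its restriction along $\iota$ to $\mathrm{Pol}(\mathrm{Mat}_n)_q$ remains irreducible; alternatively one bypasses this by noting that $\mathcal T$ restricted to the holomorphic part is generated (as a von Neumann algebra) by the image, and the cyclicity of $e_0^{\otimes n^2}$ for $\mathcal T(\mathrm{Pol}(\mathrm{Mat}_n)_q)$ together with the vacuum property already pins $\mathcal T$ down: the submodule generated by $e_0^{\otimes n^2}$ is a quotient of $\mathcal H_{F,n}$, hence carries the Fock form, hence is all of $\ell^2(\mathbb Z_+)^{\otimes n^2}$ by a dimension/character count in each graded component; and then $\mathcal T\cong\pi_{F,n}$ directly, without separately proving irreducibility. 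I would present the argument in this second form, so the logical skeleton is: exhibit the vacuum vector $e_0^{\otimes n^2}$; deduce a surjective intertwiner $H_{F,n}\to \overline{\mathcal T(\mathrm{Pol}(\mathrm{Mat}_n)_q)e_0^{\otimes n^2}}\subseteq \ell^2(\mathbb Z_+)^{\otimes n^2}$; show it is isometric because the Fock form is the unique invariant form with $(v_0,v_0)=1$ and the right-hand inner products match; show it is onto because both sides have the same graded dimensions (the number of admissible reduced-word monomials equals $\#\{\text{basis monomials of }\mathbb C[\mathrm{Mat}_n]_q\text{ of given degree}\}$, $n^2$ being exactly $\ell(u)$). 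The main obstacle is the explicit computation verifying $\mathcal T(z_j^i)^*e_0^{\otimes n^2}=0$ for every $i,j$: it requires a careful combinatorial analysis of how the matrix coefficient $t_{n+i,n+j}$ of $\mathbb C[SU_{2n}]_q$ decomposes across the $n$ cycles $c_k$ under iterated comultiplication, and showing that no path avoids an annihilating $\Pi(t_{2,2})^*$. A clean way to organize this is to prove it first for one cycle $c_k$ (a $\mathbb C[SU_{n+1}]_q$-type computation) and then assemble the $n$ cycles, using that the $z^i_j$ with fixed $i$ involve only a ``diagonal band'' of entries $t_{n+i,\,n+j}$.
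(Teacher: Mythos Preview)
Your overall strategy—exhibit $e_0^{\otimes n^2}$ as a vacuum vector, show it is cyclic, and invoke the uniqueness result from \cite{ssv}—is exactly the route the paper takes. Your sketch of the vacuum-vector verification is also correct in spirit: one shows that every nonzero term in the expansion of $\pi_u(t_{n+i,n+j})$ contains at least one tensor factor $\Pi(t_{2,2})=C_qS$, whose adjoint annihilates $e_0$. The paper organizes this bookkeeping via box diagrams (Lemma~\ref{vaccum_vector}), which makes the ``every path contains a right-hook'' fact transparent; your proposed cycle-by-cycle analysis would reach the same conclusion with more notation.

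The gap is in your surjectivity argument. The paper does \emph{not} use a dimension count; it proves directly (Lemma~\ref{density}) that suitably ordered monomials $X_{n^2}^{m_{n^2}}\cdots X_1^{m_1}$ applied to $e_0^{\otimes n^2}$ produce nonzero multiples of \emph{every} basis vector $e_{m_1}\otimes\cdots\otimes e_{m_{n^2}}$, by isolating in each $\mathcal T(z_k^j)$ the unique term that contains no $S^*C_q$ factor. Your dimension-count argument would work only if you first verify that each $\mathcal T(z_k^j)$ raises the total degree $\sum_i m_i$ by exactly~$+1$, so that the intertwiner $H_{F,n}\to\ell^2(\mathbb Z_+)^{\otimes n^2}$ respects the two gradings. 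This is in fact true—every admissible path from $(n,j)$ to $(k,n)$ has exactly one more $C_qS$-box than $S^*C_q$-box, since the path starts vertically and ends horizontally—but it is a combinatorial lemma you neither state nor prove, and establishing it requires the very path analysis the paper already carries out. Without it, an isometric embedding between two separable infinite-dimensional Hilbert spaces says nothing about surjectivity, and your parenthetical remark that $n^2=\ell(u)$ does not substitute for it. Your alternative route via irreducibility has the difficulty you yourself flag: irreducibility of $\pi_u$ over $\mathbb C[SU_{2n}]_q$ does not automatically descend along the non-surjective map $\iota$.
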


In order to understand the action of $\mathcal T$ we shall associate to it a collection of square box diagrams with directed routes in the following way.

For  $1\leq j,k\leq n$ we have
\begin{equation}\label{rep_T}
\mathcal T(z^j_k)=q^{j-n}\sum_{k_{1},\dots,k_{n^{2}-1}=1}^{2n}\pi_{\sigma_{1}}(t_{n+j,k_{1}})\otimes \dots \otimes \pi_{\sigma_{n^2}}(t_{k_{n^{2}-1},n+k}).
\end{equation}
Recall from  section \ref{suqn} that the only non-zero factors in (\ref{rep_T}) are
\begin{eqnarray*}
&&\pi_{s_i}(t_{i,i+1})=qd(q),\ \pi_{s_{i}}(t_{i+1,i})=-d(q),\ \pi_{s_i}(t_{i,i})=S^*C_q,\\
 &&\pi_{s_i}(t_{i+1,i+1})=C_qS,\ \pi_{s_i}(t_{k,k})=I, k\ne i, i+1,
\end{eqnarray*}
where $d(q),C_q,S:\ell^2(\mathbb Z_+)\to \ell^2(\mathbb Z_+)$ are given by (\ref{SC}) and $I$ stands for the identity operator on $\ell^2(\mathbb Z_+)$.

To each non-zero term  $\pi_{\sigma_{1}}(t_{n+j,k_{1}})\otimes \dots \otimes \pi_{\sigma_{n^2}}(t_{k_{n^{2}-1},n+k})$ in (\ref{rep_T}) we associate  a square tableau consisting of $n^2$ equal boxes. Each box will represent a factor in the term. The non-zero factors $\pi_{s_i}(t_{i,i+1})$, $\pi_{s_{i}}(t_{i+1,i})$, $\pi_{s_i}(t_{i,i})$, $\pi_{s_i}(t_{i+1,i+1})$, $\pi_{s_i}(t_{k,k})$, $k\ne i$, $i+1$, will be represented by the following boxes with arrows:

\begin{eqnarray}
\pi_{s_i}(t_{i,i+1})&\leadsto& \begin{tikzpicture}[line width=0.21mm,scale=0.3]
\draw (1,0) -- (0,0) -- (0,1)--(1,1)--(1,0)--(0,0);
\draw [->] (0.1,0.5)--(0.9,0.5);
\end{tikzpicture}\label{arrow_boxes1}\\
\pi_{s_{i}}(t_{i+1,i})&\leadsto&\begin{tikzpicture}[line width=0.21mm,scale=0.3]
\draw (1,0) -- (0,0) -- (0,1)--(1,1)--(1,0)--(0,0);
\draw [->] (1/2,0.1)--(1/2,0.9);
\end{tikzpicture}\label{arrow_boxes2}\\
\pi_{s_i}(t_{i,i})&\leadsto&\begin{tikzpicture}[line width=0.21mm,scale=0.3]
\draw (1,0) -- (0,0) -- (0,1)--(1,1)--(1,0)--(0,0);
\draw [->] (0.1,0.5)--(0.5,0.5)--(0.5,0.9);
\end{tikzpicture}\label{arrow_boxes3}\\
\pi_{s_i}(t_{i+1,i+1})&\leadsto&\begin{tikzpicture}[line width=0.21mm,scale=0.3]
\draw (1,0) -- (0,0) -- (0,1)--(1,1)--(1,0)--(0,0);
\draw [->] (0.5,0.1)--(0.5,0.5)--(0.9,0.5);
\end{tikzpicture}\label{arrow_boxes4}\\
\pi_{s_i}(t_{k,k})&\leadsto&
\begin{tikzpicture}[line width=0.21mm,scale=0.3]
\draw (1,0) -- (0,0) -- (0,1)--(1,1)--(1,0)--(0,0);
\end{tikzpicture}\label{arrow_boxes5}
\end{eqnarray}
 the $i$-th column of the tableau will correspond to the part related to the cycle $c_i=s_{i+n-1}s_{i+n-2}\ldots s_i$ in the decomposition $u=c_1\cdot c_2\cdot\ldots\cdot c_n$, i.e.
 \begin{eqnarray*}
 &&\pi_{\sigma_{n(i-1)+1}}(t_{k_{n(i-1)},k_{n(i-1)+1}})\otimes\ldots\otimes\pi_{\sigma_{ni}}(t_{k_{n(i-1)},k_{n(i-1)+1}})\\
 &&=\pi_{s_{i+n-1}}(t_{k_{n(i-1)},k_{n(i-1)+1}})\otimes\ldots\otimes\pi_{s_i}(t_{k_{n(i-1)},k_{n(i-1)+1}})
 \end{eqnarray*}
where the reading from the left to the right in the tensor product will correspond to the moving up along the column. For instance, if the part of a non-zero term corresponding to the first cycle is given by $$\pi_{s_n}(t_{n+1,n})\otimes \pi_{s_{n-1}}(t_{n,n-1})\otimes\ldots\otimes \pi_{s_{k}}(t_{k+1,k})\otimes\pi_{s_{k-1}}(t_{k,k})\otimes\pi_{s_{k-2}}(t_{k,k})\otimes\ldots\otimes\pi_{s_1}(t_{k,k})$$ we obtain the column:

$$
\begin{tikzpicture}[thick,scale=0.5]
\draw (0,3) -- (0,4)--(1,4)--(1,3);
\draw (0,2) -- (0,3)--(1,3)--(1,2);
\draw (1,1) -- (0,1) -- (0,2)--(1,2)--(1,1)--(0,1);
\draw (1,0) -- (0,0) -- (0,1)--(1,1)--(1,0)--(0,0);
\draw [->] (0.5,1.05)--(0.5,1.5)--(0.95,1.5);
\draw [->] (1/2,0.6)--(1/2,0.95);
\draw (1/2,0.3)--(1/2,0.5);
\draw (1/2,0)--(1/2,0.2);
\draw (0,0) -- (0,-1)--(1,-1)--(1,0);
\draw [->] (1/2,0.05-1)--(1/2,0.95-1);
\draw [->] (1/2,0.05-2)--(1/2,0.95-2);
\draw (0,0-1) -- (0,-1-1)--(1,-1-1)--(1,0-1);
\node at (-1,0.66) {\vdots};
\node at (-2,1.46) {$\pi_{s_{k-1}}(t_{k,k})$};
\node at (-1,2.66) {\vdots};
\node at (-1.9,3.46) {$\pi_{s_{1}}(t_{k,k})$};
\node at (-2.3,-0.46) {$\pi_{s_{n-1}}(t_{n,n-1})$};
\node at (-2.1,-1.46) {$\pi_{s_{n}}(t_{n+1,n})$};
\end{tikzpicture}
$$
It is not difficult to see that following the above algorithm  applied to a non-zero term  of (\ref{rep_T}) we obtain  in the corresponding $n$ by $n$  box tableau  a path, called {\it admissible} and  built out of arrow boxes (\ref{arrow_boxes1})-(\ref{arrow_boxes5}) with the start and end positions $(n,j)$ and $(k,n)$, respectively.
In fact, as  $\pi_{s_l}(t_{m,p})=\delta_{m,p}I$ for $n\geq m>l+1$, we obtain that the elementary tensor
\begin{eqnarray*}
&&\pi_{\sigma_{1}}(t_{n+j,k_{1}})\otimes \dots \otimes \pi_{\sigma_{n(j-1)+1}}(t_{k_{n(j-1)},k_{n(j-1)+1}})=\\
&&\pi_{s_n}(t_{n+j,k_1})\otimes\ldots\otimes \pi_{s_1}(t_{k_{n-1},k_n})\otimes\pi_{s_{n+1}}(t_{k_{n+1},k_{n+2}})\otimes\ldots\otimes\pi_{s_{n+j-1}}(t_{k_{n(j-1)},k_{n(j-1)+1}})
\end{eqnarray*}
is non-zero if and only if
$n+j=k_1=k_2=\ldots =k_{n(j-1)}$ and $k_{n(j-1)+1}$ is either $n+j-1$ or $n+j$ so that  the last factor is either $\pi_{s_{n+j-1}}(t_{n+j, n+j-1})$ or $\pi_{s_{n+j-1}}(t_{n+j,n+j})$ while the others are the identity operators. Hence the first arrow in the diagrams corresponding to a summand in ${\mathcal T}(z_k^j)$ occurs at the position $(n,j)$ and it  is either $\begin{tikzpicture}[line width=0.21mm,scale=0.3]
\draw (1,0) -- (0,0) -- (0,1)--(1,1)--(1,0)--(0,0);
\draw [->] (1/2,0.1)--(1/2,0.9);
\end{tikzpicture}$ or $\begin{tikzpicture}[line width=0.21mm,scale=0.3]
\draw (1,0) -- (0,0) -- (0,1)--(1,1)--(1,0)--(0,0);
\draw [->] (0.5,0.1)--(0.5,0.5)--(0.9,0.5);
\end{tikzpicture}$, respectively.
Similarly, we observe that
\begin{eqnarray*}
&&\pi_{\sigma_{n^2-k+1}}(t_{k_{n^2-k},k_{n^2-k+1}})\otimes \dots \otimes \pi_{\sigma_{n^2}}(t_{k_{n^2-1},n+k})\\
&&=\pi_{s_{n+k-1}}(t_{k_{n^2-k},k_{n^2-k+1}})\otimes\ldots\otimes\pi_{s_{n}}(t_{k_{n^2-1},n+k})
\end{eqnarray*}
is non-zero if and only if $n+k=k_{n^2-1}=\ldots =k_{n^2-k+1}$ and $k_{n^2-k}$ is either $n+k$ or $n+k-1$ so that the first factor is either $\pi_{s_{n+k-1}}(t_{n+k,n+k})$ or $\pi_{s_{n+k-1}}(t_{n+k-1,n+k})$ and the others are the identity operators. Hence the last arrow in the diagrams corresponding to a summand in ${\mathcal T}(z_k^j)$ occurs at the position $(k,n)$  and it is either $\begin{tikzpicture}[line width=0.21mm,scale=0.3]
\draw (1,0) -- (0,0) -- (0,1)--(1,1)--(1,0)--(0,0);
\draw [->] (0.5,0.1)--(0.5,0.5)--(0.9,0.5);
\end{tikzpicture}$ or $\begin{tikzpicture}[line width=0.21mm,scale=0.3]
\draw (1,0) -- (0,0) -- (0,1)--(1,1)--(1,0)--(0,0);
\draw [->] (0.1,0.5)--(0.9,0.5);
\end{tikzpicture}$, respectively.
With a similar analysis it is not difficult to convince oneself that one gets a connected path from $(n,j)$ to $(k,n)$; the details are left to the reader.

We have, in particular,  that for $n=3$ and $j=k=1$ the diagrams representing the non-zero terms in $\mathcal T(z_1^1)$ are given by

$$
\begin{array}{cccccc}
\begin{tikzpicture}[thick,scale=0.5]
\draw (1,0) -- (0,0) -- (0,1)--(1,1)--(1,0)--(0,0);
\draw (1,1) -- (0,1) -- (0,2)--(1,2)--(1,1)--(0,1);
\draw (1,2) -- (0,2) -- (0,3)--(1,3)--(1,2)--(0,2);
\draw (2,0) -- (1,0) -- (1,1)--(2,1)--(2,0)--(1,0);
\draw (2,1) -- (1,1) -- (1,2)--(2,2)--(2,1)--(1,1);
\draw (2,2) -- (1,2) -- (1,3)--(2,3)--(2,2)--(1,2);
\draw (3,0) -- (2,0) -- (2,1)--(3,1)--(3,0)--(2,0);
\draw (3,1) -- (2,1) -- (2,2)--(3,2)--(3,1)--(2,1);
\draw (3,2) -- (2,2) -- (2,3)--(3,3)--(3,2)--(2,2);
\draw [->] (1/2,0.05)--(1/2,0.95);\draw [->] (1/2,0.05+1)--(1/2,0.95+1);\draw [->] (0.5,0.05+2)--(0.5,0.5+2)--(0.95,0.5+2);\draw [->] (0.05+1,0.5+2)--(0.95+1,0.5+2);\draw [->] (0.05+2,0.5+2)--(0.95+2,0.5+2);
\end{tikzpicture}
&
\begin{tikzpicture}[thick,scale=0.5]
\draw (1,0) -- (0,0) -- (0,1)--(1,1)--(1,0)--(0,0);
\draw (1,1) -- (0,1) -- (0,2)--(1,2)--(1,1)--(0,1);
\draw (1,2) -- (0,2) -- (0,3)--(1,3)--(1,2)--(0,2);
\draw (2,0) -- (1,0) -- (1,1)--(2,1)--(2,0)--(1,0);
\draw (2,1) -- (1,1) -- (1,2)--(2,2)--(2,1)--(1,1);
\draw (2,2) -- (1,2) -- (1,3)--(2,3)--(2,2)--(1,2);
\draw (3,0) -- (2,0) -- (2,1)--(3,1)--(3,0)--(2,0);
\draw (3,1) -- (2,1) -- (2,2)--(3,2)--(3,1)--(2,1);
\draw (3,2) -- (2,2) -- (2,3)--(3,3)--(3,2)--(2,2);
\draw [->] (1/2,0.05)--(1/2,0.95);\draw [->] (0.5,0.05+1)--(0.5,0.5+1)--(0.95,0.5+1);\draw [->] (0.05+1,0.5+1)--(0.5+1,0.5+1)--(0.5+1,0.95+1);\draw [->] (0.5+1,0.05+2)--(0.5+1,0.5+2)--(0.95+1,0.5+2);\draw [->] (0.05+2,0.5+2)--(0.95+2,0.5+2);
\end{tikzpicture}
&
\begin{tikzpicture}[thick,scale=0.5]
\draw (1,0) -- (0,0) -- (0,1)--(1,1)--(1,0)--(0,0);
\draw (1,1) -- (0,1) -- (0,2)--(1,2)--(1,1)--(0,1);
\draw (1,2) -- (0,2) -- (0,3)--(1,3)--(1,2)--(0,2);
\draw (2,0) -- (1,0) -- (1,1)--(2,1)--(2,0)--(1,0);
\draw (2,1) -- (1,1) -- (1,2)--(2,2)--(2,1)--(1,1);
\draw (2,2) -- (1,2) -- (1,3)--(2,3)--(2,2)--(1,2);
\draw (3,0) -- (2,0) -- (2,1)--(3,1)--(3,0)--(2,0);
\draw (3,1) -- (2,1) -- (2,2)--(3,2)--(3,1)--(2,1);
\draw (3,2) -- (2,2) -- (2,3)--(3,3)--(3,2)--(2,2);
\draw [->] (0.5,0.05)--(0.5,0.5)--(0.95,0.5);\draw [->] (0.05+1,0.5)--(0.5+1,0.5)--(0.5+1,0.95);\draw [->] (1/2+1,0.05+1)--(1/2+1,0.95+1);\draw [->] (0.5+1,0.05+2)--(0.5+1,0.5+2)--(0.95+1,0.5+2);\draw [->] (0.05+2,0.5+2)--(0.95+2,0.5+2);
\end{tikzpicture}
&
\begin{tikzpicture}[thick,scale=0.5]
\draw (1,0) -- (0,0) -- (0,1)--(1,1)--(1,0)--(0,0);
\draw (1,1) -- (0,1) -- (0,2)--(1,2)--(1,1)--(0,1);
\draw (1,2) -- (0,2) -- (0,3)--(1,3)--(1,2)--(0,2);
\draw (2,0) -- (1,0) -- (1,1)--(2,1)--(2,0)--(1,0);
\draw (2,1) -- (1,1) -- (1,2)--(2,2)--(2,1)--(1,1);
\draw (2,2) -- (1,2) -- (1,3)--(2,3)--(2,2)--(1,2);
\draw (3,0) -- (2,0) -- (2,1)--(3,1)--(3,0)--(2,0);
\draw (3,1) -- (2,1) -- (2,2)--(3,2)--(3,1)--(2,1);
\draw (3,2) -- (2,2) -- (2,3)--(3,3)--(3,2)--(2,2);
\draw [->] (1/2,0.05)--(1/2,0.95);\draw [->] (0.5,0.05+1)--(0.5,0.5+1)--(0.95,0.5+1);\draw [->] (0.05+2,0.5+1)--(0.5+2,0.5+1)--(0.5+2,0.95+1);\draw [->] (0.5+2,0.05+2)--(0.5+2,0.5+2)--(0.95+2,0.5+2);\draw [->] (0.05+1,0.5+1)--(0.95+1,0.5+1);
\end{tikzpicture}
&
\begin{tikzpicture}[thick,scale=0.5]
\draw (1,0) -- (0,0) -- (0,1)--(1,1)--(1,0)--(0,0);
\draw (1,1) -- (0,1) -- (0,2)--(1,2)--(1,1)--(0,1);
\draw (1,2) -- (0,2) -- (0,3)--(1,3)--(1,2)--(0,2);
\draw (2,0) -- (1,0) -- (1,1)--(2,1)--(2,0)--(1,0);
\draw (2,1) -- (1,1) -- (1,2)--(2,2)--(2,1)--(1,1);
\draw (2,2) -- (1,2) -- (1,3)--(2,3)--(2,2)--(1,2);
\draw (3,0) -- (2,0) -- (2,1)--(3,1)--(3,0)--(2,0);
\draw (3,1) -- (2,1) -- (2,2)--(3,2)--(3,1)--(2,1);
\draw (3,2) -- (2,2) -- (2,3)--(3,3)--(3,2)--(2,2);
\draw [->] (0.5,0.05)--(0.5,0.5)--(0.95,0.5);\draw [->] (0.05+1,0.5)--(0.5+1,0.5)--(0.5+1,0.95);\draw [->] (0.5+1,0.05+1)--(0.5+1,0.5+1)--(0.95+1,0.5+1);\draw [->] (0.05+2,0.5+1)--(0.5+2,0.5+1)--(0.5+2,0.95+1);\draw [->] (0.5+2,0.05+2)--(0.5+2,0.5+2)--(0.95+2,0.5+2);
\end{tikzpicture}
&
\begin{tikzpicture}[thick,scale=0.5]
\draw (1,0) -- (0,0) -- (0,1)--(1,1)--(1,0)--(0,0);
\draw (1,1) -- (0,1) -- (0,2)--(1,2)--(1,1)--(0,1);
\draw (1,2) -- (0,2) -- (0,3)--(1,3)--(1,2)--(0,2);
\draw (2,0) -- (1,0) -- (1,1)--(2,1)--(2,0)--(1,0);
\draw (2,1) -- (1,1) -- (1,2)--(2,2)--(2,1)--(1,1);
\draw (2,2) -- (1,2) -- (1,3)--(2,3)--(2,2)--(1,2);
\draw (3,0) -- (2,0) -- (2,1)--(3,1)--(3,0)--(2,0);
\draw (3,1) -- (2,1) -- (2,2)--(3,2)--(3,1)--(2,1);
\draw (3,2) -- (2,2) -- (2,3)--(3,3)--(3,2)--(2,2);
\draw [->] (1/2+2,0.05+1)--(1/2+2,0.95+1);\draw [->] (0.5,0.05)--(0.5,0.5)--(0.95,0.5);\draw [->] (0.05+2,0.5)--(0.5+2,0.5)--(0.5+2,0.95);\draw [->] (0.5+2,0.05+2)--(0.5+2,0.5+2)--(0.95+2,0.5+2);\draw [->] (0.05+1,0.5)--(0.95+1,0.5);
\end{tikzpicture}\\
\end{array}
$$
The term corresponding,  for example, to the first diagram can be easily recovered as $d(q)\otimes d(q)\otimes C_qS\otimes I\otimes I\otimes d(q)\otimes I\otimes I\otimes  d(q)$.

With the diagram approach the following result becomes evident.

\begin{lemma}\label{vaccum_vector}
The vector $v_0=e_0\otimes\ldots\otimes e_0\in \ell^2(\mathbb Z_+)^{\otimes n^2}$ is a vacuum vector for the $*$-representation $\mathcal T$.
\end{lemma}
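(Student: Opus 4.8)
The plan is to prove the lemma by showing directly that $\mathcal{T}(z_j^i)^* v_0 = 0$ for all $i,j = 1,\ldots,n$, which is precisely the condition for $v_0$ to be a vacuum vector. The starting point is the diagram expansion \eqref{rep_T}: $\mathcal{T}(z_j^i)$ is a finite sum of elementary tensors $A_1\otimes\cdots\otimes A_{n^2}$, one for each admissible path in the $n\times n$ box tableau, where the factor $A_l$ is, up to a real scalar, one of $I$, $d(q)$, $S^*C_q$ or $C_qS$ according to whether the $l$-th box of the path is empty \eqref{arrow_boxes5}, a through-box \eqref{arrow_boxes1} or \eqref{arrow_boxes2}, the corner box \eqref{arrow_boxes3}, or the corner box \eqref{arrow_boxes4}. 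Taking adjoints and using $d(q)^*=d(q)$, $C_q^*=C_q$, $(S^*C_q)^*=C_qS$ and $(C_qS)^*=S^*C_q$, every elementary tensor occurring in $\mathcal{T}(z_j^i)^*$ has, again up to a nonzero scalar, the form $A_1^*\otimes\cdots\otimes A_{n^2}^*$, with the factors $S^*C_q$ and $C_qS$ swapped relative to the corresponding path.

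Evaluating such a tensor on $v_0=e_0\otimes\cdots\otimes e_0$ factor by factor, we have $Ie_0=e_0$, $d(q)e_0=e_0$ and $C_qSe_0=\sqrt{1-q^2}\,e_1$, while $C_qe_0=0$ and hence $S^*C_qe_0=0$. Therefore a given summand of $\mathcal{T}(z_j^i)^* v_0$ vanishes as soon as one of its factors equals $S^*C_q$, that is, as soon as the path it comes from passes through a box of type \eqref{arrow_boxes4} (whose operator is $C_qS$). So the whole statement reduces to the combinatorial claim that \emph{every} admissible path occurring in \eqref{rep_T} contains at least one box of type \eqref{arrow_boxes4}.

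To prove this I would use the description of admissible paths given just before the lemma: such a path enters the tableau from below (at the bottom of column $i$) and, at its other endpoint, exits to the right (out of column $n$), and all of its boxes are among \eqref{arrow_boxes1}--\eqref{arrow_boxes4}. Let $b$ be the first box along the path whose exit side is the right-hand side; such a $b$ exists, since the last box of the path has this property. The only boxes with exit side ``right'' are \eqref{arrow_boxes1}, which is entered from the left, and \eqref{arrow_boxes4}, which is entered from below. If $b$ were of type \eqref{arrow_boxes1} it would be entered from the left; it could not be the very first box of the path (that one is entered from below), so it would have a predecessor on the path, and that predecessor would itself exit to the right, contradicting the minimality in the choice of $b$. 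Hence $b$ is of type \eqref{arrow_boxes4}, which proves the claim; together with the second paragraph this gives $\mathcal{T}(z_j^i)^* v_0 = 0$ for all $i,j$, i.e. the lemma.

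The only non-routine ingredient is this last combinatorial step, and the single point requiring care is the use of the facts — established in the analysis preceding the lemma — that an admissible path genuinely enters from below, exits to the right, and consists only of boxes of types \eqref{arrow_boxes1}--\eqref{arrow_boxes4}; granting these, the path is forced through a box \eqref{arrow_boxes4}. (The degenerate path for $i=j=n$, consisting of the single box \eqref{arrow_boxes4}, is consistent with this, and for $n=3$, $i=j=1$ one already sees the required box in each of the six diagrams displayed above.)
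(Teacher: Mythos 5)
Your proposal is correct and follows essentially the same route as the paper: the paper's proof is exactly the observation that every admissible route must contain the right-hook box \eqref{arrow_boxes4}, whose factor $C_qS$ satisfies $(C_qS)^*e_0=S^*C_qe_0=0$. The only difference is that you supply an explicit combinatorial argument (first box exiting to the right must be of type \eqref{arrow_boxes4}) for what the paper calls a ``simple observation'', and that argument is sound.
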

 \begin{proof}
 The statement follows from the simple observation that any route corresponding to a non-zero term in (\ref{rep_T}) must contain the right hook arrow box (\ref{arrow_boxes4}) that represents the factor $C_qS$ in the term.  Recalling that $(C_qS)^*e_0=0$, we get the claim.
 \end{proof}

To proceed to the next claim, that can  also be easily derived from the associated square box diagrams, we enumerate the boxes in $n$ by $n$ tableaux by numbers from $1$ to $n^2$  counting the boxes  in each column from the bottom up and moving gradually from the first column to the last one, i.e. we have the following ordering for the box positions in the tableaux:
 \begin{eqnarray*}
 (n,1)&<&(n-1,1)<\ldots<(1,1)<(n,2)<(n-1,2)\ldots<(1,2)<\ldots\\
 &<&(n,n)<(n-1,n)<\ldots<(1,n)
 \end{eqnarray*}
 Let $h(k,l)$ be the number in $\{1,\ldots,n^2\}$ corresponding to the box in the position $(k,l)$. We have $h(k,l)=n(l-1)+n-k+1$ although we will not use this expression.

 \begin{lemma}\label{density}
 The subspace $\{\mathcal T(a)v_0: a\in \mathrm{Pol}(\mathrm{Mat}_n)_q\}$ is dense in $\ell^2(\mathbb Z_+)^{\otimes n^2}$.
 \end{lemma}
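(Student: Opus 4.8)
\emph{Proof proposal.} The plan is to reduce the statement to a chain of finite-dimensional inclusions. For $\mathbf m=(m_{(k,l)})_{1\le k,l\le n}\in\mathbb Z_+^{n^2}$ write $e_{\mathbf m}$ for the corresponding elementary tensor of $\ell^2(\mathbb Z_+)^{\otimes n^2}$ (factors ordered by $h$), put $|\mathbf m|=\sum_{k,l}m_{(k,l)}$ and $H_N=\overline{\mathrm{span}}\{e_{\mathbf m}:|\mathbf m|=N\}$, so that $\ell^2(\mathbb Z_+)^{\otimes n^2}=\bigoplus_{N\ge0}H_N$ as a Hilbert-space direct sum. Writing $V$ for the closed linear span of $\{\mathcal T(a)v_0:a\in\mathrm{Pol}(\mathrm{Mat}_n)_q\}$, it suffices to prove $H_N\subseteq V$ for all $N$. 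Note that $V$ is invariant under every $\mathcal T(b)$, since $\mathcal T(b)\mathcal T(a)v_0=\mathcal T(ba)v_0$ and $\mathcal T(b)$ is bounded, and that $v_0=e_{\mathbf 0}\in V$. The proof of $H_N\subseteq V$ will be an induction on $N$ resting on two facts about the generators, both read off from the box diagrams introduced before the lemma: a grading property and a triangularity property.

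\emph{Grading.} Every non-zero summand of $\mathcal T(z_k^j)$ in \eqref{rep_T} corresponds to a connected, monotone ``up-and-to-the-right'' lattice path of arrow boxes from box $(n,j)$ to box $(k,n)$; the straight boxes carry the diagonal operators $qd(q),-d(q)$, the up-then-right corner \eqref{arrow_boxes4} carries $C_qS$, and the left-then-up corner \eqref{arrow_boxes3} carries $S^*C_q$. Since $C_qS$ raises a basis index by one, $S^*C_q$ lowers it by one (or annihilates $e_0$), and the remaining boxes leave indices unchanged, a route that does not get killed shifts $|\mathbf m|$ by $(\#\ C_qS\text{-corners})-(\#\ S^*C_q\text{-corners})$; since such a path enters $(n,j)$ from below and leaves $(k,n)$ to the right, these two counts differ by exactly $1$. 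Hence $\mathcal T(z_k^j)H_N\subseteq H_{N+1}$ for every $N$.

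\emph{Triangularity.} Order $\mathbb Z_+^{n^2}$ by declaring $\mathbf m\prec\mathbf m'$ when the word obtained by listing the entries of column $1$ from the top down, then those of column $2$, \dots, then those of column $n$, is lexicographically smaller for $\mathbf m$ than for $\mathbf m'$. Let $\varepsilon_{(k,j)}$ be the filling equal to $1$ at $(k,j)$ and $0$ elsewhere. I claim that for every box $(k,j)$ and every $\mathbf m$,
$$\mathcal T(z_k^j)e_{\mathbf m}=c_{k,j,\mathbf m}\,e_{\mathbf m+\varepsilon_{(k,j)}}+\sum_{\mathbf m'\prec\mathbf m+\varepsilon_{(k,j)}}(\,\cdot\,)\,e_{\mathbf m'},\qquad c_{k,j,\mathbf m}\neq0 .$$
Indeed, $\mathcal T(z_k^j)e_{\mathbf m}=\sum_R(\text{scalar})\,e_{\mathbf m+\Delta_R}$ over the routes $R$ surviving on $e_{\mathbf m}$, where $\Delta_R$ puts $+1$ at each $C_qS$-corner and $-1$ at each $S^*C_q$-corner of $R$. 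Exactly one route $R_0$ has no $S^*C_q$-corner: the one running straight up column $j$, turning at $(k,j)$, then running straight to $(k,n)$; it always survives, $\Delta_{R_0}=\varepsilon_{(k,j)}$, and its scalar is a non-empty product of non-zero factors (the corner at $(k,j)$ contributes $\sqrt{1-q^{2(m_{(k,j)}+1)}}$), so this is the leading term. Any other path runs straight up column $j$ to its first corner, necessarily a $C_qS$-corner at some $(r_1,j)$ with $r_1>k$ (for $r_1=k$ would force the path to coincide with $R_0$), and it never returns to column $j$ nor visits a column $<j$; hence $\Delta_R$ is supported in columns $\ge j$ and its restriction to column $j$ is a single $+1$ at row $r_1>k$. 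Read in the order above, $\mathbf m+\Delta_R$ and $\mathbf m+\varepsilon_{(k,j)}$ first disagree at box $(k,j)$, where the former has entry $m_{(k,j)}$ and the latter $m_{(k,j)}+1$; thus $\mathbf m+\Delta_R\prec\mathbf m+\varepsilon_{(k,j)}$, proving the claim.

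\emph{Conclusion.} We have $H_0=\mathbb C v_0\subseteq V$. Assuming $H_N\subseteq V$, list the finitely many compositions $\mathbf m'$ with $|\mathbf m'|=N+1$ in $\prec$-increasing order and induct along it: given $\mathbf m'$, choose a box $(k,j)$ with $m'_{(k,j)}\ge1$ and set $\mathbf m=\mathbf m'-\varepsilon_{(k,j)}$; then $e_{\mathbf m}\in H_N\subseteq V$, so $\mathcal T(z_k^j)e_{\mathbf m}\in V$, and by the two facts above this vector equals $c_{k,j,\mathbf m}\,e_{\mathbf m'}$ plus a combination of $e_{\mathbf m''}$ with $|\mathbf m''|=N+1$ and $\mathbf m''\prec\mathbf m'$, all of which lie in $V$ by the induction along $\prec$ (the $\prec$-least composition, with all weight at the box $(n,n)$ read last in the word, has no such lower terms). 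Hence $e_{\mathbf m'}\in V$ and $H_{N+1}\subseteq V$. Therefore $V\supseteq\bigoplus_N H_N$, so $V=\ell^2(\mathbb Z_+)^{\otimes n^2}$. The step I expect to demand the most care is the triangularity claim, since it requires an accurate description of the shape of an arbitrary admissible route and of the positions of its two kinds of corners relative to $(k,j)$, so that $\prec$ genuinely makes $e_{\mathbf m+\varepsilon_{(k,j)}}$ the leading term; the remaining arguments are routine.
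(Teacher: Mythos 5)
Your proof is correct; I checked the two key claims. The grading claim holds because an admissible route enters $(n,j)$ moving upward and leaves $(k,n)$ moving rightward, so its up-to-right corners (the $C_qS$ boxes) outnumber its right-to-up corners (the $S^*C_q$ boxes) by exactly one. The triangularity claim holds because monotonicity of the routes forces the first corner of any route other than $R_0$ to be a $C_qS$ box at $(r_1,j)$ with $r_1>k$ and confines all remaining corners to columns $>j$, which is exactly what makes $e_{\mathbf m+\varepsilon_{(k,j)}}$ the unique $\prec$-leading term with a manifestly non-zero coefficient.

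The argument is, however, organized differently from the paper's. Both proofs hinge on the same diagrammatic facts — the unique admissible route from $(n,j)$ to $(k,n)$ containing no upper-hook ($S^*C_q$) box, and the location of the upper-hook boxes of all other routes relative to $(k,j)$ — but the paper exploits its enumeration $h$ of the boxes (bottom-up within columns, columns left to right) and applies the generators $X_{h(k,j)}=\mathcal T(z_k^j)$ to $v_0$ in exactly that order. At each step the vector still has $e_0$ in every slot of index greater than the current one, so every route except the distinguished one is annihilated outright and one obtains the exact identity $X_{n^2}^{m_{n^2}}\cdots X_1^{m_1}v_0=\beta\,e_{m_1}\otimes\cdots\otimes e_{m_{n^2}}$ with $\beta\neq0$; this immediately gives density and, as the paper's subsequent corollary records, exhibits these vectors as an orthogonal basis. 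You instead keep the lower-order terms and dispose of them by a double induction over total degree and a lexicographic monomial order. This costs a more careful analysis of the shape of an arbitrary route (rather than only of what survives on vectors with an $e_0$-tail), but it buys a genuine ``leading term'' description of the action of each generator on every basis vector, which is a statement of independent interest; on the other hand it yields only density and not the paper's exact formula or its orthogonal-basis corollary.
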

 \begin{proof}
Consider all admissible routes from $(n,j)$ to $(k,n)$ in our set of  tableaux. We observe that the only route which does not contain the upper hook arrow box (\ref{arrow_boxes3}), corresponding to the factor $S^*C_q$, is of the form
$$
\begin{tikzpicture}[thick,scale=0.5]
\draw[step=1.0,black,thick] (1,1) grid (7,7);
\node at (3.5,0.5) {$j$};
\node at (7.5,4.5) {$k$};
\draw [->] (1/2+3,0.05+1)--(1/2+3,0.95+1);
\draw [->] (1/2+3,0.05+2)--(1/2+3,0.95+2);
\draw [->] (1/2+3,0.05+3)--(1/2+3,0.95+3);
\draw [->] (0.5+3,0.05+4)--(0.5+3,0.5+4)--(0.95+3,0.5+4);
\draw [->] (0.05+4,0.5+4)--(0.95+4,0.5+4);
\draw [->] (0.05+5,0.5+4)--(0.95+5,0.5+4);
\draw [->] (0.05+6,0.5+4)--(0.95+6,0.5+4);
\end{tikzpicture}
$$
The route represents an operator of the form $R_k^j\otimes \underset{\stackrel{\uparrow}{h(k,j)}}{C_qS}\otimes F_k^j$, where $R_k^j$, $F_k^j$ are tensor products of the factors $d(q)$ and $I$.
Furthermore, $S^*C_q$ can occur as a factor in a non-zero term for ${\mathcal T}(z_k^j)$ at the position with  index which is strictly larger than $h(k,j)$.
Recalling now that $S^*C_q$ annihilates $e_0$ we obtain that for $v=e_{m_1}\otimes\ldots \otimes e_{m_{h(k,j)}}\otimes e_0\otimes \ldots\otimes e_0$
$$\mathcal T(z^j_k)v=q^{j-n}(R_k^j\otimes \underset{\stackrel{\uparrow}{h(k,j)}}{C_qS}\otimes F_k^j)v,$$
i.e.  the only summand in $\mathcal T(z^j_k)$ that corresponds to the above route survives after applying $\mathcal T(z^j_k)$ to the vector $v$.
Similarly, for $m\geq 1$,
\begin{eqnarray*}\mathcal T(z^j_k)^mv&=&q^{(j-n)m}((R_k^j)^m\otimes \underset{\stackrel{\uparrow}{h(k,j)}}{(C_qS)^m}\otimes (F_k^j)^m)v\\
&=&\beta_k^j(m)e_{m_1}\otimes\ldots\otimes e_{m_{h(k,j)-1}}\otimes \underset{\stackrel{\uparrow}{h(k,j)}}{e_{m_{h(k,j)}+m}}\otimes e_0\otimes\ldots\otimes e_0
\end{eqnarray*}
for some non-zero constant $\beta_k^j(m)$ (depending on $m_1,\ldots, m_{h(k,j)}$).

By letting $X_{h(k,j)}=\mathcal T(z^j_k)$ we can now easily obtain that
\begin{eqnarray*}
&&X_{n^2}^{m_{n^2}}\ldots X_2^{m_2}X_1^{m_1}v_0=\beta_1(m_1)X_{n^2}^{m_{n^2}}\ldots X_2^{m_2}(e_{m_1}\otimes e_0\otimes e_0\otimes\ldots\otimes e_0)\\
&=&\beta_1(m_1)\beta_2(m_1,m_2)X_{n^2}^{m_{n^2}}\ldots X_3^{m_3}(e_{m_1}\otimes e_{m_2}\otimes e_0\otimes\ldots\otimes e_0)=\ldots \\
&=&\prod_{i=1}^{n^2}\beta_i(m_1,\ldots,m_i)e_{m_1}\otimes e_{m_2}\otimes e_{m_3}\otimes\ldots\otimes e_{m_{n^2}}
\end{eqnarray*}
for non-zero $\beta_i(m_1,\ldots, m_i)$, $i=1,\ldots, n^2$.
The proof is done.
 \end{proof}

The proof above gives more, namely if $X_i$, $i=1,\ldots, n^2$, are as in the proof then we obtain
\begin{cor}
$\{X_{n^2}^{m_{n^2}}\ldots X_2^{m_2}X_1^{m_1}v_0, m_i\in\mathbb Z_+\}$ is an orthogonal basis of $\ell^2(\mathbb Z_+)^{n^2}$.
\end{cor}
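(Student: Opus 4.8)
The plan is to read the statement off directly from the computation already carried out in the proof of Lemma~\ref{density}. There, with the notation $X_{h(k,j)}=\mathcal T(z^j_k)$, it was shown that
$$
X_{n^2}^{m_{n^2}}\cdots X_2^{m_2}X_1^{m_1}v_0=\Big(\prod_{i=1}^{n^2}\beta_i(m_1,\ldots,m_i)\Big)\, e_{m_1}\otimes e_{m_2}\otimes\cdots\otimes e_{m_{n^2}},
$$
where each scalar $\beta_i(m_1,\ldots,m_i)$ is \emph{nonzero}. So the first step is simply to quote this identity, which expresses every vector $X_{n^2}^{m_{n^2}}\cdots X_1^{m_1}v_0$ as a nonzero multiple of a standard basis tensor.

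Second, I would recall that, by definition of the Hilbertian tensor product, the family $\{e_{m_1}\otimes\cdots\otimes e_{m_{n^2}} : m_1,\ldots,m_{n^2}\in\mathbb Z_+\}$ is the standard orthonormal basis of $\ell^2(\mathbb Z_+)^{\otimes n^2}$, and that $(m_1,\ldots,m_{n^2})\mapsto e_{m_1}\otimes\cdots\otimes e_{m_{n^2}}$ is a bijection from $\mathbb Z_+^{n^2}$ onto this basis.

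Third, I would combine the two observations: since each $X_{n^2}^{m_{n^2}}\cdots X_1^{m_1}v_0$ is a nonzero scalar multiple of a distinct element of an orthonormal basis, the indexed family $\{X_{n^2}^{m_{n^2}}\cdots X_1^{m_1}v_0 : m_i\in\mathbb Z_+\}$ is obtained from an orthonormal basis by rescaling each vector by a nonzero constant. Such a rescaling preserves pairwise orthogonality and leaves the closed linear span unchanged; in particular distinct tuples yield distinct (indeed orthogonal) vectors, so the parametrization is faithful and the family is an orthogonal basis of $\ell^2(\mathbb Z_+)^{\otimes n^2}$.

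There is essentially no obstacle: the corollary is a bookkeeping consequence of Lemma~\ref{density}, and the only point deserving a word is that the constants $\beta_i$ do not vanish — which is precisely what was established in that proof — so that no vector of the family degenerates to $0$ and both orthogonality and completeness transfer from the standard basis.
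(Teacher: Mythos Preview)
Your proposal is correct and matches the paper's own treatment: the corollary is stated immediately after Lemma~\ref{density} with the remark that the computation there already yields it, which is precisely the argument you spell out. There is nothing to add beyond what you have written.
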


 We can now complete the proof of the theorem.

 \medskip

 {\bf Proof of Theorem \ref{Fock_via_suq2n}}. As the Fock representation is the only $*$-representation (up to unitary equivalence) that has a cyclic vacuum vector (see \cite{ssv}), the statement follows from Lemma \ref{vaccum_vector} and Lemma \ref{density}.\qed

 \medskip

 We proceed with deriving a number of useful corollaries from our construction of the Fock representation.

Let $C^*(S)$ be the $C^*$-algebra generated by the isometry $S$.
It is easy to see that the operators $C_q$, $S$, $d(q)\in B(\ell^2(\mathbb Z_+))$ satisfy the following relations
\begin{eqnarray}
C_q&=&(1-q^2)^{1/2}(\sum_{n=0}^\infty q^{2n}S^{n+1}(S^*)^{n+1})^{1/2},\label{cdq1}\\
d(q)&=&\sum_{n=0}^\infty q^n(S^n(S^*)^n-S^{n+1}(S^*)^{n+1}),\quad S^0:=1,\label{cdq2}
\end{eqnarray}
where the series  converge in the operator norm topology.
Consequently,  $C_q,d(q)\in C^*(S)$.

 Identifying $\pi_{F,n}$ with the representation $\mathcal T$ of Theorem \ref{Fock_via_suq2n}, we have
 \begin{cor}\label{C*S}
 $C_F(\overline{\mathbb D}_n)_q\subset C^*(S)^{\otimes{n^2}}.$
 \end{cor}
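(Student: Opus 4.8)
The plan is essentially to unwind the explicit formula for $\mathcal T$ and observe that every operator occurring in it is already built from the single isometry $S$. First I would invoke Theorem~\ref{Fock_via_suq2n} to identify $\pi_{F,n}$ with $\mathcal T=\pi_u\circ\iota$ acting on $\ell^2(\mathbb Z_+)^{\otimes n^2}$; under this identification $C_F(\overline{\mathbb D}_n)_q$ is the $C^*$-subalgebra of $B(\ell^2(\mathbb Z_+)^{\otimes n^2})$ generated by the operators $\mathcal T(z_j^i)$, $1\le i,j\le n$. Since the $C^*$-algebra generated by a subset of a $C^*$-algebra is the smallest $C^*$-subalgebra containing it, it suffices to show that each generator $\mathcal T(z_j^i)$ lies in $C^*(S)^{\otimes n^2}$, regarded concretely as a $C^*$-subalgebra of $B(\ell^2(\mathbb Z_+)^{\otimes n^2})$.

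For this I would read off formula \eqref{rep_T}, which expresses $\mathcal T(z_k^j)$ as a finite linear combination of elementary tensors of the form $\pi_{\sigma_1}(t_{n+j,k_1})\otimes\cdots\otimes\pi_{\sigma_{n^2}}(t_{k_{n^2-1},n+k})$; by the list of non-zero factors recalled immediately after \eqref{rep_T}, each tensor leg equals one of $qd(q)$, $-d(q)$, $S^*C_q$, $C_qS$ or $I$. So the whole statement reduces to the assertion that $C_q$, $d(q)$ and $I$ all belong to $C^*(S)$. The identity is $I=S^*S\in C^*(S)$; the containments $C_q,d(q)\in C^*(S)$ are precisely \eqref{cdq1}--\eqref{cdq2}, which I would verify by noting that $S^m(S^*)^m$ is the orthogonal projection onto $\overline{\mathrm{span}}\{e_\ell:\ell\ge m\}$, so that the partial sums of $\sum_m q^{2m}S^{m+1}(S^*)^{m+1}$ and of $\sum_m q^m\bigl(S^m(S^*)^m-S^{m+1}(S^*)^{m+1}\bigr)$ are diagonal operators whose diagonals converge uniformly; the sums are then $(1-q^2)^{-1}C_q^2$ and $d(q)$, respectively, whence $C_q=(1-q^2)^{1/2}\bigl(\sum_m q^{2m}S^{m+1}(S^*)^{m+1}\bigr)^{1/2}\in C^*(S)$ and $d(q)\in C^*(S)$. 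Consequently $S^*C_q$ and $C_qS$ lie in $C^*(S)$ as well, every elementary tensor in \eqref{rep_T} lies in the algebraic tensor product of $n^2$ copies of $C^*(S)$, and therefore $\mathcal T(z_k^j)\in C^*(S)^{\otimes n^2}$ for all $j,k$.

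Putting this together, $C^*(S)^{\otimes n^2}$ is a $C^*$-subalgebra of $B(\ell^2(\mathbb Z_+)^{\otimes n^2})$ containing all the generators $\mathcal T(z_j^i)$, hence it contains the $C^*$-algebra they generate, which is $C_F(\overline{\mathbb D}_n)_q$ under the identification of Theorem~\ref{Fock_via_suq2n}. I do not expect any genuine obstacle here: once \eqref{rep_T} and \eqref{cdq1}--\eqref{cdq2} are available the argument is pure bookkeeping, and the only mildly technical point — the operator-norm convergence of the two series defining $C_q$ and $d(q)$ in terms of $S$ — follows from the uniform convergence of their diagonal partial sums.
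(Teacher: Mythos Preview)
Your proposal is correct and follows exactly the paper's argument: identify $\pi_{F,n}$ with $\mathcal T$, note from \eqref{rep_T} that each $\mathcal T(z_k^j)$ is a finite sum of elementary tensors whose legs are among $d(q)$, $C_qS$, $S^*C_q$, $I$, and invoke \eqref{cdq1}--\eqref{cdq2} to conclude these lie in $C^*(S)$. The only difference is that you spell out the verification of \eqref{cdq1}--\eqref{cdq2}, which the paper records just before the corollary as an observation.
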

 \begin{proof}
 Each $\mathcal T(z_k^j)$ is  a linear combination of elementary tensors  in $B(\ell^2(\mathbb Z_+))^{\otimes n^2}$ each factor of which is either $d(q)$, $C_qS$, $S^*C_q$ or $I$, the latter are elements of $C^*(S)$.
 \end{proof}

Let $\mathcal{K}$ be the space of compact operators on $\ell^2(\mathbb Z_+)$ and $C(\mathbb T)$ be the $C^*$-algebra of continuous functions on the torus $\mathbb T=\{z\in\mathbb C:|z|=1\}$.  It is known that
\begin{equation}\label{def_phi}
\phi: C^*(S)\to C(\mathbb T), S\mapsto z,
\end{equation}
 is a surjective $*$-homomorphism with kernel $\ker\phi=\mathcal K$ (see e.g. \cite{davidson}). In particular, by (\ref{cdq1})-(\ref{cdq2})
\begin{equation}\label{ck}
\phi(d(q))=0\text{ and } \phi(C_q)=1.
\end{equation}

 This is essential for the proof of the following lemma.

\begin{lemma}\label{character}
Let ${\varphi}=(\varphi_1,\ldots,\varphi_n)\in[0,2\pi)^n$ and $\chi_{\bf\varphi}:z_k^l\to e^{i\varphi_k}q^{l-n}\delta_{kl}$, $k,l=1\ldots,n$. Then $\chi_{\bf \varphi}$ extends uniquely to a $*$-representation of $\mathrm{Pol}(\mathrm{Mat}_n)_q$. Moreover,
$|\chi_{\bf \varphi}(a)|\leq\|\pi_{F,n}(a)\|$ for any $a\in \mathrm{Pol}(\mathrm{Mat}_n)_q$, i.e. the map $\pi_{F,n}(a)\mapsto\chi_{\bf \varphi}(a)$, $a\in\mathrm{Pol}(\mathrm{Mat}_n)_q$, extends to a $*$-representation of $C_{F}(\overline{\mathbb D}_n)_q$.
\end{lemma}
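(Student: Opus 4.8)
The statement has two parts: first that $\chi_{\boldsymbol\varphi}$ extends to a $*$-representation of $\mathrm{Pol}(\mathrm{Mat}_n)_q$, and second that it is dominated in norm by the Fock representation. For the first part, the plan is to produce $\chi_{\boldsymbol\varphi}$ as a composition of maps already at hand. Using the $*$-homomorphism $\Pi_\varphi:\mathrm{Pol}(\mathrm{Mat}_n)_q\to\mathrm{Pol}(\mathrm{Mat}_{n-1})_q$ of Lemma \ref{pivarphi}, iterate it $n$ times with the angles $\varphi_n,\varphi_{n-1},\dots$ peeled off one at a time; each application kills the last row and column and replaces the corner generator by a scalar $e^{i\varphi}$, while rescaling the surviving generators by $q^{-1}$. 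Composing $\Pi_{\varphi_n}\circ\Pi_{\varphi_{n-1}}\circ\cdots$ down to $\mathrm{Pol}(\mathrm{Mat}_0)_q=\mathbb C$ sends $z^l_k$ to $0$ unless $k=l$, and sends the diagonal $z^l_l$ to a scalar; tracking the accumulated powers of $q^{-1}$ (the generator $z^l_l$ sits in row/column $l$, so it survives $n-l$ applications each contributing a factor $q^{-1}$, and is finally set to $e^{i\varphi_l}$) gives exactly $z^l_k\mapsto e^{i\varphi_k}q^{l-n}\delta_{kl}$. Since a composition of $*$-homomorphisms into $\mathbb C$ is a $*$-representation, this settles existence and uniqueness (uniqueness being clear since the $z^l_k$ generate).

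\textbf{The norm bound.} For the inequality $|\chi_{\boldsymbol\varphi}(a)|\le\|\pi_{F,n}(a)\|$, the plan is to realize $\chi_{\boldsymbol\varphi}$ as a ``symbol map'' applied to $\pi_{F,n}$, exploiting the identification $\pi_{F,n}\cong\mathcal T$ with $\mathcal T$ acting on $\ell^2(\mathbb Z_+)^{\otimes n^2}$ inside $C^*(S)^{\otimes n^2}$ (Corollary \ref{C*S}). The key tool is the $*$-homomorphism $\phi:C^*(S)\to C(\mathbb T)$, $S\mapsto z$, with $\phi(d(q))=0$ and $\phi(C_q)=1$ from \eqref{ck}. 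Forming $\phi^{\otimes n^2}:C^*(S)^{\otimes n^2}\to C(\mathbb T)^{\otimes n^2}=C(\mathbb T^{n^2})$, and evaluating at a suitable point $(\zeta_1,\dots,\zeta_{n^2})\in\mathbb T^{n^2}$, one obtains a character of $C_F(\overline{\mathbb D}_n)_q$, hence a contraction. The content is then to check that for a correct choice of the point, this character agrees with $\chi_{\boldsymbol\varphi}$ on the generators $\mathcal T(z^j_k)$, and that it annihilates $\mathcal T(z^j_k)$ for $k\ne j$. Using the diagram description: since $\phi(d(q))=0$, every admissible route containing a $d(q)$-box (i.e.\ a horizontal arrow \begin{tikzpicture}[line width=0.21mm,scale=0.3]\draw (1,0) -- (0,0) -- (0,1)--(1,1)--(1,0)--(0,0);\draw [->] (0.1,0.5)--(0.9,0.5);\end{tikzpicture} or a vertical arrow \begin{tikzpicture}[line width=0.21mm,scale=0.3]\draw (1,0) -- (0,0) -- (0,1)--(1,1)--(1,0)--(0,0);\draw [->] (1/2,0.1)--(1/2,0.9);\end{tikzpicture}, which represent $\pi_{s_i}(t_{i,i+1})$ and $\pi_{s_i}(t_{i+1,i})$) dies under $\phi^{\otimes n^2}$; what survives are routes built entirely from the two hook boxes \eqref{arrow_boxes3} and \eqref{arrow_boxes4} (representing $S^*C_q$ and $C_qS$, both mapped to $z$, respectively $\bar z$, up to scalars) and empty boxes. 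One checks that for $k\ne j$ no such route from $(n,j)$ to $(k,n)$ exists, so $\phi^{\otimes n^2}(\mathcal T(z^j_k))=0$; and for $k=j$ exactly one term survives (the ``staircase'' route), producing a single unimodular scalar times $q^{j-n}$ on the appropriate tensor leg, which after evaluating at a point $\zeta$ with the right coordinate equal to $e^{i\varphi_j}$ gives $e^{i\varphi_j}q^{j-n}$, matching $\chi_{\boldsymbol\varphi}(z^j_j)$. Since two $*$-representations agreeing on generators agree everywhere, the composed character equals $\chi_{\boldsymbol\varphi}$, and being a character of a $C^*$-algebra it has norm $\le 1$, giving $|\chi_{\boldsymbol\varphi}(a)|=|(\mathrm{ev}_\zeta\circ\phi^{\otimes n^2})(\pi_{F,n}(a))|\le\|\pi_{F,n}(a)\|$. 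The final sentence of the statement is then automatic: a contractive map that factors through $\pi_{F,n}(\mathrm{Pol}(\mathrm{Mat}_n)_q)$ extends to the $C^*$-completion $C_F(\overline{\mathbb D}_n)_q$.

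\textbf{Main obstacle.} The routine algebra (iterating $\Pi_\varphi$, counting powers of $q$) is harmless; the delicate point is the combinatorial claim that after killing all $d(q)$-boxes, the only surviving admissible route is the staircase route when $k=j$ and there is none when $k\ne j$. This requires a careful reading of the admissibility constraints on the box-tableaux — in particular that an admissible path is \emph{connected} and enters at $(n,j)$ and exits at $(k,n)$, and that the only boxes compatible with $\phi$ are the two hooks, whose arrow geometry forces the path to go straight up the $j$-th column and then straight across the top row. I would isolate this as a short lemma about the diagrams (essentially a refinement of the analysis already sketched before Lemma \ref{vaccum_vector} and in the proof of Lemma \ref{density}), and the rest of the argument is then bookkeeping with $\phi$ and evaluation characters.
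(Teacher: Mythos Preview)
Your approach for the norm bound is exactly the paper's: compose the symbol map $\phi^{\otimes n^2}:C^*(S)^{\otimes n^2}\to C(\mathbb T^{n^2})$ with evaluation at a point, apply it to the concrete Fock model $\mathcal T$, and use that $\phi(d(q))=0$ kills every admissible route containing a straight arrow box, leaving only the diagonal ``staircase'' of alternating hooks when $k=l$ and nothing otherwise. The paper then gets existence of $\chi_{\boldsymbol\varphi}$ for free from this same construction, whereas you supply a separate existence argument by iterating $\Pi_{\varphi_n},\Pi_{\varphi_{n-1}},\dots,\Pi_{\varphi_1}$ down to $\mathbb C$; that alternative is correct and pleasantly elementary, though strictly speaking redundant once the character is realized as $\Delta_\tau\circ\mathcal T$.

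One genuine slip to fix: in your ``Main obstacle'' paragraph you say the two hook boxes force the path ``to go straight up the $j$-th column and then straight across the top row.'' That is precisely the \emph{wrong} path --- it is the route from Lemma~\ref{density}, built out of the vertical and horizontal arrow boxes \eqref{arrow_boxes1}--\eqref{arrow_boxes2}, which represent $qd(q)$ and $-d(q)$ and therefore die under $\phi$. The surviving route (as you correctly call it earlier, the ``staircase'') is the \emph{diagonal} one alternating the right hook $C_qS$ and the upper hook $S^*C_q$, exactly as in the paper's displayed $n=3$ examples. The combinatorial lemma you need is: a connected admissible path from $(n,l)$ to $(k,n)$ using only the two hook boxes must alternate them and hence moves one step right and one step up per pair, forcing $l=k$ and pinning the route on the anti-diagonal squares $(n,k),(n,k{+}1),(n{-}1,k{+}1),\dots,(k,n)$. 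With that correction your bookkeeping goes through: each $C_qS\mapsto z$, each $S^*C_q\mapsto\bar z$, the resulting phase is a product of coordinates of $\zeta\in\mathbb T^{n^2}$ (not a single coordinate), and one chooses $\zeta$ so that these products equal $e^{i\varphi_k}$ for each $k$, just as the paper does.
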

\begin{proof}
If we compose the $*$-homomorphism $\phi$ given by \eqref{def_phi} with the evaluation map at $e^{i\alpha}$, $\alpha\in [0,2\pi),$ we get a $*$-homomorphism $C^{*}(S)\rightarrow \mathbb{C}$ determined by $S\mapsto e^{i\alpha}.$ For $\tau=(\tau_1,\ldots,\tau_{n^2})$ let  $\Delta_{\tau}:C^{*}(S)^{\otimes n^{2}}\rightarrow \mathbb{C}$ be a $*$-homomorphism defined by $$I\otimes ...\otimes I\otimes \underset{\stackrel{\uparrow}{\text{i'th place}}}S\otimes I\otimes ...\otimes I\mapsto e^{i\tau_{i}},$$
 $i\in\{1,2,\dots,n^{2}\}.$   As $\phi(d(q))=0$ we obtain that $\Delta_{\tau}$ annihilates any elementary tensor containing $d(q)$ as a factor.
 Next we observe  that the only admissible paths from $(n,l)$ to $(k,n)$ that do not contain the arrow boxes $\begin{tikzpicture}[line width=0.21mm,scale=0.3]
\draw (1,0) -- (0,0) -- (0,1)--(1,1)--(1,0)--(0,0);
\draw [->] (0.1,0.5)--(0.9,0.5);
\end{tikzpicture}$ and $\begin{tikzpicture}[line width=0.21mm,scale=0.3]
\draw (1,0) -- (0,0) -- (0,1)--(1,1)--(1,0)--(0,0);
\draw [->] (1/2,0.1)--(1/2,0.9);
\end{tikzpicture}$ are the paths from $(n,k)$ to $(k,n)$ consisting of  $\begin{tikzpicture}[line width=0.21mm,scale=0.3]
\draw (1,0) -- (0,0) -- (0,1)--(1,1)--(1,0)--(0,0);
\draw [->] (0.5,0.1)--(0.5,0.5)--(0.9,0.5);
\end{tikzpicture}$ and $\begin{tikzpicture}[line width=0.21mm,scale=0.3]
\draw (1,0) -- (0,0) -- (0,1)--(1,1)--(1,0)--(0,0);
\draw [->] (0.1,0.5)--(0.5,0.5)--(0.5,0.9);
\end{tikzpicture}$ following one after another; for example if $n=3$, $k=1$ we have
$$\begin{tikzpicture}[thick,scale=0.5]
\draw (1,0) -- (0,0) -- (0,1)--(1,1)--(1,0)--(0,0);
\draw (1,1) -- (0,1) -- (0,2)--(1,2)--(1,1)--(0,1);
\draw (1,2) -- (0,2) -- (0,3)--(1,3)--(1,2)--(0,2);
\draw (2,0) -- (1,0) -- (1,1)--(2,1)--(2,0)--(1,0);
\draw (2,1) -- (1,1) -- (1,2)--(2,2)--(2,1)--(1,1);
\draw (2,2) -- (1,2) -- (1,3)--(2,3)--(2,2)--(1,2);
\draw (3,0) -- (2,0) -- (2,1)--(3,1)--(3,0)--(2,0);
\draw (3,1) -- (2,1) -- (2,2)--(3,2)--(3,1)--(2,1);
\draw (3,2) -- (2,2) -- (2,3)--(3,3)--(3,2)--(2,2);
\draw [->] (0.5,0.05)--(0.5,0.5)--(0.95,0.5);\draw [->] (0.05+1,0.5)--(0.5+1,0.5)--(0.5+1,0.95);\draw [->] (0.5+1,0.05+1)--(0.5+1,0.5+1)--(0.95+1,0.5+1);\draw [->] (0.05+2,0.5+1)--(0.5+2,0.5+1)--(0.5+2,0.95+1);\draw [->] (0.5+2,0.05+2)--(0.5+2,0.5+2)--(0.95+2,0.5+2);
\end{tikzpicture}$$
and if $n=3$, $k=2$ we have
$$\begin{tikzpicture}[thick,scale=0.5]
\draw (1,0) -- (0,0) -- (0,1)--(1,1)--(1,0)--(0,0);
\draw (1,1) -- (0,1) -- (0,2)--(1,2)--(1,1)--(0,1);
\draw (1,2) -- (0,2) -- (0,3)--(1,3)--(1,2)--(0,2);
\draw (2,0) -- (1,0) -- (1,1)--(2,1)--(2,0)--(1,0);
\draw (2,1) -- (1,1) -- (1,2)--(2,2)--(2,1)--(1,1);
\draw (2,2) -- (1,2) -- (1,3)--(2,3)--(2,2)--(1,2);
\draw (3,0) -- (2,0) -- (2,1)--(3,1)--(3,0)--(2,0);
\draw (3,1) -- (2,1) -- (2,2)--(3,2)--(3,1)--(2,1);
\draw (3,2) -- (2,2) -- (2,3)--(3,3)--(3,2)--(2,2);
\draw [->] (1.5,0.05)--(1.5,0.5)--(1.95,0.5);\draw [->] (1.05+1,0.5)--(1.5+1,0.5)--(1.5+1,0.95);\draw [->] (1.5+1,0.05+1)--(1.5+1,0.5+1)--(1.95+1,0.5+1);
\end{tikzpicture}$$
Therefore,
$\Delta_{\tau}\circ{\mathcal T}(z_k^l)= e^{ i c_k}q^{l-n}\delta_{kl}$ for some constants $c_{k}\in [0,2\pi)$. Moreover, it is easy to see that we can choose the set $\{\tau_{1},\dots,\tau_{n^{2}}\}$ such that $c_{k}=\varphi_{k}$ and hence $\Delta_{\tau}\circ{\mathcal T}(z_k^l)=\chi_\varphi(z_k^l)$. As $\Delta_{\tau}$ is a $*$-homomorphism and hence contractive, we obtain  that $\chi_\varphi$ extends to a $*$-representation of $\mathrm{Pol}(\mathrm{Mat}_n)_q$ and
$$|\chi_\varphi(a)|=\|\Delta_{\tau}\circ{\mathcal T}(a)\|\leq \|{\mathcal T}(a)\|$$
for any $a\in  \mathrm{Pol}(\mathrm{Mat}_n)_q$.
\end{proof}

In what follows we shall also need the so-called  coherent representations which are defined in a way similar to the Fock representation: consider a $\mathrm{Pol}(\mathrm{Mat}_n)_q$-module ${\mathcal H}_\Omega$ determined by
a cyclic vector $\Omega$ such that
$$(z_j^i)^*\Omega=0, \text{ if }(i,j)\ne (1,1)\text{ and }(z_1^1)^*\Omega=e^{-i\varphi}\Omega,$$
for $\varphi\in[0,2\pi)$.
Next we shall prove that the module action gives rise to a bounded $*$-representation  of $\mathrm{Pol}(\mathrm{Mat}_n)_q$ on a Hilbert space $H_\Omega$ which is a completion of ${\mathcal H}_\Omega$ with respect to some inner product.
For $\psi\in[0,2\pi)$ let $\omega_\psi: C(\mathbb T)\to\mathbb C$ be the evaluation map $f\mapsto f(e^{i\psi})$.

\begin{proposition}\label{coherent_more}
Let  $\mathcal T$ be  the Fock representation given in Theorem \ref{Fock_via_suq2n}. Then $$\mathcal T_\psi:=({\rm id}\otimes\ldots\otimes \underset{\stackrel{\uparrow}{n}}{(\omega_\psi\circ \phi)}\otimes{\rm id}\otimes\ldots\otimes{\rm id})\circ \mathcal T:\mathrm{Pol}(\mathrm{Mat}_n)_q\to C^*(S)^{\otimes(n^2-1)}$$
is unitary equivalent to a coherent representation of $\mathrm{Pol}(\mathrm{Mat}_n)_q$.
\end{proposition}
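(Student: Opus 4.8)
The plan is to show that the operator $\mathcal T_\psi$ has a cyclic vector $\Omega$ behaving exactly like the defining vector of a coherent representation, and then invoke a uniqueness statement for such cyclic modules analogous to the one used for the Fock representation. First I would take $\Omega = e_0 \otimes \dots \otimes e_0 \in \ell^2(\mathbb Z_+)^{\otimes(n^2-1)}$, where the missing tensor leg is the $h(1,1)=n$-th one that has been collapsed by $\omega_\psi\circ\phi$. Recall from Lemma \ref{density} and its proof that among the admissible routes from $(n,1)$ to $(1,n)$ contributing to $\mathcal T(z_1^1)$ there is exactly one route avoiding the upper hook box \eqref{arrow_boxes3} (the factor $S^*C_q$), and it produces an operator of the form $R_1^1 \otimes \underset{\uparrow h(1,1)}{C_qS} \otimes F_1^1$ with $R_1^1, F_1^1$ tensor products of $d(q)$'s and $I$'s. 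Applying $\mathrm{id}\otimes\dots\otimes(\omega_\psi\circ\phi)\otimes\dots\otimes\mathrm{id}$ and using \eqref{ck}, i.e. $\phi(d(q))=0$, $\phi(C_q)=1$, every route containing a $d(q)$ in the $n$-th leg dies, and the surviving contribution of this distinguished route to $\mathcal T_\psi(z_1^1)^*$ applied to $\Omega$ is, up to the scalar $e^{i\psi}$ coming from $S\mapsto e^{i\psi}$ and the prefactor $q^{j-n}=q^{1-n}$, a multiple of $\Omega$ in the collapsed leg. The other routes contributing to $\mathcal T(z_1^1)$ either carry $S^*C_q$ (hence annihilate $e_0$ in some uncollapsed leg, just as in Lemma \ref{vaccum_vector}) or carry $d(q)$ in the $n$-th leg (hence die under $\phi$). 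This should yield $\mathcal T_\psi(z_1^1)^*\Omega = \overline{c}\,\Omega$ for an explicit unimodular-times-$q^{\text{power}}$ constant $c$.

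Next I would check that $\mathcal T_\psi(z_j^i)^*\Omega = 0$ for $(i,j)\neq(1,1)$. For these generators, the admissible routes run from $(n,j)$ to $(i,n)$, and I claim every nonzero summand still contains a factor $C_qS$ (box \eqref{arrow_boxes4}) sitting in some uncollapsed leg — more precisely, in a leg with index strictly less than $h(1,1)=n$, or strictly greater, but not equal. One has to argue that for $(i,j)\neq(1,1)$ no admissible route can route its unique $C_qS$ box through the position with label $n=h(1,1)$; this is a combinatorial statement about the box tableaux, visible from the explicit start/end positions analysis carried out before Lemma \ref{vaccum_vector}. Granting this, $(C_qS)^*e_0=0$ forces $\mathcal T_\psi(z_j^i)^*\Omega=0$. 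A minor rescaling of $\Omega$ (or composing with a diagonal automorphism of the type $\Pi_\varphi$ / the character twist from Lemma \ref{character}) normalizes $c$ to the exact form $(z_1^1)^*\Omega = e^{-i\varphi}\Omega$ demanded in the definition of a coherent representation, with $\varphi$ a function of $\psi$.

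Then I would establish cyclicity of $\Omega$ for $\mathcal T_\psi$: apply $\mathrm{id}\otimes\dots\otimes(\omega_\psi\circ\phi)\otimes\dots\otimes\mathrm{id}$ to the computation at the end of the proof of Lemma \ref{density}. Since $\phi(C_q)=1$ and $\phi(S)=e^{i\psi}$, the operator $X_{h(1,1)}=\mathcal T_\psi(z_1^1)$ acts on the collapsed leg as a scalar, while all the other $X_i$ ($i\neq h(1,1)$) retain their original form with $(C_qS)^{m_i}e_0$ producing $e_{m_i}$ up to a nonzero constant (the $d(q)$-factors in other legs evaluate to nonzero scalars $q^{\text{power}}$ on the relevant basis vectors, not to $0$, since those legs are not hit by $\phi$). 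Hence $\{X_{n^2}^{m_{n^2}}\cdots X_1^{m_1}\Omega\}$, with the $m_{h(1,1)}$-index dropped, spans a dense subspace and is in fact an orthogonal basis of $\ell^2(\mathbb Z_+)^{\otimes(n^2-1)}$, so $\Omega$ is cyclic.

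Finally, I would conclude exactly as in the proof of Theorem \ref{Fock_via_suq2n}: the coherent representation is, by its definition via the cyclic module ${\mathcal H}_\Omega$, the unique (up to unitary equivalence) $*$-representation with a cyclic vector satisfying the relations $(z_j^i)^*\Omega=0$ for $(i,j)\neq(1,1)$ and $(z_1^1)^*\Omega=e^{-i\varphi}\Omega$ — this uniqueness is the coherent-representation analogue of the Fock uniqueness of \cite{ssv}, and one should note that positivity/boundedness of the resulting inner product is automatic here because $\mathcal T_\psi$ is manifestly a bounded $*$-representation on a genuine Hilbert space. Matching the two modules gives the unitary equivalence. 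The main obstacle is the purely combinatorial claim in the second paragraph: verifying that for every $(i,j)\neq(1,1)$ and every admissible route from $(n,j)$ to $(i,n)$, the (unique) $C_qS$-box never lands at the tableau position labelled $h(1,1)=n$, so that the collapse at leg $n$ never removes the vacuum-annihilating factor. This requires a careful but elementary case analysis of the possible routes near the first column and bottom row of the tableau, of the same flavour as the start/end-arrow discussion preceding Lemma \ref{vaccum_vector}.
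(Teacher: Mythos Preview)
Your approach is the same as the paper's: exhibit $\Omega=e_0^{\otimes(n^2-1)}$ as a cyclic vector satisfying the coherent relations, then conclude. The paper verifies exactly these three facts --- the eigenvector property at $z_1^1$, annihilation by the remaining $(z_k^j)^*$, and cyclicity via the argument of Lemma~\ref{density} --- and its combinatorial route analysis is essentially what you describe.

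There is, however, one slip. In your first paragraph you compute $\mathcal T_\psi(z_1^1)^*\Omega$ and claim the non-distinguished routes vanish because they carry $S^*C_q$. But you are applying the \emph{adjoint}: a factor $S^*C_q$ in $\mathcal T(z_1^1)$ becomes $C_qS$ in $\mathcal T(z_1^1)^*$, and $C_qS\,e_0=\sqrt{1-q^2}\,e_1\neq 0$. The correct reason is the one you yourself invoke in the second paragraph and which underlies Lemma~\ref{vaccum_vector}: every admissible route carries a right-hook factor $C_qS$, and for the non-distinguished routes of $z_1^1$ this $C_qS$ sits in a leg $\neq n$, so its adjoint $S^*C_q$ kills $e_0$. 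What you need is that the \emph{only} route from $(n,1)$ to $(1,n)$ passing through box $(1,1)$ is the distinguished one (at $(1,1)$ the path must enter from below and exit to the right, forcing it to climb all of column $1$ and then cross all of row $1$); every other route therefore has $I$ at position $n$ and its right hook elsewhere. Your ``$d(q)$ in the $n$-th leg'' alternative is in fact empty: the top-left box can only host a right hook or be blank. The same observation dispatches the combinatorial claim of your second paragraph with no further case analysis: for $(j,k)\neq(1,1)$ no route from $(n,j)$ to $(k,n)$ can reach box $(1,1)$ at all (if $j\geq 2$ it never enters column $1$; if $j=1,\ k\geq 2$ it cannot reach row $1$ and still end in row $k$).

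A smaller point: the eigenvalue is exactly $(-1)^{n-1}e^{\pm i\psi}$, hence unimodular --- the prefactor $q^{1-n}$ cancels against the $n-1$ factors of $q$ from the right-arrow boxes $qd(q)$ along row $1$. No normalization is needed, and rescaling $\Omega$ would not change its eigenvalue in any case.
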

 \begin{proof}
Any admissible route from the position $(n,1)$ to $(1,n)$ is either

$$
\begin{tikzpicture}[thick,scale=0.5]
\draw[step=1.0,black,thick] (1,1) grid (7,7);
\node at (1.5,0.5) {$1$};
\node at (7.5,6.5) {$1$};
\draw [->] (1/2+1,0.05+1)--(1/2+1,0.95+1);
\draw [->] (1/2+1,0.05+2)--(1/2+1,0.95+2);
\draw [->] (1/2+1,0.05+3)--(1/2+1,0.95+3);
\draw [->] (1/2+1,0.05+4)--(1/2+1,0.95+4);
\draw [->] (1/2+1,0.05+5)--(1/2+1,0.95+5);
\draw [->] (0.5+1,0.05+6)--(0.5+1,0.5+6)--(0.95+1,0.5+6);
\draw [->] (0.05+2,0.5+6)--(0.95+2,0.5+6);
\draw [->] (0.05+3,0.5+6)--(0.95+3,0.5+6);
\draw [->] (0.05+4,0.5+6)--(0.95+4,0.5+6);
\draw [->] (0.05+5,0.5+6)--(0.95+5,0.5+6);
\draw [->] (0.05+6,0.5+6)--(0.95+6,0.5+6);
\end{tikzpicture}
$$
or contains both the right hook and the upper hook arrow boxes in  positions different from $(1,1)$. Hence the corresponding summand in the expression for $\mathcal T(z_1^1)$ is either
$$(-1)^{n-1} d(q)\otimes\ldots\otimes d(q)\otimes\underset{\stackrel{\uparrow}{n}}{C_qS}\otimes I\otimes\ldots\otimes I\otimes \underset{\stackrel{\uparrow}{2n}}{d(q)}\ldots\otimes I\otimes\ldots\otimes I\otimes\underset{\stackrel{\uparrow}{n^2}}{d(q)}$$
or an elementary tensor that contains both the factors $C_qS$ and $S^*C_q$  in positions different from $n=h(1,1)$ and has the identity operator as the $n$-th factor. Hence applying   $({\rm id}\otimes\ldots\otimes \underset{\stackrel{\uparrow}{n}}{(\omega_\psi\circ \phi)}\otimes{\rm id}\otimes\ldots\otimes{\rm id})$ we obtain
$$\mathcal T_\psi(z_1^1)\Omega=(-1)^{n-1}e^{i\psi}\Omega$$
for $\Omega:=e_0\otimes\ldots\otimes e_0\in\ell^2(\mathbb Z_+)^{\otimes (n^2-1)}.$
That $\mathcal T_\psi(z^j_k)^*\Omega=0$ for $(j,k)\ne (1,1)$ follows from the fact that any admissible path from $(n,j)$ to $(k,n)$ contains the right hook arrow box in a position different from $(1,1)$. That $\Omega$ is a cyclic vector can be seen using arguments similar to one in Lemma \ref{density}.
 \end{proof}

 It follows from \cite[Proposition 1.3.3]{jsw} that a coherent representation corresponding to $\varphi\in[0,2\pi)$ is unique, up to unitary equivalence. We shall denote it by $\rho_\varphi^n$.

 The following technical result will be needed in the next section and can be easily derived from our box diagrams.

 \begin{lemma} \label{Fock_coherent} Let $\varphi\in[0,2\pi)$.
 There exist operators $A_{jk}, B\in B(\ell^2(\mathbb Z_+^{(n^2-1)}))$ such that $\pi_{F,n}$ and $\rho_\varphi^n$ are unitary equivalent to the following representations on $\ell^2(\mathbb Z_+^{(n^2-1)})\otimes\ell^2(\mathbb Z_+)$ and
$\ell^2(\mathbb Z_+^{(n^2-1)})$, respectively:
\begin{eqnarray*}
&&\tilde\pi_{F,n}:z_1^1\mapsto B\otimes C_qS+A_{11}\otimes I, \quad z_k^j\mapsto A_{jk}\otimes I,\ (j,k)\ne (1,1)\\
&&\tilde\rho_\varphi^n:z_1^1\mapsto e^{i\varphi}B+A_{11}, \quad z_k^j\mapsto A_{jk}, \ (j,k)\ne (1,1)
\end{eqnarray*}
 \end{lemma}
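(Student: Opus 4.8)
\emph{Proof plan.} First I would replace $\pi_{F,n}$ by the model $\mathcal T=\pi_u\circ\iota$ on $\ell^2(\mathbb Z_+)^{\otimes n^2}$ furnished by Theorem \ref{Fock_via_suq2n}, and use the box-diagram bookkeeping developed above: under the enumeration $h$ the $n$-th tensor leg is the top-left box $(1,1)$ of the $n\times n$ tableau, on which only the factors $\pi_{s_1}(t_{a,b})$ can act, i.e. one of $qd(q)$, $-d(q)$, $S^*C_q$, $C_qS$, $I$. The heart of the argument is the claim that this leg decouples: in every non-zero summand of $\mathcal T(z^j_k)$ the $(1,1)$-factor equals $I$, with a single exception, namely the one summand of $\mathcal T(z^1_1)$ in which it equals $C_qS$.

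To prove the claim I would argue combinatorially, exactly as in the proofs of Lemmas \ref{vaccum_vector}, \ref{density}, \ref{character} and Proposition \ref{coherent_more}. Inspection of the five arrow boxes \eqref{arrow_boxes1}--\eqref{arrow_boxes5} shows that each box is entered only from the left or from below and left only to the right or from the top, so an admissible route never moves leftwards or downwards. Since a summand of $\mathcal T(z^j_k)$ corresponds to an admissible route from $(n,j)$ to $(k,n)$, such a route can reach the corner box $(1,1)$ only by running straight up the first column, and is then forced to leave it to the right along the first row and proceed to $(1,n)$; hence a route meets $(1,1)$ only when $j=k=1$, there is exactly one such route, and --- entering $(1,1)$ from below and exiting to the right --- it must carry the right-hook box $C_qS$ there, while all remaining routes leave the box empty. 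Reading off the scalar and the remaining factors of the exceptional summand as in the proof of Lemma \ref{density}, and cancelling $q^{1-n}$ against the $n-1$ factors $qd(q)$ coming from the first row, it equals $(-1)^{n-1}\,d(q)^{\otimes(n-1)}\otimes C_qS\otimes\bigl(I^{\otimes(n-1)}\otimes d(q)\bigr)^{\otimes(n-1)}$. This combinatorial verification is the step I expect to demand the most care.

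Given the claim I would conjugate $\mathcal T$ by the permutation unitary moving the $n$-th tensor leg into last position, obtaining a unitarily equivalent representation on $\ell^2(\mathbb Z_+^{(n^2-1)})\otimes\ell^2(\mathbb Z_+)$. Setting $B\in B(\ell^2(\mathbb Z_+^{(n^2-1)}))$ equal to the coefficient $(-1)^{n-1}d(q)^{\otimes(n-1)}\otimes(I^{\otimes(n-1)}\otimes d(q))^{\otimes(n-1)}$ of the exceptional summand, $A_{11}$ equal to the sum of the remaining summands of $\mathcal T(z^1_1)$ with their trivial last factor deleted, and $A_{jk}$ the analogous operator obtained from $\mathcal T(z^j_k)$ for $(j,k)\ne(1,1)$, the claim gives exactly $\tilde\pi_{F,n}$; all these operators are bounded, being finite sums of tensor products of $d(q)$, $C_q$, $S$, $I$, and $\tilde\pi_{F,n}\cong\mathcal T\cong\pi_{F,n}$.

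Finally I would apply to $\tilde\pi_{F,n}$ the slice $*$-homomorphism $\mathrm{id}^{\otimes(n^2-1)}\otimes(\omega_\psi\circ\phi)$ in the last variable. By \eqref{ck} one has $\phi(d(q))=0$ and $\phi(C_q)=1$, whence $(\omega_\psi\circ\phi)(C_qS)=e^{i\psi}$ and $(\omega_\psi\circ\phi)(I)=1$, so the image is $z^1_1\mapsto e^{i\psi}B+A_{11}$ and $z^j_k\mapsto A_{jk}$ for $(j,k)\ne(1,1)$, which coincides, up to the tensor-leg permutation, with the representation $\mathcal T_\psi$ of Proposition \ref{coherent_more}. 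By that proposition it is unitarily equivalent to a coherent representation; choosing $\psi$ so that this coherent representation is $\rho^n_\varphi$ (absorbing the sign $(-1)^{n-1}$ into the identification $\psi\leftrightarrow\varphi$) yields $\tilde\rho^n_\varphi\cong\rho^n_\varphi$ of the asserted form, which completes the proof.
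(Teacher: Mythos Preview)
Your proof is correct and follows essentially the same approach as the paper's own proof: both identify $\pi_{F,n}$ with $\mathcal T$, use the box-diagram combinatorics (which the paper cites from the proof of Proposition~\ref{coherent_more}, while you spell out directly) to see that the $n$-th tensor leg carries $I$ in every summand except the single $C_qS$-summand of $\mathcal T(z_1^1)$, then permute that leg to last position, define $B$ and the $A_{jk}$ accordingly, and obtain $\tilde\rho_\varphi^n$ by applying $\omega_\psi\circ\phi$ in that leg and invoking Proposition~\ref{coherent_more} with the relation $(-1)^{n-1}e^{i\psi}=e^{i\varphi}$.
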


\begin{proof} Let $\psi\in[0,2\pi)$, $(-1)^{n-1}e^{i\psi}=e^{i\varphi}$ and
let $\mathcal T$ and $\mathcal T_\psi$ be the Fock and the coherent representations from Theorem \ref{Fock_via_suq2n} and Proposition \ref{coherent_more}, respectively.
It follows from the  proof of the previous proposition that
\begin{equation}\label{tz11}
\mathcal T(z_1^1)= R_1^1\otimes \underset{\stackrel{\uparrow}{n}}{C_qS}\otimes F_1^1+\sum_iA_1^1(i)\otimes \underset{\stackrel{\uparrow}{n}}I\otimes B_1^1(i)
\end{equation} and
\begin{equation}\label{tzkj}
\mathcal T(z_k^j)=\sum_i A_k^j(i)\otimes \underset{\stackrel{\uparrow}{n}}I\otimes B_k^j(i),\  (k,j)\ne (1,1),
\end{equation}
for some elementary tensors $R_1^1$, $F_1^1$, $A_k^j(i)$, $B_k^j(i)$.

Applying $({\rm id}\otimes\ldots\otimes \underset{\stackrel{\uparrow}{n}}{(\omega_\psi\circ \phi)}\otimes{\rm id}\otimes\ldots\otimes{\rm id})$ to (\ref{tz11})-(\ref{tzkj}) gives
$$\mathcal T_\psi(z_1^1)= e^{i\psi}R_1^1\otimes F_1^1+\sum_iA_1^1(i)\otimes B_1^1(i)$$
and
$$\mathcal T_\psi(z_k^j)=\sum_i A_k^j(i)\otimes B_k^j(i),\  (k,j)\ne (1,1).$$
Set $A_{jk}=\sum_i A_k^j(i)\otimes B_k^j(i)$ and $B=R_1^1\otimes F_1^1$ and consider the unitary operator $U:\ell^2(\mathbb Z_+)^{\otimes n^2}\to \ell^2(\mathbb Z_+)^{\otimes n^2}$ given by  $$U:f_1\otimes\ldots\otimes f_{n-1}\otimes f_n\otimes f_{n+1}\otimes\ldots\otimes f_{n^2}\mapsto f_1\otimes \ldots\otimes f_{n-1}\otimes f_{n+1}\otimes\ldots\otimes f_{n^2}\otimes f_n.$$
Clearly, $\mathcal T$ is unitary equivalent via the operator $U$ to $\tilde\pi_{F,n}$ giving the statement.
\end{proof}

\medskip

We conclude this section with a couple of lemmas on $*$-representations of ${\rm Pol}({\rm Mat}_n)_q$  induced from the coaction map, defined in the previous section.

 Recall the coaction $\mathcal D_n:{\rm Pol}({\rm Mat}_n)_q\to {\rm Pol}({\rm Mat}_n)_q\otimes \mathbb C[SU_n]_q\otimes \mathbb C[SU_n]_q$ defined in Lemma \ref{coaction} and the $*$-homomorphism  $\Pi_\varphi: {\rm Pol}({\rm Mat}_n)_q\to {\rm Pol}({\rm Mat}_{n-1})_q$, $\varphi\in[0,2\pi)$, given by (\ref{def_Pi_phi}).
Clearly, if $\rho$ is a $*$-representation of $\mathrm{Pol}(\mathrm{Mat}_{n-1})_q$, $\tau$ is a $*$-representation of $\mathrm{Pol}(\mathrm{Mat}_n)_q$, and $\pi_1$, $\pi_2$ are $*$-representations of
$\mathbb C[SU_n]_q$ then $\rho\circ\Pi_\varphi$ and  $(\tau\otimes\pi_1\otimes\pi_2)\circ\mathcal D_n$  are $*$-representations of $\mathrm{Pol}(\mathrm{Mat}_n)_q$.

\begin{lemma}\label{fock_m}
Given $*$-representations $\pi_1$, $\pi_2$ of $\mathbb C[SU_n]_q$, the $*$-representation $(\pi_{F,n}\otimes\pi_1\otimes\pi_2)\circ {\mathcal D}_n$ of $\mathrm{Pol}(\mathrm{Mat}_n)_q$ is a direct sum $\oplus_i\pi_{F,n}$ of copies of the Fock representation. In particular, $\|(\pi_{F,n}\otimes\pi_1\otimes\pi_2)\circ {\mathcal D}_n(a)\|\leq \|\pi_{F,n}(a)\|$ for any $a\in\mathrm{Pol}(\mathrm{Mat}_n)_q$.
\end{lemma}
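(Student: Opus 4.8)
The plan is to realize $\pi:=(\pi_{F,n}\otimes\pi_1\otimes\pi_2)\circ\mathcal D_n$, acting on $H:=H_{F,n}\otimes H_1\otimes H_2$ (with $\pi_r\colon\mathbb C[SU_n]_q\to B(H_r)$), as an orthogonal direct sum of copies of $\pi_{F,n}$, via the characterization — already invoked in the proof of Theorem~\ref{Fock_via_suq2n} — of the Fock representation as the unique $*$-representation of $\mathrm{Pol}(\mathrm{Mat}_n)_q$, up to unitary equivalence, possessing a cyclic vacuum vector. Since $\mathcal D_n$ is a $*$-homomorphism, $\mathcal D_n\big((z_j^i)^*\big)=\sum_{a,b}(z_b^a)^*\otimes t_{b,j}^\star\otimes t_{a,i}^\star$, and $\pi_{F,n}\big((z_b^a)^*\big)v_0=0$ yields $\pi\big((z_j^i)^*\big)(v_0\otimes\xi\otimes\eta)=0$ for all $\xi\in H_1$, $\eta\in H_2$; thus every vector of $v_0\otimes H_1\otimes H_2$ is a vacuum vector for $\pi$.

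Two steps, valid for an arbitrary $*$-representation, then do most of the work; both rest on the normal-ordering from relation~\eqref{zaa4}: every element of $\mathrm{Pol}(\mathrm{Mat}_n)_q$ is a linear combination of products $x\,y^*$ with $x$ a monomial in the $z_a^\alpha$ and $y^*$ a monomial in the $(z_a^\alpha)^*$. First, for any unit vacuum vector $v$, moving all $(z)^*$'s of a given $f\in\mathrm{Pol}(\mathrm{Mat}_n)_q$ to the right (where they annihilate $v$) shows that $H_v:=\overline{\pi\big(\mathbb C[\mathrm{Mat}_n]_q\big)v}=\overline{\pi\big(\mathrm{Pol}(\mathrm{Mat}_n)_q\big)v}$ is $\pi$-invariant and that $v$ is a cyclic vacuum vector for $\pi|_{H_v}$, whence $\pi|_{H_v}\cong\pi_{F,n}$. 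Second, a short computation with normal-ordered forms shows that $\langle\pi(g)v,\pi(g')v'\rangle=\langle v,v'\rangle\,\langle\pi_{F,n}(g)v_0,\pi_{F,n}(g')v_0\rangle$ for all vacuum vectors $v,v'$ and $g,g'\in\mathbb C[\mathrm{Mat}_n]_q$ (only the scalar component of $g'^*g$ survives: its $(z)^*$-part dies against $v$, and its nonscalar holomorphic part dies against $v'$, since $x^*$ is then a nonconstant antiholomorphic monomial). Consequently, choosing an orthonormal basis $\{\zeta_k\}$ of $H_1\otimes H_2$, the $\pi$-invariant subspaces $H_{v_0\otimes\zeta_k}$ are mutually orthogonal and each carries a copy $\pi|_{H_{v_0\otimes\zeta_k}}\cong\pi_{F,n}$.

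It remains to establish the density statement $\overline{\pi\big(\mathbb C[\mathrm{Mat}_n]_q\big)(v_0\otimes H_1\otimes H_2)}=H$, which then forces $\pi=\bigoplus_k\pi|_{H_{v_0\otimes\zeta_k}}\cong\bigoplus_k\pi_{F,n}$. Put $\mathcal H_{F,n}^{[d]}:=\pi_{F,n}\big(\mathbb C[\mathrm{Mat}_n]_q^{[d]}\big)v_0$; these are finite-dimensional, pairwise orthogonal (again by the normal-ordering computation), and $H_{F,n}$ is their Hilbert-space direct sum. Since $\pi(z_j^i)$ maps $\mathcal H_{F,n}^{[m]}\otimes H_1\otimes H_2$ into $\mathcal H_{F,n}^{[m+1]}\otimes H_1\otimes H_2$, it suffices to prove by induction on $d$ that $\overline{\pi\big(\mathbb C[\mathrm{Mat}_n]_q^{[d]}\big)(v_0\otimes H_1\otimes H_2)}=\mathcal H_{F,n}^{[d]}\otimes H_1\otimes H_2$, the case $d=0$ being trivial. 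Granting step $d-1$ and using that $\mathbb C[\mathrm{Mat}_n]_q$ is generated in degree one, it is enough to see that the map $P\colon\mathrm{span}\{z_j^i\}\otimes\mathcal H_{F,n}^{[d-1]}\otimes H_1\otimes H_2\to\mathcal H_{F,n}^{[d]}\otimes H_1\otimes H_2$, $z_j^i\otimes w\otimes\xi\otimes\eta\mapsto\pi(z_j^i)(w\otimes\xi\otimes\eta)=\sum_{a,b}\pi_{F,n}(z_b^a)w\otimes\pi_1(t_{b,j})\xi\otimes\pi_2(t_{a,i})\eta$, is onto. Up to a permutation of tensor legs, $P$ is the composition of the operator $z_j^i\otimes\xi\otimes\eta\mapsto\sum_{a,b}z_b^a\otimes\pi_1(t_{b,j})\xi\otimes\pi_2(t_{a,i})\eta$ on $\mathrm{span}\{z_j^i\}\otimes H_1\otimes H_2$ with the multiplication map $\mathrm{span}\{z_b^a\}\otimes\mathcal H_{F,n}^{[d-1]}\to\mathcal H_{F,n}^{[d]}$, $z_b^a\otimes w\mapsto\pi_{F,n}(z_b^a)w$, tensored with the identity on $H_1\otimes H_2$; the latter is onto, and the former, once one identifies $\mathrm{span}\{z_j^i\}\cong\mathbb C^n\otimes\mathbb C^n$, equals $(\mathrm{id}\otimes\pi_1)(u)\otimes(\mathrm{id}\otimes\pi_2)(u)$ with $u=(t_{i,j})\in M_n\big(\mathbb C[SU_n]_q\big)$. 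Since $S(t_{i,j})=t_{j,i}^\star$, the antipode identities give $\sum_k t_{i,k}t_{j,k}^\star=\sum_k t_{k,i}^\star t_{k,j}=\delta_{ij}$, so $u$ is unitary and each $(\mathrm{id}\otimes\pi_r)(u)$ is a unitary operator; hence $P$ is onto (no closure is lost, $\mathcal H_{F,n}^{[d]}$ being finite-dimensional). This closes the induction, giving the density statement and the lemma; the bound $\|\pi(a)\|\le\|\pi_{F,n}(a)\|$ (an equality as soon as $H_1\otimes H_2\neq0$) follows.

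The crux is the density step: one must check that $\mathcal D_n$, tensored with arbitrary $\pi_1,\pi_2$, spreads $v_0\otimes H_1\otimes H_2$ over all of $H$, and the decisive point is that on the holomorphic subalgebra the coaction is controlled by the \emph{unitary} fundamental corepresentation of $\mathbb C[SU_n]_q$, so the problem localizes in degree one and propagates by multiplicativity of the grading. The remaining ingredients — the reduction to cyclic vacuum vectors and the orthogonality of the cyclic summands — are routine consequences of the normal-ordering relation~\eqref{zaa4} together with the uniqueness of the Fock representation.
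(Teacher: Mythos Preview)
Your proof is correct and follows the same overall strategy as the paper's: identify $v_0\otimes H_1\otimes H_2$ as a space of vacuum vectors, show the cyclic subspaces they generate are mutually orthogonal, and invoke the uniqueness of the Fock representation as the unique irreducible $*$-representation with a cyclic vacuum vector.

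Where you go beyond the paper is in the density step. The paper, after establishing orthogonality of the cyclic subspaces $H_{ij}$, simply writes ``This implies the statement'' without verifying that $\bigoplus_{i,j}H_{ij}$ exhausts $H_{F,n}\otimes H_1\otimes H_2$. But the norm inequality genuinely requires this: if there were a nonzero $\tau$-invariant complement, nothing would control $\|\tau(a)\|$ there. Your inductive argument on the degree, reducing the surjectivity of $P$ to the unitarity of $(\mathrm{id}\otimes\pi_r)(u)$ for the fundamental unitary $u=(t_{i,j})$, is exactly the missing ingredient and is the decisive observation. Your formula $\langle\pi(g)v,\pi(g')v'\rangle=\langle v,v'\rangle\,\langle\pi_{F,n}(g)v_0,\pi_{F,n}(g')v_0\rangle$ is also a sharper and more portable version of the orthogonality argument than the paper's.

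In short: same approach, but your write-up is more complete; the unitarity-of-the-corepresentation argument for density is a real contribution that the paper's proof leaves to the reader.
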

\begin{proof}
Let $\{f_i\}_{i\in I}$, $\{g_j\}_{j\in J}$ be orthonormal bases for the representation spaces  $H_{\pi_1}$ and $H_{\pi_2}$ respectively and let $v_0$ denote a vacuum vector for  $\pi_{F,n}$. Recall that  $\mathbb C[\mathrm{Mat}_n]_q$ stands for the subalgebra of $\mathrm{Pol}(\mathrm{Mat}_n)_q$ generated by the holomorphic generators $z_l^k$, $k,l=1,\ldots, n$.  We have that each $v_0\otimes f_i\otimes g_j$, $i\in I$, $j\in J$, is a vacuum vector for $\tau:=(\pi_{F,n}\otimes\pi_1\otimes\pi_2)\circ \mathcal D_n$. Moreover, as  $\pi_{F,n}(a)v_0\perp v_0$ for any $a\in \mathbb C[\mathrm{Mat}_n]_q$ with   $\text{deg }a>0$,  one can easily  see  that  $v_0\otimes f_k\otimes g_l$ is orthogonal to   $\text{span}\{\tau(a)(v_0\otimes f_i\otimes g_j):a\in {\mathbb C}[\mathrm{Mat}_n]_q\}$ whenever $(i,j)\ne(k,l)$. As the latter subspace is invariant with respect to $\tau(a)$, $a\in\mathrm{Pol}(\mathrm{Mat}_n)_q$, we have
$$\text{span}\{\tau(a)(v_0\otimes f_i\otimes g_j):a\in {\mathbb C}[\mathrm{Mat}_n]_q\}=\text{span}\{\tau(a)(v_0\otimes f_i\otimes g_j):a\in \mathrm{Pol}(\mathrm{Mat}_n)_q\}$$ and the subspaces
$$\text{span}\{\tau(a)(v_0\otimes f_i\otimes g_j):a\in \mathrm{Pol}(\mathrm{Mat}_n)_q\}\text{ and } \text{span}\{\tau(a)(v_0\otimes f_k\otimes g_l):a\in \mathrm{Pol}(\mathrm{Mat}_n)_q\}$$ are orthogonal whenever $(i,j)\ne (k,l)$.
This implies the statement.
\end{proof}

Let $w=s_{n-1}\ldots s_1\in S_n$ and let $\pi_w$ denote the irreducible representation of $\mathbb C[SU_n]_q$ corresponding to $w$.
\begin{lemma}\label{coherent}
The coherent $*$-representation $\rho_{\varphi}^n$  is unitary equivalent to a sub-representation of $\tau_{\varphi,w,w}:=((\pi_{F,n-1}\circ\Pi_\varphi)\otimes\pi_w\otimes\pi_w)\circ\mathcal D_n$.
\end{lemma}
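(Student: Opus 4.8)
The plan is to produce, inside the Hilbert space on which $\tau_{\varphi,w,w}$ acts, a unit vector $\Omega$ that satisfies exactly the relations defining a coherent vector with parameter $\varphi$, and then to invoke the uniqueness of coherent representations: the closed cyclic subspace $\mathcal H:=\overline{\tau_{\varphi,w,w}(\mathrm{Pol}(\mathrm{Mat}_n)_q)\Omega}$ is $\tau_{\varphi,w,w}$-invariant, the restriction of $\tau_{\varphi,w,w}$ to it is a bounded $*$-representation with a cyclic vector obeying the coherent relations, and hence by \cite[Proposition 1.3.3]{jsw} it is unitarily equivalent to $\rho_\varphi^n$.

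First, write $\tau:=\tau_{\varphi,w,w}$ out explicitly. Combining the formula for $\mathcal D_n$ from Lemma \ref{coaction} with \eqref{def_Pi_phi} --- so that $\Pi_\varphi(z_b^a)$ equals $q^{-1}z_b^a$ for $a,b<n$, equals $e^{i\varphi}$ for $a=b=n$, and vanishes otherwise --- one gets
$$\tau(z_j^i)=q^{-1}\sum_{a,b=1}^{n-1}\pi_{F,n-1}(z_b^a)\otimes\pi_w(t_{b,j})\otimes\pi_w(t_{a,i})+e^{i\varphi}\,I\otimes\pi_w(t_{n,j})\otimes\pi_w(t_{n,i}).$$
Take $\Omega:=v_0\otimes e_0^{\otimes(n-1)}\otimes e_0^{\otimes(n-1)}$, where $v_0$ denotes the vacuum vector of $\pi_{F,n-1}$. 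Because $\pi_{F,n-1}(z_b^a)^*v_0=0$ for all $a,b\le n-1$, the adjoint of the above formula annihilates the whole sum when applied to $\Omega$, leaving
$$\tau(z_j^i)^*\Omega=e^{-i\varphi}\,v_0\otimes\pi_w(t_{n,j})^*e_0^{\otimes(n-1)}\otimes\pi_w(t_{n,i})^*e_0^{\otimes(n-1)}.$$

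The heart of the matter is then the computation of $\pi_w(t_{n,j})$ for $w=s_{n-1}\cdots s_1$. Expanding $\pi_w=\pi_{s_{n-1}}\otimes\cdots\otimes\pi_{s_1}$ through iterated comultiplication, an analysis of the nonzero terms entirely analogous to (but simpler than) the box-diagram bookkeeping for $\pi_u$ above applies: at the $r$-th tensor factor $\pi_{s_{n-r}}$ the running index may only move within the window $\{n-r,n-r+1\}$, so along a nonzero term the indices descend $n=k_0\ge k_1\ge\cdots\ge k_{n-1}=j$; exactly one such chain survives for each $j$, giving
$$\pi_w(t_{n,j})=(-1)^{n-j}d(q)^{\otimes(n-j)}\otimes C_qS\otimes I^{\otimes(j-2)}\quad(2\le j\le n),\qquad\pi_w(t_{n,1})=(-1)^{n-1}d(q)^{\otimes(n-1)}.$$
Since $C_qe_0=0$, the factor $S^*C_q=(C_qS)^*$ annihilates $e_0$, so $\pi_w(t_{n,j})^*e_0^{\otimes(n-1)}=0$ for all $j\ge 2$; and since $d(q)e_0=e_0$, one has $\pi_w(t_{n,1})^*e_0^{\otimes(n-1)}=(-1)^{n-1}e_0^{\otimes(n-1)}$. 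Substituting back, $\tau(z_j^i)^*\Omega=0$ whenever $(i,j)\ne(1,1)$, while $\tau(z_1^1)^*\Omega=e^{-i\varphi}\big((-1)^{n-1}\big)^2\Omega=e^{-i\varphi}\Omega$. Thus $\Omega$ is a unit coherent vector with parameter $\varphi$ for $\tau$, and, $\mathcal H$ being a cyclic invariant subspace, the restriction of $\tau$ to $\mathcal H$ is unitarily equivalent to $\rho_\varphi^n$.

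The one genuinely technical step is the identification of the operators $\pi_w(t_{n,j})$; once the sliding-window structure of the $\pi_{s_r}$'s (equivalently, the box-diagram formalism already developed) is at hand, this reduces to a routine finite bookkeeping, and the degenerate case $n=2$ (where $\pi_w=\Pi$) is checked by inspection.
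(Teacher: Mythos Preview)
Your proof is correct and follows essentially the same approach as the paper: you choose the same vector $\Omega=v_0\otimes e_0^{\otimes(n-1)}\otimes e_0^{\otimes(n-1)}$, reduce via the vacuum property of $v_0$ to the computation of $\pi_w(t_{n,j})^*e_0^{\otimes(n-1)}$, and then invoke uniqueness of the coherent representation. The only difference is cosmetic---you compute the operators $\pi_w(t_{n,j})$ explicitly before applying the adjoint to $e_0^{\otimes(n-1)}$, whereas the paper works directly with $\pi_w(t_{n,i}^*)e_0^{(n-1)}$---but the underlying sliding-window argument is identical.
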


 \begin{proof}
 It is enough to prove that $\tau_{\varphi,w,w}$ possesses a vector $\Omega$ such that
$\tau_{\varphi,w,w}(z_j^i)^*\Omega=0$ if $(i,j)\ne(1,1)$ and  $\tau_{\varphi,w,w}(z_1^1)^*\Omega=e^{-i\varphi}\Omega$.

 The representation $\tau_{\varphi,w,w}$ acts on $H_{F,n-1}\otimes \ell^2(\mathbb Z_+)^{\otimes(n-1)}\otimes \ell^2(\mathbb Z_+)^{\otimes(n-1)}$.
Let $e_0^{(n-1)}=\underbrace{e_0\otimes\ldots\otimes e_0}_{n-1}$ and $\Omega=v_0\otimes e_0^{(n-1)}\otimes e_0^{(n-1)}$, where $v_0$ is a vacuum vector for $\pi_{F,n-1}$. Then
$$\tau_{\varphi,\omega,\omega}(z_1^1)^*\Omega=\sum_{a,b=1}^n\pi_{F,n-1}(\Pi_\varphi(z_b^a))^*v_0\otimes\pi_w(t_{b,1}^*)e_0^{(n-1)}\otimes\pi_w(t_{a,1}^*)e_0^{(n-1)}.$$
It follows from the definition of the Fock representation and the $*$-homomorphism $\Pi_\varphi$  that  $\pi_{F,n-1}(\Pi_\varphi(z_a^b))^*v_0=0$ whenever $(a,b)\ne (n,n)$ and $\pi_{F,n-1}(\Pi_\varphi(z_n^n))^*v_0=e^{-i\varphi}v_0$.  As
$$\pi_w(t_{n,i}^*)e_0^{(n-1)}=\sum_{k_1,\ldots, k_{n-2}=1}^n\pi_{s_{n-1}}(t_{n,k_1}^*)e_0\otimes \pi_{s_{n-2}}(t_{k_1,k_2}^*)e_0\otimes\ldots\otimes \pi_{s_{1}}(t_{k_{n-2},i}^*)e_0,$$
 $\pi_{s_{k-1}}(t_{k,l}^*)e_0\ne 0$ if and only if $l=k-1$ and $\pi_{s_{k-1}}(t_{k,k-1}^*)e_0=-e_0$, we obtain
$$\pi_w(t_{n,1}^*)e_0^{(n-1)}=\pi_{s_{n-1}}(t_{n,n-1}^*)e_0\otimes\pi_{s_{n-2}}(t_{n-1,n-2}^*)e_0\otimes\ldots\otimes\pi_{s_1}(t_{2,1}^*)e_0=(-1)^{n-1}e_0^{(n-1)}$$
and $\pi_w(t_{n,i}^*)e_0^{(n-1)}=0$ if $i>1$.

Hence
$$\tau_{\varphi,\omega,\omega}(z_1^1)^*\Omega=e^{-i\varphi}v_0\otimes\pi_w(t_{n,1}^*)e_0^{(n-1)}\otimes\pi_w(t_{n,1}^*)e_0^{(n-1)}=e^{-i\varphi}\Omega.$$
Similarly
 $$\tau_{\varphi,\omega,\omega}(z_i^j)^*\Omega=e^{-i\varphi}v_0\otimes\pi_w(t_{n,i}^*)e_0^{(n-1)}\otimes\pi_w(t_{n,j}^*)e_0^{(n-1)}=0\text{ if }(i,j)\ne (1,1).$$
 \end{proof}

\section{Shilov boundary}
The Shilov boundary ideal is a non-commutative analog of the Shilov boundary of a compact Hausdorff space $X$ relative to a uniform subalgebra $\mathcal A$ of  $C(X)$, which is, by definition, the smallest closed subset $K$ of $X$ such that every function in $\mathcal A$ attains its maximum modulus on
 $K$. The notion goes back to the fundamental work by W. Arveson, \cite{Ar}, which gave birth to several directions in mathematics.

A typical ``commutative" example that we should keep in mind is the algebra $\mathcal A$ of holomorphic functions on the open unit ball $\mathbb D_n=\{{\bf z}\in M_n(\mathbb C): {\bf z}{\bf z}^*<1\}$ which are continuous on its closure. The Shilov boundary of $\mathbb D_n$ relative to $\mathcal A$ is known to be the space of unitary $n$ by $n$ matrices.

To introduce the non-commutative version of the Shilov boundary
recall that if $V$ is a subspace of a $C^*$-algebra $\mathcal B$, $M_n(V)$ is the space of $n$ by $n$ matrices in $V$ with norm induced by the $C^*$-norm on $M_n(\mathcal B)$, then any linear map $\phi$ from $V$ to another $C^*$-algebra $\mathcal C$ induces a linear map $\phi^{(n)}:M_n(V)\to M_n(\mathcal C)$ by letting
$$\phi^{(n)}((a_{i,j})_{i,j})=(\phi(a_{i,j}))_{i,j}, \ (a_{i,j})_{i,j}\in M_n(V).$$
The linear map is called  a complete isometry if  $\phi^{(n)}$ is an isometry for any $n$.

Assume $V$ contains the identity of $\mathcal B$ and generates $\mathcal B$ as a $C^*$-algebra.
The following definition was given by Arveson \cite{Ar}.

\begin{definition}
A closed two-sided ideal $J$ in $\mathcal B$ is called a {\it boundary ideal} for $V$ if the canonical quotient map $q:\mathcal B\to \mathcal{B}/J$ is a complete isometry when restricted  to $V$.  A boundary ideal is called a {\it Shilov boundary} or a {\it Shilov  boundary ideal} for $V$ if it contains every other boundary ideal.
\end{definition}

Arveson demonstrated the existence of the Shilov boundary in several situations of particular interests. In \cite{hamana} Hamana proved that the boundary always exists for any such subspace $V$.
Another proof using dilation arguments was given by Dritschel and McCullough in \cite{dritschel_mccullough} (see also \cite{arveson_notes}). As the operator space structure on commutative $C^*$-algebras is minimal  one has  that a closed subset $K$ of a compact Hausdorff space $X$ is the Shilov boundary relative to a uniform subalgebra of $C(X)$ if and only if the ideal $J=\{f\in C(X): f|_K=0\}$ is the Shilov boundary ideal. This shows that the above definition indeed generalises the commutative notion.

If $J$ is the Shilov ideal for a unital subalgebra $V$ of $C^*(V)$, then $C^*(V)/J$ provides a realization of the {\it $C^*$-envelope} $C_e^*(V)$ of $V$, a $C^*$-algebra which is determined by the property: there exists a completely isometric representation $\gamma: V\to  C_e^*(V)$  such that $C^*_e(V)=C^*(\gamma(V))$  and if $\rho$ is any other completely isometric representation, then there exists an onto representation
$\pi:C^*(\rho(V))\to C^*(\gamma(V))$ such that $\pi(\rho(a))=\gamma(a)$ for all $a\in V$.

 \medskip
In this section we shall describe the Shilov boundary ideal for the closed subalgebra of $C_F(\overline{\mathbb D}_n)_q$ generated by the ``holomorphic" generators $\pi_{F,n}(z^i_j)$, $i,j=1,\ldots,n$.

In $\mathrm{Pol}(\mathrm{Mat}_n)_q$ consider the two-sided ideal $J_n$ generated by $$\sum_{j=1}^n q^{2n-\alpha-\beta}z_j^\alpha(z_j^\beta)^*-\delta^{\alpha\beta}, \alpha,\beta=1,\ldots,n,$$ where $\delta^{\alpha\beta}$ is the Kronecker symbol.
The ideal $J_n$ is a $*$-ideal, i.e. $J_n=J_n^*$. The quotient algebra $\mathrm{Pol}(S(\mathbb D_n))_q:=\mathrm{Pol}(\mathrm{Mat}_n)_q/J_n$ is a $U_q{\mathfrak su}_{n,n}$-module $*$-algebra called the polynomial algebra  on the Shilov boundary of a quantum matrix ball.
The canonical homomorphism
$$j_q:\mathrm{Pol}(\mathrm{Mat}_n)_q\to \mathrm{Pol}(S(\mathbb D_n))_q$$
is a $q$-analog of the restriction operator which maps a polynomial on the ball $\mathbb D_n=\{{\bf z}\in \mathrm{Mat}_n:{\bf z}{\bf z}^*< 1\}$ to its restriction to the Shilov boundary $S(\mathbb D_n)=\{{\bf z}\in \mathrm{Mat}_n: {\bf z}{\bf z}^*=1\}$.

The $*$-algebra $\mathrm{Pol}(S(\mathbb D_n))_q$ was introduced by L.Vaksman in \cite{vaksman_shilov1} and shown to be isomorphic to the $*$-algebra $(\mathbb C[GL_n]_q,\ast)\simeq \mathbb C[U_n]_q$ introduced in section \ref{algebra}; the isomorphism $\Psi: \mathrm{Pol}(S(\mathbb D_n))_q\to (\mathbb C[GL_n]_q,\ast)$ is given by $z^i_j+J_n\to z^i_j$, $i,j=1,\ldots,n$ (see \cite[Theorem 2.2, Proposition 6.1]{vaksman_shilov1}).
The author of \cite{vaksman_shilov1} used an algebraic approach to introduce the $q$-analog of the Shilov boundary  and posed a question whether this notion would coincide with the ``analytic" Shilov boundary of Arveson. In this section we shall give an  affirmative answer to that question for general value of $n$.

Let $A(\mathbb D_n)_q$ be the closed (non-involutive) subalgebra of $C_F(\overline{\mathbb D}_n)_q$ generated by $\pi_{F,n}(z_j^i)$, $i,j=1,\ldots,n$. Let $\bar J_n$ be the closure of $J_n$ in $C_F(\overline{\mathbb D}_n)_q$ and write $j_q$ also for the canonical quotient map $C_F(\overline{\mathbb D}_n)_q\to  C_F(\overline{\mathbb D}_n)_q/\bar J_n$.

We are now ready to state the main theorems of the paper.

\begin{theorem}\label{boundary_ideal}
The ideal $\bar J_n$ is a boundary ideal for $A(\mathbb D_n)_q$.
\end{theorem}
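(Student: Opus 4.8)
The plan is to show that the quotient map $j_q : C_F(\overline{\mathbb D}_n)_q \to C_F(\overline{\mathbb D}_n)_q/\bar J_n$ is completely isometric on $A(\mathbb D_n)_q$, and the natural strategy (as hinted in the introduction) is dilation-theoretic: we want to find, for every matrix $(a_{k,l})$ over $A(\mathbb D_n)_q$, a $*$-representation $\sigma$ of $C_F(\overline{\mathbb D}_n)_q$ that \emph{annihilates $\bar J_n$} and yet satisfies $\|\sigma^{(m)}((a_{k,l}))\| = \|\pi_{F,n}^{(m)}((a_{k,l}))\|$, i.e.\ $\sigma$ restricted to the holomorphic subalgebra is completely isometric (equivalently, compresses to the Fock representation on the holomorphic part). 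Since $j_q$ factors through any such $\sigma$, this gives the complete isometry of $j_q|_{A(\mathbb D_n)_q}$, which is exactly the assertion that $\bar J_n$ is a boundary ideal.

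The key construction should be the representation $\tau_{\varphi,w,w} = ((\pi_{F,n-1}\circ \Pi_\varphi)\otimes \pi_w \otimes \pi_w)\circ \mathcal D_n$ of Lemma~\ref{coherent} together with the related induced representations built from the coaction $\mathcal D_n$. First I would verify that a representation of this induced type kills the generators $\sum_{j} q^{2n-\alpha-\beta} z_j^\alpha (z_j^\beta)^* - \delta^{\alpha\beta}$ of $J_n$; this is because after applying $\mathcal D_n$ and then $\Pi_\varphi$ (which collapses the last row/column) the relevant sum telescopes, using the unitarity relations $\sum_b t_{b,\alpha}^\star t_{b,\beta}$-type identities in $\mathbb C[SU_n]_q$ and relation~\eqref{detqz}; so $\tau_{\varphi,w,w}$ (and more generally the representations $(\rho\circ\Pi_\varphi\otimes\pi_1\otimes\pi_2)\circ\mathcal D_n$) annihilate $J_n$, hence factor through $C_F(\overline{\mathbb D}_n)_q/\bar J_n$. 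Second, I would use Lemma~\ref{coherent} (the coherent representation $\rho_\varphi^n$ embeds in $\tau_{\varphi,w,w}$) and Lemma~\ref{Fock_coherent} (which exhibits $\pi_{F,n}$ as $B\otimes C_qS + A_{11}\otimes I$ on the holomorphic generators, against $e^{i\varphi}B + A_{11}$ for $\rho_\varphi^n$) to show that on $A(\mathbb D_n)_q$ the Fock representation is a "compression/summand up to the extra $C_qS$ tensor leg" of these boundary-annihilating representations; concretely, $C_qS$ is a pure isometry, and the map $z_1^1 \mapsto e^{i\varphi}B + A_{11}$ integrated over $\varphi$, or realized via the shift, dilates to $B\otimes C_qS + A_{11}\otimes I$. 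The standard dilation fact that a row contraction (here the holomorphic generators form a "commuting-up-to-$q$" tuple whose Fock model is the minimal isometric dilation) has its minimal dilation completely isometric on the non-selfadjoint algebra it generates is what converts this into the norm equality.

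Concretely the argument would run: (i) by Lemma~\ref{fock_m}, precomposing $\pi_{F,n}$ with $\mathcal D_n$ and any $\pi_1,\pi_2$ only reproduces direct sums of $\pi_{F,n}$, so such representations are already completely isometric on everything; (ii) by Lemma~\ref{Fock_coherent}, $\tilde\pi_{F,n}$ acts on the holomorphic generators as $A_{jk}\otimes I$ for $(j,k)\neq(1,1)$ and $B\otimes C_qS + A_{11}\otimes I$ for $(1,1)$, and since $C_qS$ is an isometry with $C^*(C_qS)\ni$ the unit, the representation $z_1^1\mapsto e^{i\varphi}B+A_{11}$, $z_k^j\mapsto A_{jk}$ obtained by replacing $C_qS$ with the scalar $e^{i\varphi}$ has $\tilde\pi_{F,n}$ as a dilation; averaging/using that these boundary representations $\tilde\rho_\varphi^n$ sit inside the boundary-annihilating $\tau_{\varphi,w,w}$ (by Lemma~\ref{coherent}) and that a minimal isometric dilation is completely isometric on the generated non-selfadjoint algebra, we get $\|j_q^{(m)}(x)\| \geq \|\tilde\rho_\varphi^n{}^{(m)}(x)\|$ for all $\varphi$, and taking a suitable direct integral / sup over $\varphi$ recovers $\|\pi_{F,n}^{(m)}(x)\|$ for $x\in M_m(A(\mathbb D_n)_q)$. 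Since $j_q$ is automatically contractive, this yields the complete isometry.

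The main obstacle I anticipate is step (i)/the telescoping: proving cleanly that the induced representations $(\rho\circ\Pi_\varphi\otimes\pi_w\otimes\pi_w)\circ\mathcal D_n$ genuinely annihilate the defining elements of $J_n$ — this requires a careful computation inside $\mathbb C[SU_n]_q$ with the $q$-deformed orthogonality relations for the $t_{i,j}$ and the precise powers of $q$ in the generators $\sum_j q^{2n-\alpha-\beta} z_j^\alpha (z_j^\beta)^*$, and making sure the collapse induced by $\Pi_\varphi$ interacts correctly with the antipode/involution compatibility $(\xi(f))^* = S(\xi)^*(f^*)$. The secondary difficulty is the dilation bookkeeping: ensuring that the "compression to the Fock representation on the holomorphic part" is genuinely a \emph{complete} isometry and not merely an isometry, which is why one must phrase everything at the level of $M_m(\cdot)$ and invoke the Dritschel--McCullough dilation machinery (\cite{dritschel_mccullough}) rather than a bare norm estimate.
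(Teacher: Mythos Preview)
Your overall dilation strategy is exactly the one the paper uses, and most of the pieces you identify (Lemmas \ref{Fock_coherent}, \ref{coherent}, \ref{fock_m}, \ref{fock_2}) are the right ones. However, there is a genuine gap at the point you yourself flag as the ``main obstacle'': the representation $\tau_{\varphi,w,w}=((\pi_{F,n-1}\circ\Pi_\varphi)\otimes\pi_w\otimes\pi_w)\circ\mathcal D_n$ does \emph{not} annihilate $J_n$. Lemma \ref{shilov} (or your telescoping computation) only shows that $\Pi_\varphi(J_n)\subset J_{n-1}$, so $((\tau\circ\Pi_\varphi)\otimes\pi_1\otimes\pi_2)\circ\mathcal D_n$ kills $J_n$ \emph{provided} $\tau(J_{n-1})=0$. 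But $\pi_{F,n-1}$ is faithful on $\mathrm{Pol}(\mathrm{Mat}_{n-1})_q$, so $\pi_{F,n-1}(J_{n-1})\neq 0$, and the claimed factorization of $\tau_{\varphi,w,w}$ through $j_q$ fails. Consequently your chain $\|j_q^{(m)}(x)\|\geq\|\tau_{\varphi,w,w}^{(m)}(x)\|\geq\|\rho_\varphi^{n\,(m)}(x)\|$ breaks at the first inequality.

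The missing idea is \emph{induction on $n$}. The paper argues as follows: for $n=1$ the Sz.--Nagy dilation of $C_qS$ to a unitary already gives a boundary dilation of $\pi_{F,1}$. For the step $n\to n+1$: by Lemma \ref{dilation_fock}, $\pi_{F,n+1}\prec\Psi$ on $\mathbb C[\mathrm{Mat}_{n+1}]_q$ with $\Psi$ a direct integral of coherent representations $\rho_\varphi^{n+1}$; by Lemma \ref{coherent}, each $\rho_\varphi^{n+1}\prec\tau_{\varphi,w,w}$; now by the \emph{inductive hypothesis} there is a $*$-representation $\Pi$ of $\mathrm{Pol}(\mathrm{Mat}_n)_q$ with $\Pi(J_n)=0$ and $\pi_{F,n}\prec\Pi$ on $\mathbb C[\mathrm{Mat}_n]_q$, whence $\tau_{\varphi,w,w}\prec((\Pi\circ\Pi_\varphi)\otimes\pi_w\otimes\pi_w)\circ\mathcal D_{n+1}$ on $\mathbb C[\mathrm{Mat}_{n+1}]_q$; and \emph{now} Lemma \ref{shilov} applies (since $\Pi(J_n)=0$) to conclude this last representation annihilates $J_{n+1}$. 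Transitivity of $\prec$ and Lemma \ref{fock_2} finish the argument. In short, your single dilation step Fock $\to$ coherent $\to\tau_{\varphi,w,w}$ lands in a representation that still sees the interior; one must iterate, peeling off one ``$C_qS$-leg'' at a time, and this is precisely the induction you omit.
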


\begin{theorem}\label{shilov_boundary}
The ideal $\bar J_n$ is the Shilov boundary ideal for $A(\mathbb D_n)_q$.
\end{theorem}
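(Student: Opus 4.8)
\noindent\textit{Proof sketch.}
The plan is to derive Theorem~\ref{shilov_boundary} from Theorem~\ref{boundary_ideal} via the dilation-theoretic picture of the $C^*$-envelope due to Dritschel and McCullough \cite{dritschel_mccullough} (see also \cite{arveson_notes}). Since $\bar J_n$ is already a boundary ideal, it is \emph{the} Shilov boundary ideal exactly when $C_F(\overline{\mathbb D}_n)_q/\bar J_n$ is the $C^*$-envelope of $A(\mathbb D_n)_q$, and this is equivalent to the quotient representation $\sigma:=j_q|_{A(\mathbb D_n)_q}\colon A(\mathbb D_n)_q\to C_F(\overline{\mathbb D}_n)_q/\bar J_n$ --- which is completely isometric by Theorem~\ref{boundary_ideal} --- being a \emph{maximal} representation of the operator algebra $A(\mathbb D_n)_q$, i.e.\ having no nontrivial dilation (equivalently, the unique extension property). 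So it suffices to exhibit a completely isometric, maximal representation of $A(\mathbb D_n)_q$ whose generated $C^*$-algebra is $C_F(\overline{\mathbb D}_n)_q/\bar J_n$.

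To this end I would construct a bounded $*$-representation $\Theta$ of $\mathrm{Pol}(\mathrm{Mat}_n)_q$ on a Hilbert space $\mathcal K$ with the following properties: (i) $\Theta$ annihilates $J_n$, so that $\Theta$ factors through $\mathrm{Pol}(S(\mathbb D_n))_q\cong\mathbb C[U_n]_q$ and induces a $*$-representation $\bar\Theta$ of $C_F(\overline{\mathbb D}_n)_q/\bar J_n$; (ii) $\bar\Theta$ is faithful, whence $C^*\big(\Theta(z_j^i):1\le i,j\le n\big)=\bar\Theta\big(C_F(\overline{\mathbb D}_n)_q/\bar J_n\big)\cong C_F(\overline{\mathbb D}_n)_q/\bar J_n$; and (iii) there is a subspace $\mathcal H\subseteq\mathcal K$, coinvariant for every $\Theta(z_j^i)$, such that the compression of $\Theta$ to $\mathcal H$, restricted to the holomorphic subalgebra $\mathbb C[\mathrm{Mat}_n]_q$, is unitarily equivalent to $\pi_{F,n}$. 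The representation $\Theta$ is to be assembled from the material of Section~\ref{representations}: the coaction $\mathcal D_n$ of Lemma~\ref{coaction} together with the homomorphisms $\Pi_\varphi$ (using that $\Pi_\varphi$ carries the generators of $J_n$ into $J_{n-1}$, so $\pi_{F,n-1}\circ\Pi_\varphi$ kills $J_n$, and that $\mathcal D_n$ respects the Shilov relations) produces representations annihilating $J_n$; property (iii) comes from Lemma~\ref{Fock_coherent}, which writes the Fock representation as $z_1^1\mapsto B\otimes C_qS+A_{11}\otimes I$, $z_k^j\mapsto A_{jk}\otimes I$ for $(j,k)\neq(1,1)$, with the sole ``interior'' factor $C_qS$ occupying one tensor slot: replacing $C_qS$ by its minimal unitary (boundary) dilation --- concretely, integrating the coherent representations $\rho^n_\varphi$ of Proposition~\ref{coherent_more} over $\varphi\in[0,2\pi)$ with suitable multiplicity --- keeps $\mathcal H$ coinvariant and realises the Fock representation of $\mathbb C[\mathrm{Mat}_n]_q$ as a compression; carrying this through all the variables, using Lemma~\ref{coherent} to descend from level $n$ to level $n-1$, yields a representation that kills $J_n$ and is large enough to be faithful on $C_F(\overline{\mathbb D}_n)_q/\bar J_n$.

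Granting (i)--(iii) and the maximality of $\Theta|_{A(\mathbb D_n)_q}$, Theorem~\ref{shilov_boundary} follows quickly. By (ii), $\Theta|_{A(\mathbb D_n)_q}=\bar\Theta\circ\sigma$ is completely isometric (a faithful $*$-representation composed with a complete isometry) and generates $C_F(\overline{\mathbb D}_n)_q/\bar J_n$; being completely isometric and maximal, it realises the $C^*$-envelope, hence $C_F(\overline{\mathbb D}_n)_q/\bar J_n\cong C^*_e(A(\mathbb D_n)_q)$, which is precisely the assertion that $\bar J_n$ is the Shilov boundary ideal. Since $C_F(\overline{\mathbb D}_n)_q/\bar J_n$ is a $C^*$-completion of $\mathrm{Pol}(S(\mathbb D_n))_q\cong\mathbb C[U_n]_q$, the Corollary identifying $C^*_e(A(\mathbb D_n)_q)$ with $C(U_q(n))$ drops out as well.

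The heart of the matter --- and the step I expect to be hardest --- is the maximality of $\Theta|_{A(\mathbb D_n)_q}$. Morally this should follow from (i): since $\Theta(J_n)=0$, the weighted row $\big(\Theta(z_j^\alpha)\big)_{\alpha,j}$ satisfies $\sum_{j}q^{2n-\alpha-\beta}\Theta(z_j^\alpha)\Theta(z_j^\beta)^*=\delta^{\alpha\beta}$, i.e.\ ``lives on the quantum Shilov boundary'', and a representation on the boundary should admit no proper dilation. Turning this into a proof requires genuine dilation-theoretic work: I would either verify the unique extension property directly --- showing that any unital completely positive map on $C_F(\overline{\mathbb D}_n)_q$ agreeing with $\Theta$ on the holomorphic generators is necessarily equal to $\Theta$ and multiplicative --- or argue by induction on $n$, the case $n=1$ being the classical fact that the minimal unitary dilation of the contraction $C_qS$ is a maximal representation of the disc algebra, and the inductive step exploiting the coaction $\mathcal D_n$ and Lemma~\ref{coherent} to reduce the dilation problem at level $n$ to that at level $n-1$. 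The combinatorial input underlying all of this is the explicit box-diagram form of $\pi_{F,n}$ from Theorem~\ref{Fock_via_suq2n} and the structural identities of Lemmas~\ref{fock_m}, \ref{coherent} and \ref{Fock_coherent}; a secondary technical point is to choose the auxiliary ingredients of $\Theta$ so that faithfulness (ii) and the compression property (iii) hold simultaneously.
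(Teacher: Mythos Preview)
Your plan takes a genuinely different route from the paper, and the hard step you flag is in fact a real gap. You propose to establish that $C_F(\overline{\mathbb D}_n)_q/\bar J_n$ is the $C^*$-envelope by producing a faithful boundary representation $\Theta$ and proving it is \emph{maximal} in the Dritschel--McCullough sense. But you never prove maximality: the heuristic ``the row relations $\sum_j q^{2n-\alpha-\beta}\Theta(z_j^\alpha)\Theta(z_j^\beta)^*=\delta^{\alpha\beta}$ force the representation to live on the boundary, hence admit no proper dilation'' is exactly the statement to be proved, not an argument for it. Your two suggested strategies (direct UEP verification, or induction via $\mathcal D_n$ and Lemma~\ref{coherent}) are plausible outlines but neither is carried out, and neither is obviously easier than the theorem itself. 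Note also that your steps (i) and (iii) are essentially the construction already performed in the proof of Theorem~\ref{boundary_ideal}; the new content you would need is precisely (ii) together with maximality, and that content is missing.

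The paper bypasses dilation theory entirely for this step with a short algebraic argument that you overlooked. Modulo $J_n$ one has the identities (see \eqref{star_pol}, \eqref{detqz})
\[
(z_j^i)^* \equiv (-q)^{i+j-2n}(\det\nolimits_q\mathbf z)^{-1}\det\nolimits_q\mathbf z_j^i,
\qquad
(\det\nolimits_q\mathbf z)^*\det\nolimits_q\mathbf z \equiv q^{-n(n-1)},
\]
with $\det_q\mathbf z$ central. Hence every $x\in\mathrm{Pol}(\mathrm{Mat}_n)_q$ satisfies $x+\bar J_n=(\det_q\mathbf z)^{-k}a+\bar J_n$ for some $k\ge 0$ and some \emph{holomorphic} $a\in\mathbb C[\mathrm{Mat}_n]_q$. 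If $I\supset\bar J_n$ is any boundary ideal, then $\|a+I\|=\|a\|=\|a+\bar J_n\|$ for holomorphic $a$; since $(\det_q\mathbf z)^{-k}$ is, up to the scalar $q^{kn(n-1)/2}$, unitary and central modulo $\bar J_n$ (hence also modulo $I$), one gets $\|x+\bar J_n\|=\|q^{kn(n-1)/2}a+\bar J_n\|=\|q^{kn(n-1)/2}a+I\|=\|x+I\|$ for all such $x$, whence $I=\bar J_n$. This is the whole proof. The key idea your sketch is missing is that in the quotient the antiholomorphic generators are holomorphic polynomials multiplied by the inverse of a central unitary already lying in $A(\mathbb D_n)_q$, so an isometry on the holomorphic part automatically extends to the full $*$-algebra.
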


We begin by  proving  several auxiliary lemmas.
\begin{lemma}\label{shilov}
 Let $\tau$ be a $*$-representation of $\mathrm{Pol}(\mathrm{Mat}_{n-1})_q$ that annihilates the ideal $J_{n-1}$ of  $\mathrm{Pol}({\mathrm Mat}_{n-1})_q$. Then $((\tau\circ\Pi_\varphi)\otimes \pi_1\otimes \pi_2)\circ\mathcal D_{n}$ is a $*$-representation of ${\mathrm{Pol}}(\mathrm{Mat}_{n})_q$ that annihilates the ideal $J_n$ of ${\mathrm{Pol}}(\mathrm{Mat}_{n})_q$ for any $*$-representations $\pi_1$, $\pi_2$ of ${\mathbb C}[SU_n]_q$.
\end{lemma}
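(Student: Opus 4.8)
The plan is to show that the $*$-representation $\sigma:=((\tau\circ\Pi_\varphi)\otimes\pi_1\otimes\pi_2)\circ\mathcal D_n$ (a $*$-representation of $\mathrm{Pol}(\mathrm{Mat}_n)_q$ as observed before the statement) annihilates each generator
$$g_{\alpha\beta}:=\sum_{j=1}^n q^{2n-\alpha-\beta}z_j^\alpha(z_j^\beta)^*-\delta^{\alpha\beta},\qquad \alpha,\beta=1,\dots,n,$$
of $J_n$; since $J_n$ is the two-sided ideal generated by the $g_{\alpha\beta}$ and $\sigma$ is multiplicative, this suffices. I would split the work into two independent facts: \textbf{(a)} $\mathcal D_n(J_n)\subseteq J_n\otimes\mathbb C[SU_n]_q\otimes\mathbb C[SU_n]_q$, and \textbf{(b)} $(\tau\circ\Pi_\varphi)(J_n)=\{0\}$. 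Granting these, $\sigma(J_n)=((\tau\circ\Pi_\varphi)\otimes\pi_1\otimes\pi_2)(\mathcal D_n(J_n))$ is contained in the span of elementary tensors $(\tau\circ\Pi_\varphi)(x)\otimes\pi_1(s)\otimes\pi_2(t)$ with $x\in J_n$, hence is $\{0\}$ by (b); note that $\pi_1$ plays no role, in accordance with the statement.

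For \textbf{(a)} I would apply $\mathcal D_n$ to $g_{\alpha\beta}$. Since $\mathcal D_n$ is a $*$-homomorphism (Lemma \ref{coaction}), $\mathcal D_n((z_j^\beta)^*)=\sum_{c,d}(z_d^c)^*\otimes t_{d,j}^\star\otimes t_{c,\beta}^\star$; multiplying out, summing over $j$, and using the antipode identity $\sum_j t_{b,j}t_{d,j}^\star=\delta_{bd}$ (which follows from $S(t_{i,j})=t_{j,i}^\star$ in \eqref{anti-star} together with the antipode axiom) collapses the middle tensor leg to the unit, so that
$$\mathcal D_n(g_{\alpha\beta})=q^{2n-\alpha-\beta}\sum_{a,c=1}^n\Big(\sum_{b=1}^n z_b^a(z_b^c)^*\Big)\otimes 1\otimes t_{a,\alpha}t_{c,\beta}^\star-\delta^{\alpha\beta}\,1\otimes 1\otimes 1.$$
Rewriting $\sum_b z_b^a(z_b^c)^*=q^{a+c-2n}(g_{ac}+\delta^{ac})$, the $g_{ac}$-terms contribute an element of $J_n\otimes\mathbb C[SU_n]_q\otimes\mathbb C[SU_n]_q$, while the remaining scalar part is $1\otimes1\otimes\big(\sum_{a=1}^n q^{2a-\alpha-\beta}t_{a,\alpha}t_{a,\beta}^\star\big)-\delta^{\alpha\beta}\,1\otimes1\otimes1$, which vanishes by the ``twisted unitarity'' relation $\sum_{a=1}^n q^{2a-\alpha-\beta}t_{a,\alpha}t_{a,\beta}^\star=\delta^{\alpha\beta}$ in $\mathbb C[SU_n]_q$; the latter is obtained by applying the $*$-automorphism $\theta$ of \eqref{auto} to the standard orthogonality relation $\sum_j t_{\alpha,j}t_{\beta,j}^\star=\delta^{\alpha\beta}$. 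Hence $\mathcal D_n(g_{\alpha\beta})\in J_n\otimes\mathbb C[SU_n]_q\otimes\mathbb C[SU_n]_q$, and multiplicativity of $\mathcal D_n$ gives (a).

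For \textbf{(b)} I would compute $\Pi_\varphi(g_{\alpha\beta})$ directly from \eqref{def_Pi_phi} (Lemma \ref{pivarphi}), splitting according to whether the indices $\alpha,\beta$ meet the last row or column. When $\alpha,\beta<n$ only the terms with $j<n$ survive and the factor $q^{-2}$ combines with $q^{2n-\alpha-\beta}$ to $q^{2(n-1)-\alpha-\beta}$, so $\Pi_\varphi(g_{\alpha\beta})$ is exactly the corresponding generator of $J_{n-1}$; the term $g_{nn}$ is sent to $1-1=0$; and any $g_{\alpha\beta}$ with precisely one index equal to $n$ is sent to $0$. Thus $\Pi_\varphi(J_n)\subseteq J_{n-1}$, whence $(\tau\circ\Pi_\varphi)(J_n)\subseteq\tau(J_{n-1})=\{0\}$, which is (b), and the lemma follows.

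The one genuinely non-routine point is the twisted unitarity relation used in step (a): the sum is over the \emph{row} index $a$ with weights $q^{2a}$, not the immediate orthogonality $\sum_j t_{\alpha,j}t_{\beta,j}^\star=\delta^{\alpha\beta}$ that comes straight from the antipode, and the key observation is that it is exactly the image of the latter under $\theta$ — this is what forces the scalar part of $\mathcal D_n(g_{\alpha\beta})$ to cancel. Everything else is bookkeeping of $q$-exponents and the elementary case analysis for $\Pi_\varphi$.
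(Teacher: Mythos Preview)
Your proof is correct. The two key facts you isolate --- (a) $\mathcal D_n(J_n)\subseteq J_n\otimes\mathbb C[SU_n]_q\otimes\mathbb C[SU_n]_q$ and (b) $\Pi_\varphi(J_n)\subseteq J_{n-1}$ --- are exactly the two facts the paper establishes, and you combine them in the same way.

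The difference lies in method. The paper argues structurally: for (b) it produces a $*$-homomorphism $\Psi_\varphi:\mathbb C[U_n]_q\to\mathbb C[U_{n-1}]_q$ making the square $\Phi_{n-1}\circ\Pi_\varphi=\Psi_\varphi\circ\Phi_n$ commute, and since $\ker\Phi_k=J_k$ the inclusion is immediate; for (a) it exhibits an identity expressing $(\Phi_n\otimes\mathrm{id}\otimes\mathrm{id})\circ\mathcal D_n$ in terms of $(\Delta\otimes\mathrm{id})\circ\Delta\circ\Phi_n$ (up to flips and the automorphism $\theta$), so that again $\ker\Phi_n=J_n$ forces the vanishing. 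You instead compute on generators: a direct case check for $\Pi_\varphi(g_{\alpha\beta})$ in (b), and in (a) an explicit expansion of $\mathcal D_n(g_{\alpha\beta})$ using the antipode relation to collapse the middle leg, followed by the twisted unitarity $\sum_a q^{2a-\alpha-\beta}t_{a,\alpha}t_{a,\beta}^\star=\delta^{\alpha\beta}$ obtained by applying $\theta$ to the standard row orthogonality. Your route is more elementary and self-contained (it avoids invoking $\mathbb C[U_n]_q$ and the homomorphisms $\Phi_n$, $\Psi_\varphi$), while the paper's route explains \emph{why} the cancellation happens and makes the role of $\theta$ more transparent. Either way, the crux in (a) is precisely the twisted unitarity relation, and you have identified it correctly.
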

\begin{proof}
Write $\Phi_n: \mathrm{Pol}(\mathrm{Mat}_n)_q\to\mathbb C[U_n]_q$ for the $*$-homomorphism given by
$$\Phi_n:z_j^i\mapsto q^{i-n}z_j^i,\ i,j=1,\ldots, n,$$
(see Theorem \ref{vaksman_hom} and the remark after it). It is straightforward to check that
$$\Phi_{n-1}\circ\Pi_\varphi=\Psi_\varphi\circ\Phi_n,\  \varphi\in [0,2\pi],$$ where $\Psi_\varphi: {\mathbb C}[U_n]_q\to {\mathbb C}[U_{n-1}]_q$ is the $*$-homomorphism such that
$$\Psi_\varphi(z^i_j)=\left\{\begin{array}{ll}z^i_j,&i,j<n,\\e^{i\varphi},& i=j=n,\\ 0,&\text{otherwise}.\end{array}\right.$$
Therefore, as $\ker\Phi_n=J_n$, we have $\Pi_\varphi(J_n)\subset J_{n-1}$, giving that $\tau\circ\Pi_\varphi$ annihilates the ideal $J_n$ whenever $\tau$ is a $*$-representation of $\mathrm{Pol}(\mathrm{Mat}_{n-1})_q$ such that $\tau(J_{n-1})=0$.

Next we observe that if $\Upsilon: \mathbb C[U_n]_q\to\mathbb C[SU_n]_q$ is the canonical $*$-homomorphism  given by
$\Upsilon: z^i_j\mapsto t_{i,j}$, then, for any $a\in\mathrm{Pol}(\mathrm{Mat}_n)_q$,
$$
(\Upsilon\otimes\text{id}\otimes\Upsilon)\circ(\Delta\otimes \text{id})\circ\Delta\circ\Phi_n(a)=(\sigma\otimes \text{id})\circ(\text{id}\otimes\sigma)\circ(\text{id}\otimes\text{id}\otimes\theta)\circ(\Phi_n\otimes \text{id}\otimes\text{id})\circ {\mathcal D}_n(a),
$$
where $\sigma$ is the flip map that sends $a\otimes b$ to $b\otimes a$, $\Delta$ is the comultiplication on $\mathbb C[U_n]_q$ and $\theta: \mathbb C[SU_n]_q\to\mathbb C[SU_n]_q$  is the $*$-automorphism defined by (\ref{auto}).
Hence, $(\Phi_n\otimes\text{id}\otimes\text{id})\circ{\mathcal D}_n(J_n)=0$. It is now immediate that if $\rho$ is a $*$-representation of $\mathrm{Pol}(\mathrm{Mat}_n)_q$ that annihilates $J_n$ , then $(\rho\otimes\pi_1\otimes\pi_2)\circ{\mathcal D}_n(J_n)=0$ and, by the first part of the proof,
 $((\tau\circ\Pi_\varphi)\otimes\pi_1\otimes\pi_2)\circ{\mathcal D}_n(J_n)=0$, for any  $\tau$, $\pi_1$, $\pi_2$ satisfying the assumptions of the lemma.

\end{proof}

The dilation technique will play a crucial role in the rest of the paper. Recall that if $V$ is  a vector space and $\phi_i:V\to B(H_i)$, $i=1,2$, are linear maps then  $\phi_2$ is said to be a {\it dilation} of $\phi_1$ on $V$ and write $\phi_1\prec \phi_2$ if $H_1\subset H_2$  and $\phi_1(a)=P_{H_1}\phi_2(a)|_{H_1}$ for $a\in V$. Clearly, the relation $\prec$ on $V$  is transitive.

The next statement shows that the Fock representation is a dilation of a "sum" of coherent representations when considered as maps on  ${\mathbb C}[\mathrm{Mat}_n]_q$.

\begin{lemma}\label{dilation_fock}
There is a $*$-representation $\Psi$ of $\mathrm{Pol}(\mathrm{Mat}_n)_q$   such that $\Psi$ is a direct integral of  coherent representations $\rho_\varphi^n$  and  $\pi_{F,n}\prec \Psi$ on ${\mathbb C}[\mathrm{Mat}_n]_q$.
\end{lemma}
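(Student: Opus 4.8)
The plan is to build $\Psi$ as a direct integral of the concrete models $\tilde\rho_\varphi^n$ supplied by Lemma \ref{Fock_coherent}, using as the integrating measure the spectral measure of a Sz.-Nagy unitary dilation of a single weighted shift. Recall from Lemma \ref{Fock_coherent} that, up to unitary equivalence, $\pi_{F,n}$ acts on $\mathcal H_0\otimes\ell^2(\mathbb Z_+)$ (with $\mathcal H_0:=\ell^2(\mathbb Z_+^{(n^2-1)})$) by $z_1^1\mapsto B\otimes C_qS+A_{11}\otimes I$ and $z_k^j\mapsto A_{jk}\otimes I$ for $(j,k)\ne(1,1)$, while $\rho_\varphi^n$ acts on $\mathcal H_0$ by $z_1^1\mapsto e^{i\varphi}B+A_{11}$ and $z_k^j\mapsto A_{jk}$. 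The crucial structural point is that on the holomorphic subalgebra $\mathbb C[\mathrm{Mat}_n]_q$ — which is linearly spanned by the monomials in the $z_j^i$ — the last tensor leg is touched only by $z_1^1$, and only through $C_qS$; since operators on different legs commute, expanding each occurrence of $z_1^1$ in a monomial $a$ produces a finite sum $\tilde\pi_{F,n}(a)=\sum_{r\ge 0}C_r\otimes (C_qS)^r$ with coefficients $C_r\in B(\mathcal H_0)$ that are universal words in $B$, $A_{11}$ and the $A_{jk}$; in particular the same coefficients occur if each tensor factor $C_qS$ is replaced by an arbitrary bounded operator $T$, giving $\sum_{r\ge0}C_r\otimes T^r$.

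Next I would dilate $C_qS$. Since $\|C_qS\,e_m\|=\sqrt{1-q^{2m+2}}<1$, the operator $C_qS\in B(\ell^2(\mathbb Z_+))$ is a contraction, so by the Sz.-Nagy unitary power dilation theorem there exist a Hilbert space $\mathcal K\supseteq\ell^2(\mathbb Z_+)$ and a unitary $U\in B(\mathcal K)$ with $(C_qS)^m=P_{\ell^2(\mathbb Z_+)}U^m|_{\ell^2(\mathbb Z_+)}$ for all $m\ge 0$. Applying the spectral theorem to the unitary $U$, identify $\mathcal K=\int_{[0,2\pi)}^{\oplus}\mathcal K_\varphi\,d\mu(\varphi)$ for a Borel measure $\mu$ on $[0,2\pi)$ and a measurable field $(\mathcal K_\varphi)$, so that $U$ becomes multiplication by $e^{i\varphi}$.

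Now define $\Psi:=\int_{[0,2\pi)}^{\oplus}\rho_\varphi^n\,d\mu(\varphi)$, realized on $\mathcal H_0\otimes\mathcal K=\int^{\oplus}(\mathcal H_0\otimes\mathcal K_\varphi)\,d\mu(\varphi)$ via the $\tilde\rho_\varphi^n$-models of Lemma \ref{Fock_coherent}; these depend measurably — indeed polynomially — on $e^{i\varphi}$ and are uniformly bounded, since $A_{jk}$ and $B$ are fixed, so the direct integral makes sense and is a bounded $*$-representation of $\mathrm{Pol}(\mathrm{Mat}_n)_q$, which is by construction a direct integral of coherent representations. Unwinding the direct integral with respect to the spectral decomposition of $U$ gives $\Psi(z_1^1)=B\otimes U+A_{11}\otimes I$ and $\Psi(z_k^j)=A_{jk}\otimes I$; hence, by the expansion of the first paragraph, $\Psi(a)=\sum_{r\ge0}C_r\otimes U^r$ for every monomial $a\in\mathbb C[\mathrm{Mat}_n]_q$, with the same coefficients $C_r$ as for $\tilde\pi_{F,n}(a)$. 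Letting $V:=I_{\mathcal H_0}\otimes\iota:\mathcal H_0\otimes\ell^2(\mathbb Z_+)\to\mathcal H_0\otimes\mathcal K$ be induced by the inclusion $\iota$, the Sz.-Nagy property yields $V^{*}\Psi(a)V=\sum_{r\ge0}C_r\otimes(C_qS)^r=\tilde\pi_{F,n}(a)$ for all monomials, hence by linearity for all $a\in\mathbb C[\mathrm{Mat}_n]_q$. Transporting back along the unitary equivalences of Lemma \ref{Fock_coherent} then gives $\pi_{F,n}\prec\Psi$ on $\mathbb C[\mathrm{Mat}_n]_q$, as required.

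The only genuine input here is Lemma \ref{Fock_coherent}, which isolates the single tensor leg, and the single contraction $C_qS$ inside it, responsible for the difference between the Fock representation and the coherent family on the holomorphic part; once that fact is in hand, the rest is the textbook unitary power dilation combined with the spectral theorem. The point that needs the most care is the bookkeeping showing that the operator coefficients $C_r$ appearing in $\tilde\pi_{F,n}(a)$ and in $\Psi(a)$ literally coincide; this is clear because both arise from the same universal expansion of the monomial $a$ under the substitution $z_1^1\mapsto B\otimes(\cdot)+A_{11}\otimes I$, $z_k^j\mapsto A_{jk}\otimes I$, only the last leg being interpreted differently (as $C_qS$ in one case, as $U$ in the other). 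A minor point to check is the measurability and uniform boundedness of the field $\varphi\mapsto\rho_\varphi^n$, so that the direct integral is legitimate and is itself a bounded $*$-representation; this is immediate from the explicit $\tilde\rho_\varphi^n$-formulas.
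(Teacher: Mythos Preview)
Your proof is correct and follows essentially the same route as the paper's: use Lemma \ref{Fock_coherent} to isolate the single $C_qS$ leg, apply the Sz.-Nagy unitary power dilation to $C_qS$, and read off $\Psi(z_1^1)=B\otimes U+A_{11}\otimes I$, $\Psi(z_k^j)=A_{jk}\otimes I$ as a direct integral of coherent representations via the spectral decomposition of $U$. You spell out more carefully than the paper does both the monomial expansion $\sum_r C_r\otimes(C_qS)^r$ that makes the dilation property transparent and the measurability/boundedness needed for the direct integral, but the underlying argument is the same.
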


\begin{proof} Let $S$ and $C_q$ be the shift and the diagonal operators on $\ell^2(\mathbb Z_+)$ given by (\ref{SC}).
By Lemma \ref{Fock_coherent},
$$\pi_{F,n}(z_1^1)=B\otimes C_qS+A_{11}\otimes 1,\  \pi_{F,n}(z_j^i)=A_{ij}\otimes 1,\  (i,j)\ne(1,1),$$ for some $A_{ij},B\in B(\ell^2(\mathbb Z_+)^{\otimes (n^2-1)})$,
while the images under  the coherent representations are:
$$\rho_\varphi^n(z_1^1)=(-1)^{n-1}e^{i\varphi}B+A_{11}, \ \rho_\varphi^n(z_j^i)=A_{ij}\ (i,j)\ne(1,1).$$ As $C_qS$ is a contraction, by Sz.-Nagy's theorem (see e.g. \cite{paulsen}),  there exists a unitary operator $U$ on a larger space  containing $\ell^2(\mathbb Z_+)$ such that $(C_qS)^k=P_{\ell^2(\mathbb Z_+)}U^k|_{\ell^2(\mathbb Z_+)}$ for all $k\in\mathbb N$.  Let  $$\Psi(z_1^1)=B\otimes U+A_{11}\otimes 1,\ \Psi(z_j^i)=A_{ij}\otimes 1 \text{ if }(i,j)\ne(1,1).$$  Clearly, $\pi_{F,n}\prec \Psi$ on ${\mathbb C}[{\mathrm{Mat}}_n]_q$ and  $\Psi$ is a $*$-representation of $\mathrm{Pol}(\mathrm{Mat}_n)_q$ which is a direct integral of those $\rho_\varphi^n$ such that  $(-1)^{n-1}e^{i\varphi}$ is in the spectrum of $U$.

\end{proof}

\begin{lemma}\label{fock_2}
\begin{enumerate}
\item Any $*$-representation of $\mathrm{Pol}(\mathrm{Mat}_n)_q$ that annihilates the ideal $J_n$ is a direct integral of $*$-representations given by $z_l^k\mapsto e^{i\varphi_k}q^{k-n}\rho(t_{k,l})$, $k,l=1,\ldots,n$,  where  $\rho$ is a $*$-representation of $\mathbb C[SU_n]_q$ and $\varphi_k\in[0, 2\pi)$.
\item If $\pi$ is a $*$-representation of $\mathrm{Pol}(\mathrm{Mat}_n)_q$ such that $\pi(J_n)=0$  then $\|\pi(a)\|\leq\|\pi_{F,n}(a)\|$, $a\in \mathrm{Pol}(\mathrm{Mat}_n)_q$, and
\begin{equation}\label{jq}
\|j_q^{(k)}((\pi_{F,n}(a_{i,j}))_{i,j})\|=\sup\{\|(\pi(a_{i,j}))_{i,j}\|:\pi\in\mathrm{Rep}(\mathrm{Pol}(\mathrm{Mat}_n)_q),\pi(J_n)=0\}
\end{equation}
 for any $k\in\mathbb N$ and  $(a_{i,j})_{i,j}\in M_k(\mathrm{Pol}(\mathrm{Mat}_n)_q)$.
\end{enumerate}
\end{lemma}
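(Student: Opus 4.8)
\textit{Proof plan.} For part (1), the first step is to note that a $*$-representation $\pi$ with $\pi(J_n)=0$ factors as $\pi=\tilde\pi\circ\Phi_n$ through the isomorphism $\mathrm{Pol}(\mathrm{Mat}_n)_q/J_n\cong\mathbb C[U_n]_q$ recalled above, where $\Phi_n:z^i_j\mapsto q^{i-n}z^i_j$ is the $*$-homomorphism from the proof of Lemma \ref{shilov}; so it suffices to decompose $\tilde\pi\in\mathrm{Rep}(\mathbb C[U_n]_q)$. The element $\det_q\mathbf z$ is central in $\mathbb C[U_n]_q$, and a short computation from \eqref{detqz} applied to the rescaling $\iota$ shows it is unitary there. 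Disintegrating over the scalar spectral measure $\nu$ of the unitary $\tilde\pi(\det_q\mathbf z)$ gives $\tilde\pi=\int^{\oplus}\tilde\pi_t\,d\nu(t)$ with $\tilde\pi_t(\det_q\mathbf z)=e^{it}\,\text{id}$ (routine, as $\mathbb C[U_n]_q$ is countably generated). Next I use that $z^i_j\mapsto\mu z^i_j$ extends to a $*$-automorphism $\beta_\mu$ of $\mathbb C[U_n]_q$ for every $\mu\in\mathbb T$ (an immediate check against the relations \eqref{zaa1}--\eqref{zaa3} and \eqref{star_pol}) and that $\beta_\mu(\det_q\mathbf z)=\mu^n\det_q\mathbf z$. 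Choosing a measurable branch $t\mapsto\mu_t$ with $\mu_t^n=e^{-it}$, the representation $\tilde\pi_t\circ\beta_{\mu_t}$ annihilates $\det_q\mathbf z-1$, hence factors through $\mathbb C[U_n]_q/\langle\det_q\mathbf z-1\rangle\cong\mathbb C[SU_n]_q$ as $\rho_t\circ\Upsilon$ with $\Upsilon:z^i_j\mapsto t_{i,j}$ and $\rho_t\in\mathrm{Rep}(\mathbb C[SU_n]_q)$. Unwinding, $\pi_t:=\tilde\pi_t\circ\Phi_n$ satisfies $\pi_t(z^k_l)=e^{i\varphi_k(t)}q^{k-n}\rho_t(t_{k,l})$ with $\varphi_k(t)=-\arg\mu_t$ (the same for all $k$), and $\pi=\int^{\oplus}\pi_t\,d\nu(t)$, which is the assertion of (1).

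For part (2)(a), by (1) it suffices to bound $\|\pi(a)\|$ when $\pi(z^k_l)=e^{i\varphi_k}q^{k-n}\rho(t_{k,l})$ for some $\rho\in\mathrm{Rep}(\mathbb C[SU_n]_q)$. The key point is the identity
$$\pi=(\chi_{\bf\varphi}\otimes\rho\otimes\varepsilon)\circ\mathcal D_n,$$
where $\mathcal D_n$ is the coaction of Lemma \ref{coaction}, $\chi_{\bf\varphi}$ the character of Lemma \ref{character}, and $\varepsilon$ the counit of $\mathbb C[SU_n]_q$: both sides are $*$-homomorphisms, and since $\varepsilon(t_{a,i})=\delta_{ai}$ the sum $\mathcal D_n(z^i_j)=\sum_{a,b}z^a_b\otimes t_{b,j}\otimes t_{a,i}$ collapses under $\chi_{\bf\varphi}\otimes\rho\otimes\varepsilon$ to $e^{i\varphi_i}q^{i-n}\rho(t_{i,j})=\pi(z^i_j)$. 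By Lemma \ref{character} there is a character $\chi'$ of $C_F(\overline{\mathbb D}_n)_q$ with $\chi_{\bf\varphi}=\chi'\circ\pi_{F,n}$, so
$$\pi(a)=(\chi'\otimes\text{id})\bigl((\pi_{F,n}\otimes\rho\otimes\varepsilon)\circ\mathcal D_n(a)\bigr),$$
the argument of $\chi'\otimes\text{id}$ being an operator on $H_{F,n}\otimes H_\rho$ lying in $C_F(\overline{\mathbb D}_n)_q\otimes_{\min}B(H_\rho)$. As $\chi'\otimes\text{id}$ is a $*$-homomorphism, hence contractive, and $\|(\pi_{F,n}\otimes\rho\otimes\varepsilon)\circ\mathcal D_n(a)\|\le\|\pi_{F,n}(a)\|$ by Lemma \ref{fock_m}, we obtain $\|\pi(a)\|\le\|\pi_{F,n}(a)\|$; integrating over $t$ gives (2)(a) for every $\pi$ with $\pi(J_n)=0$.

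For part (2)(b), the inequality $\le$ in \eqref{jq} follows from (2)(a): such a $\pi$ is contractive for $\|\pi_{F,n}(\cdot)\|$ on $\mathrm{Pol}(\mathrm{Mat}_n)_q$, hence extends to a $*$-representation of $C_F(\overline{\mathbb D}_n)_q$ annihilating $\bar J_n$, factors through $C_F(\overline{\mathbb D}_n)_q/\bar J_n$, and therefore $\|(\pi(a_{i,j}))_{i,j}\|\le\|j_q^{(k)}((\pi_{F,n}(a_{i,j}))_{i,j})\|$ for every $k$. For $\ge$: the $\alpha=\beta$ relations generating $J_n$ give $\sum_j q^{2n-2\alpha}z^\alpha_j(z^\alpha_j)^*=1$ in $\mathrm{Pol}(\mathrm{Mat}_n)_q/J_n\cong\mathbb C[U_n]_q$, whence $\|z^\alpha_j\|\le q^{\alpha-n}$ in every $*$-representation, so this $*$-algebra has a finite universal $C^*$-norm, and the right-hand side of \eqref{jq} is the norm of $(\Phi_n(a_{i,j}))_{i,j}$ in $M_k$ over its universal $C^*$-algebra; since the canonical $*$-homomorphism $\mathrm{Pol}(\mathrm{Mat}_n)_q/J_n\to C_F(\overline{\mathbb D}_n)_q/\bar J_n$ has dense range, it extends to a surjection from that universal $C^*$-algebra, which is norm non-increasing at every matrix level, giving $\ge$. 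Equality in \eqref{jq} follows.

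I expect the disintegration in part (1) to be the main obstacle: one has to verify that $\det_q\mathbf z$ is central and unitary in $\mathbb C[U_n]_q$, recall (and correctly normalize) the isomorphism $\mathbb C[U_n]_q/\langle\det_q\mathbf z-1\rangle\cong\mathbb C[SU_n]_q$, and carry out the standard but slightly technical measurable choice of the roots $\mu_t$ and of the field $t\mapsto\rho_t$. Once (1) is in place, parts (2)(a)--(2)(b) are short, the essential input being the identity $\pi=(\chi_{\bf\varphi}\otimes\rho\otimes\varepsilon)\circ\mathcal D_n$, which transfers the bound to Lemmas \ref{character} and \ref{fock_m}.
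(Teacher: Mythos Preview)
Your proposal is correct and follows essentially the same route as the paper: for (1) you disintegrate over the central normal element $\det_q\mathbf z$ and then pass to $\mathbb C[SU_n]_q$ via the quotient by $\langle\det_q\mathbf z-1\rangle$, while the paper does the same computation in the isomorphic $*$-algebra $(\mathbb C[GL_n]_q,\ast)$ (your automorphism $\beta_\mu$ amounts to choosing all $\varphi_k$ equal, a special case of the paper's choice $\varphi_1+\cdots+\varphi_n=\varphi$); for (2) both arguments hinge on the identity $\pi=(\chi_{\bf\varphi}\otimes\rho\otimes\varepsilon)\circ\mathcal D_n$ together with Lemmas \ref{character} and \ref{fock_m}, and your slightly more explicit justification of \eqref{jq} via the universal $C^*$-norm on $\mathrm{Pol}(\mathrm{Mat}_n)_q/J_n$ is equivalent to the paper's one-line remark.
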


\begin{proof} 1. Let $\pi$ be a $*$-representation of $\mathrm{Pol}(\mathrm{Mat}_n)_q$ that annihilates $J_n$. By \cite[Proposition 6.1, Theorem 2.2]{vaksman_shilov1}, the family $\{\pi(z_j^i):i,j=1,\ldots, n\}$ determines a $*$-representation of $(\mathbb C[GL_n]_q,*)$. Since $\det_q\mathbf{z}$ is central in  $(\mathbb C[GL_n]_q,*)$ and  satisfies $\det_q\mathbf{z}(\det_q\mathbf{z})^*=q^{-n(n-1)}$ (see (\ref{detqz})),
by the spectral theorem
$$\pi(\det\nolimits_q\mathbf{z})=\int_{[0,2\pi]}^{\oplus}e^{i\varphi}q^{-n(n-1)/2}I_\varphi d\mu(\varphi)$$  and
$$\pi=\int_{[0,2\pi]}^{\oplus}\pi_\varphi d\mu(\varphi),$$
where $\{\pi_\varphi:\varphi\in[0,2\pi)\}$ is a field of $*$-representations of $\mathrm{Pol}(\mathrm{Mat}_n)_q$ such that $\pi_\varphi(\det_q\mathbf{z})=e^{i\varphi}q^{-n(n-1)/2}I_\varphi$ with $I_\varphi$ being the identity operator on the representation space of $\pi_\varphi$.

Fix now $\varphi\in[0,2\pi)$ and let $\iota:(\mathbb C[GL_n]_q,\ast )\to\mathbb C[U_n]_q$ be the $*$-isomorphism given by $\iota(z_l^k)=e^{i\varphi_k}q^{k-n}z_l^k$, $k,l=1,\ldots, n$, where $\varphi_1+\ldots+\varphi_n=\varphi$.
Then $\pi_\varphi\circ\iota^{-1}$ is a $*$-representation of $\mathbb C[U_n]_q$ such that $\pi_\varphi\circ\iota^{-1}(\det_q\mathbf{z})=I$ and hence
$\rho_\varphi:=\pi_\varphi\circ\iota^{-1}\circ j^{-1}$ is a $*$-representation of $\mathbb C[SU_n]_q\simeq^j \mathbb C[U_n]_q/\langle\det_q\mathbf{z}-1\rangle$, where $j(z_l^k)=t_{k,l}$, $k,l=1,\ldots,n$. We obtain
$$\pi_\varphi(z_l^k)=\rho_\varphi\circ j\circ \iota(z_l^k)=e^{i\varphi_k}q^{k-n}\rho_\varphi(t_{k,l}), k,l=1,\ldots,n.$$

2. Let $\chi_{\bf\varphi}$, $\varphi=(\varphi_1,\ldots,\varphi_n)$, be the one dimensional representation of $\mathrm{Pol}(\mathrm{Mat}_n)_q$ defined in Lemma \ref{character}. By the lemma the mapping $\psi:\pi_{F,n}(a)\mapsto \chi_{\bf \varphi}(a)$, $a\in\mathrm{Pol}(\mathrm{Mat}_n)_q$ extends to a $*$-homomorphism from $C_F(\overline{\mathbb{D}}_n)_q$ to $\mathbb C$.
Given  representations $\pi_1$, $\pi_2$ of $\mathbb{C}[SU_n]_q$ and $a\in \mathrm{Pol}(\mathrm{Mat}_n)_q$, by Lemma \ref{fock_m}, we obtain
\begin{eqnarray}\label{ineq}
\|(\chi_{\bf \varphi}\otimes\pi_1\otimes\pi_2)\circ {\mathcal D}_n(a)\|&=&\|(\psi\otimes\mathrm{id}\otimes\mathrm{id})((\pi_{F,n}\otimes\pi_1\otimes\pi_2)\circ {\mathcal D}_n(a))\|\\
&\leq& \|(\pi_{F,n}\otimes\pi_1\otimes\pi_2)\circ {\mathcal D}_n(a)\|\leq
\|\pi_{F,n}(a)\|.\nonumber
\end{eqnarray}

If $\pi_2$ is the one-dimensional representation given by $\pi_2(t_{k,l})=\delta_{kl}$, $k,l=1,\ldots,n$, then
$$(\chi_{\bf\varphi}\otimes\pi_1\otimes\pi_2)\circ \mathcal D_n(z_l^k)=\sum_{a,b=1}^n\chi_{\bf\varphi}(z_b^a)\otimes\pi_1(t_{b,l})\otimes\pi_2(t_{a,k})=e^{i\varphi_k}q^{k-n}\pi_1(t_{k,l}).$$
Hence, applying the first statement of the lemma and (\ref{ineq}) we get
$$\|\pi(a)\|\leq\|\pi_{F,n}(a)\|,\ a\in \mathrm{Pol}(\mathrm{Mat}_n)_q$$
for any $*$-representation $\pi$ of $\mathrm{Pol}(\mathrm{Mat}_n)_q$ such that $\pi(J_n)=0$.

 The equality (\ref{jq}) holds by  the fact that $\|j_q^{(k)}((\pi_{F,n}(a_{i,j}))_{i,j})\|$ is the supremum over  $\|(\pi(a_{i,j})_{i,j}\|$, where $\pi$ runs over all $*$-representations of $\mathrm{Pol}(\mathrm{Mat}_n)_q$ that annihilate the ideal $J_n$ and such that $\|\pi(x)\|\leq\|\pi_{F,n}(x)\|$, $x\in \mathrm{Pol}(\mathrm{Mat}_n)_q$.
 \end{proof}

\medskip
We are now in a position to prove the main theorems.

\medskip

{\bf Proof of Theorem \ref{boundary_ideal}.}

Since $j_q$ is a $*$-homomorphism between $C^*$-algebras, it is a complete contraction. To see that $j_q$ is a complete isometry when restricted to $A(\mathbb D_n)_q$
it is enough to prove that
$$||(\pi_{F,n}(a_{i,j}))_{i,j}||\leq ||j_q^{(k)}((\pi_{F,n}(a_{i,j}))_{i,j})||$$
for any $(a_{i,j})_{i,j}\in  M_k(A(\mathbb D_n)_q)$.

The strategy is to find a $*$-representation $\Pi:\mathrm{Pol}(\mathrm{Mat}_n)_q\to B(K)$, $K\supset H_{F,n}$ such that $\Pi(J_n)=0$ and
\begin{equation}\label{dilation}
\pi_{F,n}(a)=P_{H_{F,n}}\Pi(a)|_{H_{F,n}}, a\in \mathbb C[{\rm Mat}_n]_q,
\end{equation}
i.e. $\pi_{F,n}\prec \Pi$ when considered as maps on $\mathbb C[{\rm Mat}_n]_q$. As in this case
$$\|(\pi_{F,n}(a_{i,j}))_{i,j}\|\leq \|(\Pi(a_{i,j}))_{i,j}\|, (a_{i,j})_{i,j}\in M_k(\mathbb C[{\rm Mat}_n]_q), k\in\mathbb N,$$
Lemma  \ref{fock_2} would give immediately the statement.

 We proceed by induction. If $n=1$ this was proved in \cite{vaksman-boundary}. For the reader's convenience we reproduce the arguments. We have that $\mathrm{Pol}(\mathbb C)_q$ is generated by a single element $z$ subject to the relation $z^*z=q^2zz^*+(1-q^2)$ and $\pi_{F,1}(z)=C_qS$, a contraction (see e.g. the proof of Lemma \ref{character}). By Sz.-Nagy's theorem there exist a Hilbert space $K\supset H_{F,1}$ and a unitary operator $U\in B(K)$ such that $\pi_{F,1}(p(z))=P_{H_{F,1}}p(U)|_{H_{F,1}}$ for all holomorphic polynomials $p$. As the map $z\mapsto U$ extends to a  $*$-representation of ${\rm Pol}(\mathbb C)_q$ that annihilates the ideal $J_1=\langle zz^*-1\rangle$, we obtain  a necessary dilation.

  Assume (\ref{dilation})  holds for some $n\geq 1$.
By Lemma \ref{dilation_fock} there exists a $*$-representation $\Psi$ of $\mathrm{Pol}(\mathrm{Mat}_{n+1})_q$, which is a direct integral of coherent representations $\rho_{\varphi}^{n+1}$ and such that  $\pi_{F,n+1}\prec \Psi$ on  $\mathbb C[{\rm Mat}_{n+1}]_q$.

By the remark after Lemma \ref{coherent}, $\rho_\varphi^{n+1}$ is a $*$-subrepresentation of $\tau_{\varphi,w,w}$ and hence $\rho_\varphi^{n+1}(a)=P_L\tau_{\varphi,w,w}(a)|_L$, $a\in \mathrm{Pol}(\mathrm{Mat}_{n+1})_q$,  for a subspace $L$.

As $\tau_{\varphi,w,w}=((\pi_{F,n}\circ\Pi_\varphi)\otimes\pi_w\otimes\pi_w)\circ {\mathcal D}_{n+1}$ and by induction $\pi_{F,n}\prec\Pi$ on  $\mathbb C[{\rm Mat}_n]_q$ with $\Pi(J_n)=0$
we obtain  $$\tau_{\varphi,w,w}\prec((\Pi\circ\Pi_\varphi) \otimes\pi_w\otimes\pi_w)\circ {\mathcal D}_{n+1} \text { on } \mathbb C[{\rm Mat}_{n+1}]_q.$$ Finally,  by Lemma \ref{shilov}
$((\Pi\circ\Pi_\varphi) \otimes\pi_w\otimes\pi_w)\circ{\mathcal D}_{n+1}$ annihilates the ideal $J_{n+1}$  of $\mathrm{Pol}(\mathrm{Mat}_{n+1})_q$.

Combining all these steps and using transitivity of the relation $\prec$  we obtain the desired statement.
\qed

\medskip

{\bf Proof of Theorem \ref{shilov_boundary}.}

Assume that $I$ is a boundary ideal for $ A(\mathbb D_n)_q$ with $I\supset \bar J_n$ and  identify $\mathrm{Pol}({\rm Mat}_n)_q$ with its image under the Fock representation.
Let $a\in \mathbb C[{\rm Mat}_n]_q\subset{\rm Pol}({\rm Mat}_n)_q$ be a polynomial in ``holomorphic" generators $z_j^i$, $1\leq i,j\leq n$.

By assumption,
\begin{equation}\label{idealij}
\|a+I\|=\|a\|=\|a+\bar J_n\|.
\end{equation}
Let $x\in {\rm Pol}({\rm Mat}_n)_q$.
  As,  by  (\ref{star_pol}) and  (\ref{detqz}), $$(z_j^i)^*+J_n=(-q)^{i+j-2n}(\det\nolimits_q{\bf z})^{-1}\det\nolimits_q{\bf z}_j^i+J_n,$$ $\det_q{\bf z}+J_n$ is a central element in $\mathrm{Pol}(\mathrm{Mat}_n)_q/J_n$ and $$(\det\nolimits_q{\bf z})^*\det\nolimits_q{\bf z}+J_n=q^{-n(n-1)}+J_n,$$ there exist $k\in\mathbb Z_+$ and $a\in\mathbb C[{\rm Mat}_n]_q$ such that $x+\bar J_n=(\det_q{\bf z})^{-k}a+\bar J_n$. Hence, by (\ref{idealij}) and the fact that $I\supset\bar J_n$, we obtain
\begin{eqnarray*}
\|x+\bar J_n\|^2&=&\|(x^*+\bar J_n)(x+\bar J_n)\|\\
&=&\|a^*((\det\nolimits_q{\bf z})^{-k})^*(\det\nolimits_{q}{\bf z})^{-k}a+\bar J_n\|\\
&=&\|a^*q^{kn(n-1)}a+\bar J_n\|=\|q^{kn(n-1)/2}a+\bar J_n\|^2\\
&=&\|q^{kn(n-1)/2}a+I\|^2=\|a^*q^{kn(n-1)}a+I\|\\
&=&\|a^*((\det\nolimits_q{\bf z})^{-k})^*(\det\nolimits_q{\bf z})^{-k}a+I\|\\
&=&\|x+I\|^2.
\end{eqnarray*}
This implies that $C_F(\overline{\mathbb D}_n)_q/\bar J_n=C_F(\overline{\mathbb D}_n)_q/I$ and hence $\bar J_n=I$. \qed

\medskip

Let $C(U_n)_q$ be the $C^*$-enveloping algebra of ${\mathbb C}[U_n]_q$. As the matrix $(z^i_j)_{i,j}$ formed by the generators of ${\mathbb C}[U_n]_q$ is unitary one has that the norm of each generator is not larger than 1  in each $*$-representation by bounded operators on a Hilbert space and hence the $C^*$-enveloping algebra is well-defined.
\begin{cor}
The $C^*$-envelope $C_e^*(A(\mathbb D_n)_q)$ is isomorphic to $C(U_n)_q$.
\end{cor}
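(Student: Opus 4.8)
The plan is to read the statement off from Theorem~\ref{shilov_boundary} together with the correspondence between Shilov boundary ideals and $C^*$-envelopes recalled at the beginning of this section, and then to identify the quotient $C^*$-algebra $C_F(\overline{\mathbb D}_n)_q/\bar J_n$ with $C(U_n)_q$. By Theorem~\ref{shilov_boundary}, $\bar J_n$ is the Shilov boundary ideal for the unital subalgebra $A(\mathbb D_n)_q$ of $C_F(\overline{\mathbb D}_n)_q=C^*(A(\mathbb D_n)_q)$; hence, as recalled above, $C_F(\overline{\mathbb D}_n)_q/\bar J_n$ (with the restriction of $j_q$ to $A(\mathbb D_n)_q$) is a realization of $C^*_e(A(\mathbb D_n)_q)$. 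So it suffices to produce a $*$-isomorphism $C_F(\overline{\mathbb D}_n)_q/\bar J_n\cong C(U_n)_q$.

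To do this, I would consider the $*$-homomorphism $j_q\circ\pi_{F,n}\colon\mathrm{Pol}(\mathrm{Mat}_n)_q\to C_F(\overline{\mathbb D}_n)_q/\bar J_n$. Since $J_n\subset\bar J_n$, it annihilates $J_n$ and therefore factors through the quotient $*$-algebra $\mathrm{Pol}(\mathrm{Mat}_n)_q/J_n=\mathrm{Pol}(S(\mathbb D_n))_q$, which by Vaksman's $*$-isomorphism $\Psi$ and the map $\iota$ of Section~\ref{algebra} is $*$-isomorphic to $\mathbb C[U_n]_q$; write $\bar\jmath\colon\mathbb C[U_n]_q\to C_F(\overline{\mathbb D}_n)_q/\bar J_n$ for the induced $*$-homomorphism. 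As $\pi_{F,n}(\mathrm{Pol}(\mathrm{Mat}_n)_q)$ is dense in $C_F(\overline{\mathbb D}_n)_q$ and $j_q$ is surjective, $\bar\jmath$ has dense range. Now $*$-representations of $\mathrm{Pol}(\mathrm{Mat}_n)_q$ annihilating $J_n$ are precisely those factoring through $\mathbb C[U_n]_q$, so for $a\in\mathrm{Pol}(\mathrm{Mat}_n)_q$ with image $\bar a\in\mathbb C[U_n]_q$ the universal $C^*$-norm of $\bar a$ (which is finite because the matrix of generators of $\mathbb C[U_n]_q$ is unitary, as noted before the corollary) equals $\sup\{\|\pi(a)\|:\pi\in\mathrm{Rep}(\mathrm{Pol}(\mathrm{Mat}_n)_q),\ \pi(J_n)=0\}$; by the case $k=1$ of (\ref{jq}) in Lemma~\ref{fock_2}, this supremum is $\|j_q(\pi_{F,n}(a))\|=\|\bar\jmath(\bar a)\|$. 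Thus $\bar\jmath$ is isometric for the universal norm, hence extends to an isometric $*$-homomorphism $C(U_n)_q\to C_F(\overline{\mathbb D}_n)_q/\bar J_n$ with dense (hence closed, hence full) range, i.e.\ a $*$-isomorphism. Combined with the first step this gives $C^*_e(A(\mathbb D_n)_q)\cong C(U_n)_q$.

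The only step that is more than a formality is the norm identification: a priori the quotient $C_F(\overline{\mathbb D}_n)_q/\bar J_n$ could carry a $C^*$-norm strictly smaller than the universal one on $\mathbb C[U_n]_q$, in which case $\bar\jmath$ would not be injective. What rescues this is exactly the inequality $\|\pi(a)\|\le\|\pi_{F,n}(a)\|$ for every $*$-representation $\pi$ with $\pi(J_n)=0$ established in Lemma~\ref{fock_2}(2), which forces the supremum defining the universal norm to be attained within the norm coming from $C_F(\overline{\mathbb D}_n)_q/\bar J_n$ itself, so the two norms coincide. Everything else is quoted directly: Theorem~\ref{shilov_boundary}, the correspondence between the Shilov boundary ideal and the $C^*$-envelope, and Vaksman's $*$-isomorphism $\mathrm{Pol}(S(\mathbb D_n))_q\cong\mathbb C[U_n]_q$.
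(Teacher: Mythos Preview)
Your proof is correct and follows essentially the same approach as the paper's: both reduce to identifying $C_F(\overline{\mathbb D}_n)_q/\bar J_n$ with $C(U_n)_q$ via the Vaksman isomorphism $\mathrm{Pol}(\mathrm{Mat}_n)_q/J_n\cong\mathbb C[U_n]_q$, with Lemma~\ref{fock_2}(2) supplying the norm control. The only cosmetic difference is direction: the paper builds a surjection $C_F(\overline{\mathbb D}_n)_q\to C(U_n)_q$ and checks its kernel is $\bar J_n$, whereas you build an isometry $C(U_n)_q\to C_F(\overline{\mathbb D}_n)_q/\bar J_n$ directly from the $k=1$ case of (\ref{jq}); both arguments are equivalent.
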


\begin{proof} Let $\psi:{\rm Pol}({\rm Mat}_n)_q\to ({\mathbb C}[GL_n]_q,\ast)$ be the surjective $*$-homomorphism from Theorem \ref{vaksman_hom} and
$\iota:({\mathbb C}[GL_n]_q,\ast )\to{\mathbb C}[U_n]_q$ the $*$-isomorphism given by $\iota(z_l^k)=q^{k-n}z_l^k$, $k,l=1,\ldots, n$.
As $\ker\psi=J_n$, any $*$-representation of ${\rm Pol}({\rm Mat}_n)_q$ such that $\pi(J_n)=0$ is given by $\pi(a)=\rho(\iota\circ\psi(a))$, $a\in {\rm Pol}({\rm Mat}_n)_q,$ for some $*$-representation $\rho$ of ${\mathbb C}[U_n]_q$. Moreover, the correspondence $\pi\leftrightarrow\rho$ is one-to-one.
Consider a $*$-representation $\rho$ of $\mathbb C[U_n]_q$ such that $\overline{\rho({\mathbb C}[U_n]_q)}\simeq C(U_n)_q$. In what follows we identify the latter two algebras. Let
$\Psi:\pi_{F,n}({\rm Pol}({\rm Mat}_n)_q)\to C(U_n)_q$, $\pi_{F,n}(a)\mapsto\rho(\iota\circ\psi(a))$. By Lemma \ref{fock_2},
$$\|\rho(\iota\circ\psi(a))\|\leq\|\pi_{F,n}(a)\|, a\in {\rm Pol}({\rm Mat}_n)_q$$
and hence $\Psi$ extends to a surjective $*$-homomorphism (denoted by the same letter) from $C_F(\overline{\mathbb D}_n)_q$ to $C(U_n)_q$. As any representation of $C(U_n)_q$ gives rise to a representation of $C_F(\overline{\mathbb D}_n)_q$ that annihilates the ideal $J_n$, we have $\ker\Psi=\bar J_n$ and hence
$$C_F(\overline{\mathbb D}_n)_q/\bar J_n\simeq C(U_n)_q.$$
As $\bar J_n$ is the Shilov boundary ideal of $A(\mathbb D_n)_q$, $C_F(\overline{\mathbb D}_n)_q/\bar J_n\simeq  C_e^*(A(\mathbb D_n)_q)$ giving the statement.
\end{proof}

\vspace{0.5cm}

\noindent {\bf Acknowledgements. } We would like to thank Daniil Proskurin for valuable conversations during the preparation of this paper. We are grateful to the referee for a number of suggestions that improved the exposition.

\end{document}